\documentclass[microtype]{gtpart}
\usepackage{tocloft}
\usepackage{mathtools}
\usepackage[all,cmtip]{xy}

\usepackage{hyperref}
\hypersetup{
colorlinks = true,
linkcolor = {blue},
urlcolor = {gray},
citecolor = {blue}
}
\usepackage[scr=rsfs]{mathalpha}
\usepackage{tikz}

\newcommand{\Rect}{\mathbf{R}}

\newcommand{\Mass}{\mathbb{M}}
\newcommand{\N}{\mathbb{N}}

\newcommand{\dist}{\mathrm{dist}}

\newcommand{\spt}{\mathrm{spt}\hspace{0.01cm}}

\newcommand{\eps}{\varepsilon}
\newcommand{\dV}{d_V\kern-1pt}
\newcommand{\dW}{d_W\kern-1pt}

\newcommand\res{\mathop{\hbox{\vrule height 7pt width .3pt depth 0pt
\vrule height .3pt width 5pt depth 0pt}}\nolimits}
\newcommand{\modk}{_{_{\mathbb{Z}_k}}\hspace{-0.1cm}}

\newcommand{\Mcal}{\mathcal{M}}

\newcommand{\Mbb}{\mathbb{M}}

\newcommand{\sing}{\mathrm{sing}}

\title{$\mathbb{Z}_k$-smooth approximation of mod $k$ integral cycles}

\author{Gianmarco Caldini and Anna Skorobogatova}
\givenname{}
\surname{}
\address{}
\email{}
\urladdr{}

\newtheorem{thm}{Theorem}[section]    
\newtheorem{corollary}[thm]{Corollary}
\newtheorem{pro}[thm]{Proposition}    
\newtheorem{lem}[thm]{Lemma}          
\theoremstyle{definition}
\newtheorem{definition}[thm]{Definition}    
\newtheorem{question}{Question}
\theoremstyle{definition}

\theoremstyle{remark}
\newtheorem{remark}[thm]{Remark}             
\newtheorem{example}[thm]{Example} 
\theoremstyle{definition}
\newtheorem{assumptions}[thm]{Assumptions}    

\usepackage{stmaryrd}

\begin{document}

\begin{abstract}  
We prove that every mod $k$ integral cycle $T$ of sufficiently high codimension in an oriented Riemannian manifold $\mathcal{M}$ can be approximated in flat norm by a cycle which is a $\mathbb{Z}_k$-smooth submanifold $\Sigma$ of nearly the same area, up to a singular set of codimension 5. If in addition $T$ belongs to a mod $k$ homology class admitting an embedded $\Z_k$-smooth manifold representative, then $\Sigma$ can be taken fully regular, up to the codimension one stratum of classical singularities. This extends the smooth approximation theory developed in \cite{ABCD, nonorientato} to cycles with coefficients in $\Z/k\Z$ in stable regimes. Moreover, we provide examples showing the impossibility of locally approximating minimizing cycles by smooth or $\mathbb{Z}_k$-smooth submanifolds, even in homology classes with smooth or $\mathbb{Z}_k$-smooth representatives.
\end{abstract}

\maketitle
\vspace{-1cm}
\setlength{\cftbeforesecskip}{7pt}
\tableofcontents


\section{Introduction}

\subsection{Motivation and main theorem}

In this article we are interested in developing the analog of the smooth approximation theory for integral and mod 2 cycles initiated in \cite{ABCD, nonorientato}, for cycles with coefficients in any group $\mathbb{Z}_k:=\Z/k\Z$ with $k \geq 3$, and in high codimension. Moreover, we provide examples of integral and mod $k$ minimizing cycles (in homology classes with smooth and $\mathbb{Z}_k$-smooth submanifolds representatives) which cannot be locally approximated by smooth and $\mathbb{Z}_k$-smooth submanifolds respectively\footnote{Without expressing any judgment, we remark that all new ideas and results in this article were developed before the ``technology shock'' given by LLM models and did not benefit from them in any possible way, \emph{cf.} \cite[Chapter 4]{tesi}. We decided to keep them separate from any possible further development, see Remark \ref{r:unstable}; we though believe, as of today, much more in the value of what we have not yet fully understood (or formalized), rather than just in the end of the story.}.

In dealing with homology classes with coefficients in $\Z_k$, the first natural question is to identify which object plays the role of a smooth manifold in such an environment. With the aim of obtaining profound results in surgery theory, Morgan and Sullivan developed in the early 70s a remarkable theory showing that the appropriate concept of smooth manifold in the $\mathbb{Z}_k$-coefficient setting is the geometric object they called \emph{$\mathbb{Z}_k$-manifold}, \emph{cf.} \cite{SullivanHaup, MorSul, MITnotes}.

A \emph{$\mathbb{Z}_k$-manifold} (or \emph{$\mathbb{Z}_k$-smooth manifold})  can be roughly defined as space obtained from an oriented smooth manifold $M$ with boundary identified with $k$ disjoint copies of an oriented smooth manifold $\delta M$, called the \emph{Bockstein of $M$}. Gluing these $k$ boundary components together gives a singular space which carries a fundamental $\mathbb{Z}_k$-homology class and the image of the Bockstein operation of this class for the coefficient sequence $$0 \rightarrow \mathbb{Z} \rightarrow \mathbb{Z} \rightarrow \mathbb{Z}_k \rightarrow 0$$ is indeed the integral class $[\delta M]$; in short, a $\mathbb{Z}_k$-manifold can be just thought of as a space which is nonsingular outside a codimension one submanifold where $k$ sheets join together, see Section \ref{s:Zkmanifolds} for the precise definition.

\begin{figure}[h]
    \centering
    \includegraphics[width=0.4\textwidth]{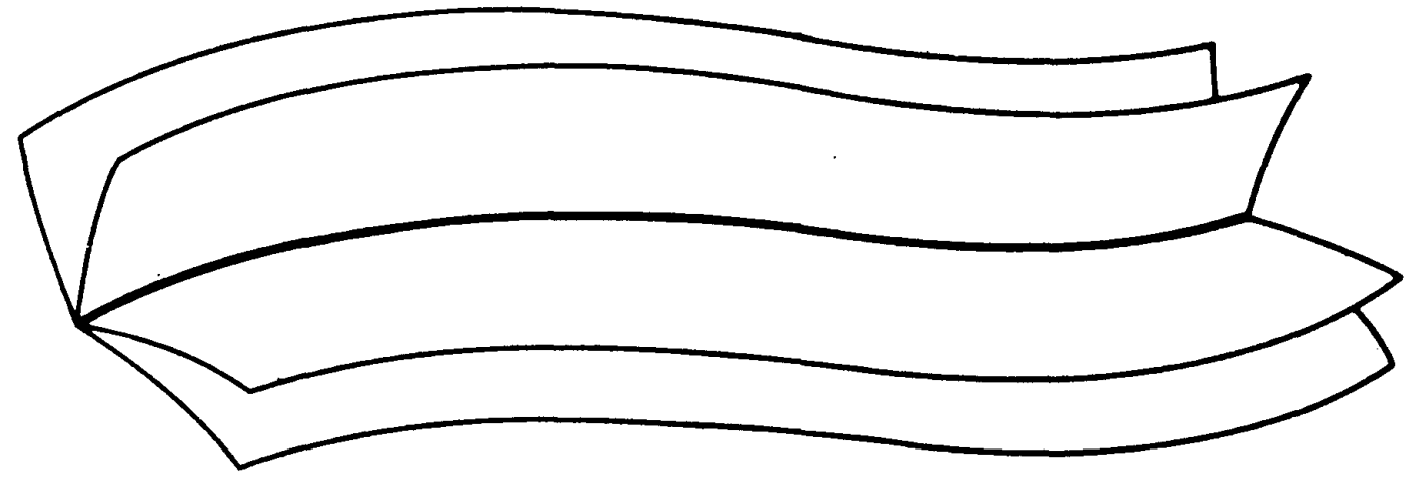}
    \caption{Part of a $\mathbb{Z}_4$-manifold, \emph{cf.} \cite[page 469]{MorSul}.}
\end{figure}

Much like the situation in the integral setting for smooth oriented manifolds, $\mathbb{Z}_k$-manifolds are not general enough to serve as a geometric model for $\mathbb{Z}_k$-homology when $k$ is odd: there exist mod $k$ classes which do not admit any $\mathbb{Z}_k$-manifold representative. For example, the generator of $H_8(K(\mathbb{Z},3), \mathbb{Z}_3)$ cannot be represented by the continuous image of a $\mathbb{Z}_3$-manifold because of a topological obstruction, \emph{cf.} \cite[page 78]{SullivanHaup}\footnote{It is also possible to find a 6-dimensional homology class which cannot be represented by a smooth $\mathbb{Z}_3$-manifold, see Section \ref{s:sharpness} and \cite{Brumfiel}.}. 

An immediate corollary of such an example is that it is not always possible to weakly approximate all integral cycles mod $k$ with a cycle induced by a $\mathbb{Z}_k$-smooth submanifold. This in addition shows that there are fundamental topological obstructions to obtaining generic regularity of minimizers away from the codimension 1 stratum of the singular set in $\mathbb{Z}_k$-homology classes, which is also analogous to the integral setting (\emph{cf.} Thom's example of a 7-dimensional integral homology class with no smooth representatives)\footnote{We refer to \cite{LiuZ, LiuZ2} for more detailed analyses on generic regularity for minimizing integral and mod 2 cycles.}. It is hence clear that, disregarding the codimension 1 stratum, the topological singularities of $\mathbb{Z}_k$-manifolds morally mimick those of oriented smooth manifolds; and indeed, this analogy persists for codimension one singularities of manifolds satisfying variational assumptions, as carefully described in \cite[Introduction \& Main Results]{MinterStanbury}.

This introduction motivates the following analytic and topological questions in the theory of integral currents mod an integer. 

\begin{question}\label{q:smooth-appr-when-smooth-rep}
Suppose $\tau$ is a $\mathbb{Z}_k$-homology class (of a smooth oriented manifold) admitting a smooth embedded $\mathbb{Z}_k$-manifold representative; is it possible to approximate any integral cycle mod $k$ in $\tau$ by a cycle induced by a $\mathbb{Z}_k$-smooth manifold?
\end{question}

\begin{question}\label{q:partial-smooth-appr}
More generally, what is the optimal partial regularity that a $\mathbb{Z}_k$-manifold must have in order to perform such an approximation without any additional assumption on its $\mathbb{Z}_k$-homology class?
\end{question}

The following theorem deals with such questions in high codimension, providing a positive answer to Question \ref{q:smooth-appr-when-smooth-rep} and -- in analogy with the integral setting \cite{ABCD} -- a codimension 5 singular set estimate for Question \ref{q:partial-smooth-appr}. In addition, in Section \ref{s:setwise} we provide examples of both integral and mod $k$ minimizing cycles (in homology classes with smooth and $\mathbb{Z}_k$-smooth submanifolds representatives) which cannot be locally approximated by smooth and $\mathbb{Z}_k$-smooth submanifolds respectively.

\begin{thm}\label{t:1}
Let $\mathcal{M}$ be a connected smooth closed oriented Riemannian manifold of dimension $m+n$ and assume $n\geq \min\{5,m+1\}$. Let $\varepsilon > 0$, $\tau$ an $m$-dimensional mod $k$ homology class in $H_{m}(\mathcal{M}, \mathbb{Z}_k)$, where $k\geq2$ is a natural number, and $T$ an integral cycle mod $k$ representing $\tau$. Then, there is a smooth triangulation $\mathcal{K}$ of $\mathcal{M}$ and an $m$-dimensional $\mathbb{Z}_k$-smooth submanifold $\Sigma$ of $\mathcal{M}\setminus \mathcal{K}^{m-5}$ (where $\mathcal{K}^{m-5}$ is the $m-5$-skeleton of $\mathcal{K}$) with the following properties.
\begin{enumerate}
\item The $m$-dimensional volume of $\Sigma$ does not exceed the mass of $T$ by more than $\varepsilon$, that is $\mathcal{H}^{m}(\Sigma) \leq \mathbb{M}(T) + \varepsilon$.
\item The mod $k$ integral cycle $\llbracket \Sigma \rrbracket\modk$ induced by $\Sigma$ is homologous to $T$ and there is a mod $k$ $m+1$-dimensional integral current $S$ in $\mathcal{M}$ such that $\partial S=\llbracket \Sigma \rrbracket\modk - T$ and $\mathbb{M}(S)< \varepsilon$.
\item If $\tau$ admits an embedded $\mathbb{Z}_k$-smooth manifold representative, then $\Sigma$ can be chosen to be a $\mathbb{Z}_k$-submanifold of $\mathcal{M}$.
\end{enumerate}
\end{thm}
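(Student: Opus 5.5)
The plan follows the scheme of \cite{ABCD, nonorientato}: first reduce to a polyhedral chain, then resolve its singularities skeleton by skeleton, and use the high codimension to remove self-intersections by general position.

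\textbf{Step 1: polyhedral approximation.} By the mod $k$ version of the Federer--Fleming deformation theorem, we approximate $T$ in flat norm mod $k$ by a polyhedral cycle mod $k$, $P=\sum_i a_i\llbracket\sigma_i\rrbracket$. Here the $\sigma_i$ are $m$-simplices of a fine smooth triangulation $\mathcal{K}$ and the $a_i\in\{-\lfloor k/2\rfloor,\dots,\lfloor k/2\rfloor\}$. We want $\mathbb{M}(P)\le\mathbb{M}(T)+\varepsilon/2$ and $P-T=\partial S_1+kR$ with $\mathbb{M}(S_1)<\varepsilon/2$. The mass estimate comes from first approximating $T$ by a representative with nearly minimal mass in its class, via strong approximation.

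\textbf{Step 2: local resolution.} The condition $\partial P=0$ mod $k$ says the following. Around each $(m-1)$-simplex $\rho$, the signed sum of the multiplicities $a_i$ of the adjacent simplices is a multiple of $k$, say $\ell_\rho k$. We replace each $\sigma_i$ by $|a_i|$ parallel, slightly displaced copies, using the normal directions ($n\ge 1$). Near $\rho$ we pair up incoming and outgoing sheets, gluing them by smooth necks as in \cite{ABCD}. The $|\ell_\rho|k$ unmatched sheets are grouped into $|\ell_\rho|$ bundles of $k$ sheets, each meeting along a copy of $\rho$. This produces exactly the codimension-one $\mathbb{Z}_k$-singularity, with the Bockstein locally a union of displaced copies of $\rho$. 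We then proceed inductively down the skeleta $\mathcal{K}^{m-2},\mathcal{K}^{m-3},\dots$. In the link of a simplex of codimension $j$ we see a $\mathbb{Z}_k$-manifold of dimension $j-1$ in a sphere $S^{j-1+n}$. We want to replace the cone over it by a $\mathbb{Z}_k$-manifold filling of small mass, which scales like $r^m$. Such a filling exists exactly when the link is $\mathbb{Z}_k$-null-bordant. The oriented bordism groups $\Omega_{j-1}$ and the $\mathbb{Z}_k$-bordism groups (via the Morgan--Sullivan exact sequence $\Omega_*\xrightarrow{k}\Omega_*\to\Omega_*(\mathbb{Z}_k)\to\Omega_{*-1}$) vanish, or at least contain no obstruction, for $j-1\le 3$: we have $\Omega_0=\mathbb{Z}$, which is handled by matching, and $\Omega_1=\Omega_2=\Omega_3=0$. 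This allows desingularization of all strata of codimension at most $4$. The process stops at $\mathcal{K}^{m-5}$, because $\Omega_4=\mathbb{Z}$ (the signature) obstructs.

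\textbf{Step 3: embeddedness and the estimates.} Immersion of the fillings is upgraded to embedding by general position. This uses the hypothesis $n\ge\min\{5,m+1\}$: either the codimension exceeds the dimension, so generic maps are embeddings, or the links are embedded in spheres of dimension $\ge j-1+5$, where Whitney-type arguments isotope the bordisms to embeddings. The added necks and fillings have mass $O(\delta)$ when the displacement scale $\delta$ is small compared with the mesh. This gives (1) after shrinking. The homotopy between $P$ and $\llbracket\Sigma\rrbracket\modk$ is swept out by the small displacements and has mass $o(1)$, which gives (2).

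\textbf{Step 4: item (3).} If $\tau$ has an embedded $\mathbb{Z}_k$-manifold representative $N$, the obstructions on $\mathcal{K}^{m-5}$ assemble into a class that must vanish. We take the $\Sigma$ just built and $N$, and note they are homologous mod $k$. Following \cite{ABCD}, the approach is not to kill the obstructions locally. Instead we place a tiny copy of $N$-type data near the singular skeleton: we realize the difference $\Sigma-N$ as a $\mathbb{Z}_k$-bordism and cut-and-paste. Equivalently, we use the Morgan--Sullivan/Thom classifying map to show the link classes in $\Omega_{\ge4}(\mathbb{Z}_k)$ sum to the obstruction of $\tau$, which is zero. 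They can then be cancelled by tubes of arbitrarily small mass joining the singular points, using $n\ge5$ so that the tubes are embedded. This last step is the main obstacle: one must make the global topological cancellation compatible with mass control. I would first try reproducing the integral argument of \cite{ABCD} verbatim, with $\Omega_*$ replaced by $\mathbb{Z}_k$-bordism.
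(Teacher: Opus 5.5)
Your Steps 1--3 are a legitimate alternative route to items (1) and (2). The paper argues globally: it lifts $x|_{Q_5}$ through the $4$-equivalence of Proposition~\ref{p:4-equivalence} and the $(2n-1)$-equivalence of Lemma~\ref{l:MSOtoOmega}, glues classifying maps across a collar of $\mathcal{K}^{m-2}$, and then squeezes. You instead resolve link by link in the spirit of Sullivan. You use $\Omega_j(\mathbb{Z}_k)=0$ for $j=1,2,3$, which is exactly the content of that $4$-equivalence since $\pi_j(MSO\wedge M\mathbb{Z}_k)\cong\Omega_j(\mathbb{Z}_k)$, together with general position ($j\le 4<n$) for embeddedness. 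This avoids spectra, but leaves some bookkeeping implicit:
\begin{itemize}
\item the modifications must be made conical near lower faces, so that links are defined;
\item the fillings must respect a global labelling of the sheets, so that the result is an untwisted Morgan--Sullivan $\mathbb{Z}_k$-manifold.
\end{itemize}

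The genuine gap is item (3). The claim that the link classes ``sum to the obstruction of $\tau$, which is zero'' and can be cancelled by small tubes does not hold as stated.
\begin{itemize}
\item The codimension-$5$ obstructions form a chain in $C_{m-5}(\mathcal{K};\Omega_4(\mathbb{Z}_k))$. The existence of $N$ only kills its homology class. That would let you re-choose the codimension-$4$ fillings along an $(m-4)$-chain bounding it; your ``tubes'' are just the case $m=5$ of this.
\item After that, obstructions in $\Omega_5(\mathbb{Z}_k),\Omega_6(\mathbb{Z}_k),\dots$ on lower skeleta are higher obstructions. They depend on all previous choices, and the existence of \emph{some} embedded representative says nothing about \emph{your} choices.
\item Equivalently, $Q_5=\mathcal{M}\setminus\mathcal{K}^{m-5}$ has homotopy dimension $n+4$, and $\pi_4(MSO\wedge M\mathbb{Z}_k)\cong\mathbb{Z}_k\neq0$. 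So embedded $\mathbb{Z}_k$-representatives of the same class on $Q_5$ need not be bordant, and the difference between your $\Sigma$ and $N$ cannot in general be localized near $\mathcal{K}^{m-5}$.
\item You also give no mechanism controlling the mass of whatever is pasted in.
\end{itemize}

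The missing idea is to compare with $N$ where there is no ambiguity, namely on $Q_2=\mathcal{M}\setminus\mathcal{K}^{m-2}$. This space has the homotopy type of an $(n+1)$-complex, so by Proposition~\ref{p:4-equivalence} and Lemma~\ref{l:MSOtoOmega} maps from it into $MSO(n-1)\wedge M(\mathbb{Z}_k,1)$ are classified by the pulled-back class. Hence the Thom--Pontryagin map $F$ of the codimension-$2$ approximation $\Gamma$ is homotopic to $g|_{Q_2}$, where $g\colon\mathcal{M}\to MSO(n-1)\wedge M(\mathbb{Z}_k,1)$ classifies $N$ (Theorem~\ref{t:criterionmodk}). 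Gluing $F$, the homotopy and $g$ across a collar of $\mathcal{K}^{m-2}$, and applying two-stage transversality, gives a closed embedded $\mathbb{Z}_k$-submanifold of $\mathcal{M}$. It coincides with $\Gamma$ outside an arbitrarily small neighborhood $U(\mathcal{K}^{m-2})$, but has \emph{uncontrolled} mass inside it. The squeezing lemma, Proposition~\ref{p:squash}, then supplies a diffeomorphism isotopic to the identity that compresses this part towards $\mathcal{K}^{m-2}$, which has dimension $m-2<m$. This gives $\mathbb{M}(R')\le(1+\varepsilon_d)\mathbb{M}(S)$ with $R'$ flat-close to $S$.

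This is exactly the reconciliation of global topology with mass control that you flag as the main obstacle but do not supply. A ``tiny copy of $N$'' is not available, since $N$ is a global object. The argument of \cite{ABCD} you propose to copy is this classifying-map and squeezing argument, not a cancellation of link bordism classes.
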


\begin{remark}
The construction of the approximating $\Z_k$-smooth submanifold outside of a codimension 5 singular set in Theorem \ref{t:1} is optimal and matches (in high codimension) the dimensional estimates established in \cite{ABCD} on the singular set in the integral setting; see Section \ref{s:sharpness}. Although this is expected (given the definition of a $\mathbb{Z}_k$-smooth manifold), a geometric representation theory for singularities of mod $k$ integral cycles is needed to show it. This is achieved by Kreck's theory of stratifolds with $\mathbb{Z}_k$-coefficients, \emph{cf.} \cite{3tizi}, that we exploit together with Sullivan's geometric resolution of singularities \cite{SullivanLiverpool} to prove sharpness of our main result, see Theorem \ref{t:Thom_innatelysingular}.
\end{remark}

\begin{remark}
We remark that for $k=2$ the theorem is not in contrast with \cite{nonorientato}: while any $\mathbb{Z}_2$-manifold (with nonempty Bockstein) has the structure of an non-orientable smooth manifold, the converse is not true. In fact, $\mathbb{Z}_2$-manifolds admit more structural properties due to the orientation of the two manifolds joining at the Bockstein, see Remark \ref{r:Z2}.
\end{remark}

Some further remarks are now in order, mainly to comment on the need for sufficiently high codimension in Theorem \ref{t:1}.

\begin{remark}\label{r:unstable0}
The high codimension requirement $n\geq 5$ or, for $n<5$, $n\geq m+1$ in Theorem \ref{t:1} comes from working in the stable homotopy category of spectra; in particular, we need to be in the \emph{stable range} of dimensions. The intuition is that the complement of the $m-5$-skeleton of $\mathcal{M}$ has (homotopically) dimension $n+4$, so that when $n\geq 5$ we have enough connectivity to be in the stable dimensions for the correct ``classifying space'' for $\mathbb{Z}_k$-smooth embedded manifolds; analogous is the case for $n\geq m+1$, see Remark \ref{r:important}. Nonetheless, this does not seem to be a limitation of our approach, but rather a confirmation that embeddedness (or, in a sense, transversality) is a much more difficult property to obtain for singular manifolds rather than smooth ones. The fact that we are modeling $\mathbb{Z}_k$-manifolds, which are singular spaces (even though with well-behaved codimension 1 singularities), makes the whole theory much more complicated to deal with; this is the case, for instance, in the definition of the suitable ``classifying space for embedded $\mathbb{Z}_k$-smooth manifolds'' modeling such singularities and the global structure around them.
\end{remark}

\begin{remark}\label{r:unstable}
While Morgan-Sullivan mostly treat $\mathbb{Z}_k$-manifolds intrinsically, here we potentially have to face global obstructions related to how we embed the codimension 1 stratum of singularities in the ambient space $\mathcal{M}$ (which is locally modeled as $\mathbb{R}^{m-1} \times C(k)$, the product of an Euclidean spine with the cone over $k$ points) and the global restrictions imposed on the normal bundles of the sheets. If the codimension $n$ is high enough, these differences can be disregarded. It would be interesting to understand the unstable regimes also, depending on the conditions imposed on the normal bundle of the Bockstein. This would require defining suitable ``classifying spaces for $\mathbb{Z}_k$-manifolds'' with different restrictions on their normal bundles. Two other interesting directions are what can be said for smoothly \emph{immersed} $\mathbb{Z}_k$-manifolds\footnote{Coupling what has been developed here with Wells' theory of immersed cobordism in \cite{Wells} and compression theorems à la Rourke-Sanderson, \emph{cf.} \cite{RSI, RSII, RSIII}.}, \emph{cf}. \cite{MinterStanbury}, or if one is willing to allow some ``twisted'' structure in the sheeting or more general singularities of Baas-Sullivan type\footnote{According to the theory of \emph{twisted} $\mathbb{Z}_k$-manifolds and normal block bundles \emph{cf.} \cite{Rourke} and \cite[Example 3.1 (2)]{BRS}.}.
\end{remark}

\begin{remark}
Not only are Theorem \ref{t:1} and Remark \ref{r:unstable} of interest \emph{per se}, but they may also be compared to the (local) regularity theory of stable minimal (hyper)surfaces, area-minimizing currents mod $k$ with an immersed structure, and the various restrictions on the structure of \emph{classical} (or \emph{open book}) singularities. The latter represent the codimension 1 manifold of singularities given by the Bockstein, \emph{i.e.} singular points where the hypersurface has the structure of a finite number of smooth embedded submanifolds with a common identified boundary, \emph{cf.} \cite{IVW, dlandgang1}. In Wickramasekera’s regularity theory \cite{IVW} one assumes the absence of classical singularities, thus making it closer to a regularity theory for embedded stable minimal hypersurfaces. The high codimension assumptions in our global $\mathbb{Z}_k$-smooth approximation theory can be thought of as playing a similar role, where we disregard (in topologically stable regimes) the embeddability obstructions of classical singularities that may appear in low-codimension. However, allowing for classical singularities that are not necessarily embedded in the regularity theory allows for phenomena such as branch points, therefore making the theory significantly more complicated, both from the (local) analytic point of view, and the (global) topological one. We refer the reader to \cite{IVMW, MinDensity5/2, MinDensityQ+1/2, dlandgang2, SSS25, MinterStanbury, MX26, HSSS26} for more on the structure of singularities and sheeting theorems in the presence of branch points in the setting of stable minimal hypersurfaces and area-minimizing currents mod $k$.
\end{remark}

\begin{remark}
	Instead, the topological obstructions preventing one from representing mod $k$ homology classes as continuous images of $\mathbb{Z}_k$-manifolds and leading to the presence of the $m-5$-dimensional singular set do not vanish just by increasing the codimension $n$; they are not embeddability obstructions, but rather they represent \emph{innate singularities} in the sense of Sullivan \cite{SullivanLiverpool} (or, more precisely, in the sense of Thom \cite[Chapitre III]{Thom54}). This again confirms the analogy with the integral setting and differs drastically from the unoriented case in which all obstructions come from the possibility of deforming a singular submanifold to an embedded smooth one, \emph{cf.} \cite[Section 6]{nonorientato}. Hence, if one is willing to disregard any embeddability issue (\emph{i.e.} assuming sufficiently high codimension $n$) and focus the analysis of such innate singularities, Theorem \ref{t:1} provides the optimal estimates of the singular set; this comes as a corollary of the $4$-equivalence between the ``classifying space for embedded $\mathbb{Z}_k$-smooth manifolds'' $MSO \wedge M\mathbb{Z}_k$ and the Eilenberg-MacLane spectrum $H\mathbb{Z}_k$, see Proposition \ref{p:4-equivalence}.
\end{remark}

\subsection{Corollaries of the main result}
We present now some corollaries of Theorem \ref{t:1}. We will follow definitions and terminology of \cite{ABCD} and \cite[4.2.26]{Federerbook}, which will be briefly recalled in Section \ref{s:notationandpreliminaries}. Moreover, we will always rely on the following assumptions unless otherwise stated.

\begin{assumptions}\label{a:1}
 $\mathcal{M}$ is a connected, smooth, closed and oriented Riemannian manifold of dimension $m+n$, where $n,m \in \N\setminus \{0\}$ are arbitrary positive integers with $n\geq \min\{5,m+1\}$, $\tau$ is an element of the $m$-dimensional homology group $H_{m}(\mathcal{M}, \mathbb{Z}_k)$, where $k\geq 2$ is a positive integer, and $T$ is a mod $k$ integral current (hence a cycle) representing $\tau$.
\end{assumptions}

As usual, $\mathcal{K}^j$ will denote the $j$-dimensional skeleton of $\mathcal{K}$ and $\Sigma$ will denote either $\mathbb{Z}_k$-smooth $m$-dimensional closed embedded submanifolds of $\mathcal{M}$ or $\mathbb{Z}_k$-smooth $m$-dimensional embedded submanifolds of $\mathcal{M}\setminus \mathcal{K}^j$ (for some integer $j$) whose topological closure is contained in $\mathcal{K}^j$; We will denote by $\llbracket \Sigma \rrbracket\modk$ the integral cycle mod $k$ induced by $\Sigma$. 

\begin{definition}[\emph{$\mathbb{Z}_k$-smooth representability}]\label{d:smoothrepresentability}
Let $\mathcal{M}, \tau$ and $\Sigma$ be as in Assumption \ref{a:1} and assume $n\geq m+1$. We say that $\tau$ is \emph{representable by a smooth $\mathbb{Z}_k$-submanifold} (or that $\tau$ \emph{admits a smooth $\mathbb{Z}_k$ representative}) if there exists a smooth embedding\footnote{A smooth embedding of a $\Z_k$-manifold $\Sigma$ into a smooth manifold $\mathcal{M}$ has to be intended as a smooth embedding when considering $\Sigma$ as a smooth manifold with boundary passing to the quotient as a $\Z_k$-map, \emph{cf.} Section \ref{s:Zkmanifolds}.} $f:\Sigma \rightarrow \mathcal{M}$ such that $f_*[\Sigma]=\tau$, where $[\Sigma]\in H_m(\Sigma, \mathbb{Z}_2)$ is the fundamental class of $\Sigma$. Analogously, denoting by $x \in H^n(\mathcal{M},\mathbb{Z}_2)$ the Poincaré dual of $\tau$, we say that $x$ is realized by a smooth submanifold whenever $\tau$ is.
\end{definition}

Given Remarks \ref{r:unstable0} and \ref{r:unstable}, one may be instead interested -- on the opposite side -- in approximating any mod $k$ integral cycle with \emph{framed} embedded $\mathbb{Z}_k$-smooth submanifolds in $\mathcal{M}$, \emph{i.e.} smooth $\mathbb{Z}_k$-manifolds with a framing of the normal bundle. Thanks to Proposition \ref{p:poly_approx_prescribedsing} below and the fact that Moore spaces $M(\Z_k,n)$ play the role of ``classifying spaces for framed embedded $\mathbb{Z}_k$-smooth manifolds'', we obtain the following approximation theorem\footnote{Then, depending on the homotopy groups of Moore spaces (which clearly may vary depending on $k$ and $n$) it may even be possible to relax the assumption that $\tau$ admits a framed $\Z_k$-manifold representative, provided a certain lower dimensional skeleton of singularities is allowed in $\Sigma$.}.

\begin{thm}\label{t:Mooreapproximation}
Let $\eps> 0$ and let $\mathcal{M}, \tau, T$ be as in Assumption \ref{a:1}, and assume further that $\tau$ admits an embedded framed $\mathbb{Z}_k$-smooth manifold representative, $k$ is odd and $n\geq3$. Then, there is an embedded framed $\mathbb{Z}_k$-smooth submanifold $\Sigma$ of $\mathcal{M}$ with the following properties.
\begin{enumerate}
\item $\mathcal{H}^{m}(\Sigma) \leq \mathbb{M}(T) + \varepsilon$.
\item The mod $k$ integral cycle $\llbracket \Sigma \rrbracket\modk$ is $\varepsilon$-close to $T$ in mod $k$ flat norm, that is, \newline$ \mathbb{F}^k(\llbracket \Sigma \rrbracket\modk - T) \leq \varepsilon$.
\end{enumerate}
\end{thm}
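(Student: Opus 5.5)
The strategy follows the scheme of \cite{ABCD, nonorientato}. A polyhedral approximation step comes first. The Moore space $M(\mathbb{Z}_k,n)$ then replaces the Thom space as the classifying object, and a transversality step in the spirit of Pontryagin--Thom turns maps into framed $\mathbb{Z}_k$-submanifolds. No singular skeleton survives in the end, because the hypothesis that $\tau$ has a framed $\mathbb{Z}_k$-representative removes the obstruction.

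\textbf{Step 1: polyhedral approximation.} Using Proposition \ref{p:poly_approx_prescribedsing}, replace $T$ by a mod $k$ polyhedral cycle $P$ supported in the $m$-skeleton of a smooth triangulation $\mathcal{K}$ of $\mathcal{M}$. We need $\mathbb{F}^k(P-T)\le \varepsilon/2$ and $\mathbb{M}(P)\le \mathbb{M}(T)+\varepsilon/2$. The coefficients of $P$ lie in $\mathbb{Z}_k$, and along each $(m-1)$-face the mod $k$ sum of the incident coefficients vanishes. Since $k$ is odd, we may use the balanced representatives in $\{-(k-1)/2,\dots,(k-1)/2\}$. Then every $m$-simplex carries $|a|$ parallel sheets of the correct orientation, and around each $(m-1)$-face the surplus of sheets is a multiple of $k$ that can be glued into Bockstein junctions.

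\textbf{Step 2: Moore space model.} Let $x\in H^n(\mathcal{M},\mathbb{Z}_k)$ be the Poincaré dual of $[P]=\tau$. Proceeding as in the proof of Theorem \ref{t:1}, build a map $f:\mathcal{M}\to M(\mathbb{Z}_k,n)$ cell by cell on the dual cell decomposition, together with a locally standard framed $\mathbb{Z}_k$-structure near the dual skeleton. Concretely, $f$ sends a small normal disk around each $m$-simplex of $P$ onto the bottom cell $S^n$ with degree equal to the coefficient. It sends the dual $(n+1)$-cells around $(m-1)$-faces onto the $(n+1)$-cell attached by degree $k$. On the remaining cells it is extended using the cellular structure of $M(\mathbb{Z}_k,n)=S^n\cup_k e^{n+1}$.

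Extending $f$ over cells of dimension larger than $n+1$, that is, near the $(m-2)$-skeleton and lower, meets obstructions in $\pi_{n+j}(M(\mathbb{Z}_k,n))$ for $j\ge1$. This is where the hypotheses are used. By the framed Pontryagin--Thom correspondence for $\mathbb{Z}_k$-manifolds, the existence of a framed representative of $\tau$ gives some map $g:\mathcal{M}\to M(\mathbb{Z}_k,n)$ realizing $x$. We then modify $g$ on neighborhoods of the $m$- and $(m-1)$-skeleta of $\mathcal{K}$ so that it agrees with the local model coming from $P$ near $\spt P$. The modification has to preserve homotopy classes on the boundaries of small balls, and the difference cochains are computed in $\pi_n$ and $\pi_{n+1}$. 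For $k$ odd and $n\ge3$, we have $\pi_{n+1}(M(\mathbb{Z}_k,n))=0$, because the would-be $\mathbb{Z}_2$ contribution from $\eta$ is killed by $k$ odd. Hence the local models glue to $g$ up to homotopy. I expect this to be the main obstacle: one must show that the local modification carrying the geometric information of $P$ is compatible with the global homotopy class of $g$ without introducing new mass. The likely route is to perform the modification only inside tubular neighborhoods of $\spt P$ of arbitrarily small width, and to verify on the boundary spheres that the two maps define the same element of $\pi_{n}$ and $\pi_{n+1}$.

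\textbf{Step 3: transversality and mass.} Make $f$ smooth and transverse, in the $\mathbb{Z}_k$-sense, to a point of the top cell together with the Bockstein stratum. The preimage $\Sigma$ is then an embedded framed $\mathbb{Z}_k$-submanifold. Inside the tubular neighborhood, $\Sigma$ consists of graphs of small parallel displacements of the simplices of $P$, with Bockstein junctions along shrunk copies of the $(m-1)$-faces. This gives $\mathcal{H}^m(\Sigma)\le \mathbb{M}(P)+\varepsilon/2$, and a homotopy of width $\to0$ gives $\mathbb{F}^k(\llbracket\Sigma\rrbracket\modk-P)\le\varepsilon/2$. Combining these bounds with Step 1 yields both conclusions.
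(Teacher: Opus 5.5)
Your topological core is the paper's. Proposition \ref{p:poly_approx_prescribedsing} gives a local Pontryagin--Thom map $\tilde f:\mathcal{M}\setminus\mathcal{K}^{m-2}\to M(\mathbb{Z}_k,n)$, and the hypothesis gives $g:\mathcal{M}\to M(\mathbb{Z}_k,n)$. For $k$ odd and $n\geq 3$ one has $\pi_n(M(\mathbb{Z}_k,n))\simeq\mathbb{Z}_k$ and $\pi_{n+1}(M(\mathbb{Z}_k,n))=0$. Since $\mathcal{M}\setminus\mathcal{K}^{m-2}$ has the homotopy type of an $(n+1)$-complex, the two maps are homotopic there.

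\textbf{The gap is in Step 3.} The glued map $\widehat f$ can agree with the local model only outside a neighborhood $U$ of $\mathcal{K}^{m-2}$. Inside $U$ it equals $g$ near the skeleton and, across a collar, the homotopy from $g$ to $\tilde f$. Its transverse preimage in $U$ is therefore an essentially arbitrary piece of framed $\mathbb{Z}_k$-manifold: the trace of a bordism compressed into the collar, plus $g^{-1}$ near $\mathcal{K}^{m-2}$. Nothing in your argument bounds its $m$-dimensional area. Thinning the collar compresses only the time direction, and the horizontal area of the bordism does not go to zero. Shrinking $U$, or the width of the tubular neighborhoods of $\spt P$, just changes the homotopy and the preimage without any a priori estimate. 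So neither $\mathcal{H}^m(\Sigma)\leq\mathbb{M}(P)+\varepsilon/2$ nor the flat bound follows from your description of $\Sigma$ inside the tubular neighborhood of $\spt P$. The obstacle you flagged, gluing without new mass, is real, and localizing the modification near $\spt P$ does not remove it.

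\textbf{The missing ingredient is the squeezing lemma, Proposition \ref{p:squash}.} This is what the paper means by ``conclude following the same lines of the proof of Theorem \ref{t:1}''.
\begin{enumerate}
\item Put the gluing region inside $B_{\delta_d}(\mathcal{K}^{m-2})$, with $\delta_d=\delta_d(\varepsilon_d,\eta_d)$ and $\eta_d=\delta'_c$. Then $R=\llbracket\Sigma\rrbracket\modk$ and $S=P'$ agree outside $B_{\delta_d}(\mathcal{K}^{m-2})$.
\item The lemma gives a diffeomorphism $\Phi$ isotopic to the identity. It contracts $B_{\delta_d}(\mathcal{K}^{m-2})$ strongly in the directions normal to the skeleton, which shrinks $m$-dimensional area there by an arbitrarily large factor. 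On $B_{\eta_d}(\mathcal{K}^{m-2})\setminus B_{\delta_d}(\mathcal{K}^{m-2})$, where $R=S$, its Lipschitz constant is close to $1$.
\item Hence $\mathbb{M}(\Phi_\sharp R)\leq(1+\varepsilon_d)\mathbb{M}(S)$, and $\Phi(\Sigma)$ is still an embedded framed $\mathbb{Z}_k$-submanifold.
\item The flat estimate comes from the isoperimetric inequality applied to $\Phi_\sharp R-S$. This cycle is supported in $B_{\eta_d}(\mathcal{K}^{m-2})$ and has mass at most $\varepsilon_d\mathbb{M}(S)+2\|S\|(B_{\eta_d}(\mathcal{K}^{m-2}))$.
\end{enumerate}
This is why you need the bound $(c_1)$ of Proposition \ref{p:poly_approx_prescribedsing}, namely $\|P'\|(B_{\delta'_c}(\mathcal{K}^{m-2}))\leq 3\varepsilon_c$. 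It is also why the parameters must be ordered: $\varepsilon_c,\varepsilon_d$ first, then $\delta'_c$, then $\delta_d$, and only then the gluing region.
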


As a byproduct of the proof of Theorem \ref{t:1} we obtain a $\Z_k$-smooth approximation result for low-dimensional cycles in high codimension.

\begin{corollary}
Let $\mathcal{M}, \tau$ be as in Assumption \ref{a:1}. Then, $\tau$ admits a smoothly embedded $\mathbb{Z}_k$-manifold representative whenever $m \in \{1,2,3,4\}$, and hence Theorem \ref{t:1} (3) applies. 
\end{corollary}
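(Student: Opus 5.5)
The plan is to go through Pontryagin--Thom type representability. The corollary says that for $m\le 4$ every class in $H_m(\mathcal{M},\mathbb{Z}_k)$ has a smoothly embedded $\mathbb{Z}_k$-manifold representative. Under Assumption \ref{a:1} with $m\le 4$, the codimension satisfies $n\ge\min\{5,m+1\}$, which places us in the stable range of Remark \ref{r:unstable0}. In that range, embedded $\mathbb{Z}_k$-submanifolds of dimension $m$ in $\mathcal{M}$ are classified, via the Poincaré dual, by homotopy classes of maps $\mathcal{M}\to \underline{(MSO\wedge M\mathbb{Z}_k)}_n$, the $n$-th space of the spectrum $MSO\wedge M\mathbb{Z}_k$. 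The class $\tau$ is realized exactly when its dual $x\in H^n(\mathcal{M},\mathbb{Z}_k)$ lifts along the Thom map $\mathcal{M}\to \underline{(MSO\wedge M\mathbb{Z}_k)}_n\to K(\mathbb{Z}_k,n)$. So I would reduce the statement to a lifting problem for the map $\mathcal{M}\to K(\mathbb{Z}_k,n)$ representing $x$.

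The key input is Proposition \ref{p:4-equivalence}: the natural map $MSO\wedge M\mathbb{Z}_k\to H\mathbb{Z}_k$ is a $4$-equivalence. Hence the map of $n$-th spaces $\underline{(MSO\wedge M\mathbb{Z}_k)}_n\to K(\mathbb{Z}_k,n)$ is $(n+4)$-connected in the stable range, meaning an isomorphism on $\pi_i$ for $i<n+4$ and surjective for $i=n+4$. By obstruction theory, a map from a CW complex of dimension $\le n+4$ lifts along an $(n+4)$-connected map. Here $\dim\mathcal{M}=m+n\le n+4$, so the map representing $x$ lifts. This is the same argument that, in the proof of Theorem \ref{t:1}, lifts over the complement of $\mathcal{K}^{m-5}$, which is homotopically of dimension $n+4$. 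For $m\le4$ the skeleton $\mathcal{K}^{m-5}$ is empty, so the construction runs on all of $\mathcal{M}$ and no singular set remains.

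Once the lift exists, I would make it transverse to the zero section of the appropriate $\mathbb{Z}_k$-Thom space. This stratified transversality handles the top stratum and the Bockstein stratum modeled on $\mathbb{R}^{m-1}\times C(k)$. The preimage is an embedded $\mathbb{Z}_k$-submanifold $\Sigma$ with $f_*[\Sigma]=\tau$. This is the hypothesis of Theorem \ref{t:1}(3), so that item applies.

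I expect the main obstacle to be the low-codimension regime $n<5$, where the hypothesis is only $n\ge m+1$. There one has to check that the Thom-type identification of embedded $\mathbb{Z}_k$-submanifolds with maps into $\underline{(MSO\wedge M\mathbb{Z}_k)}_n$ still holds, together with enough connectivity to pass from the spectrum-level $4$-equivalence to the $n$-th spaces. I would appeal to Remark \ref{r:important}. When $n\ge m+1$, Whitney-type general position makes the embedding and isotopy questions for $m$-dimensional $\mathbb{Z}_k$-manifolds stable. The relevant dimension $m+n$ is then at most about $2n-1$, which lies in the Freudenthal range, so the $n$-th space maps are $(n+4)$-connected, or at least $(m+n)$-connected, which is all the lifting argument needs.
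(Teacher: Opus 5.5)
Your proposal is correct and is essentially the paper's argument. For $m\le 4$ the skeleton $\mathcal{K}^{m-5}$ is empty, so in the proof of Theorem \ref{t:1} one has $Q_5=\mathcal{M}$ of dimension $m+n\le n+4$, and the chain Proposition \ref{p:4-equivalence}, then Lemma \ref{l:MSOtoOmega}, then two-stage transversality yields an embedded $\mathbb{Z}_k$-submanifold of all of $\mathcal{M}$. One simplification of your last paragraph: for $m\le 4$ one has $\min\{5,m+1\}=m+1$, so the hypothesis is always exactly $n\ge m+1$, hence $m+n\le 2n-1$ uniformly, and there is no separate low-codimension case to treat.
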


We remark that in high codimension $n\geq m+1$, from \cite[page 465]{MorSul}, every $m$-dimensional mod $k$ class admits an embedded $\Z_k$-manifold representative if $k=2^r,$ for any $r \in \N$.

Two other immediate corollaries of our constructions are the following. Denoting $\mathcal{Z}_m(\mathcal{M}, \mathbb{Z}_k)$ the set of $m$-dimensional mod $k$ integral cycles with mod $k$ support in $\mathcal{M}$ and by $\mathcal{Z}_{m, Lip}(\mathcal{M},\mathbb{Z}_k)$ the set of $m$-dimensional mod $k$ Lipschitz cycles with mod $k$ support in $\mathcal{M}$. In analogy with \cite{ABCD, nonorientato}, Theorem \ref{t:1} shows the absence of the \emph{Lavrentiev gap phenomenon} for the homological mod $k$ Plateau problem in the absence of topological obstructions.

\begin{thm}[Absence of Lavrentiev gaps]\label{t:Lavrentiev}
Let $\mathcal{M}, \tau, T$ and $\Sigma$ be as in Assumption \ref{a:1}, assume further that $\tau$ is nontrivial and define the following quantities:
\begin{align*}
&\mathbb{M}_{T}:=\min\{\mathbb{M}(T) : T \in \mathcal{Z}_m(\mathcal{M}, \mathbb{Z}_k) \cap \tau\},\\
&\mathbb{M}_{P}:=\inf\{\mathbb{M}(P) : P \in \mathcal{Z}_{m, Lip}(\mathcal{M}, \mathbb{Z}_k) \cap \tau\},\\
&\mathbb{M}_{\Sigma}:=\inf\{\mathcal{H}^m(\Sigma) : \llbracket\Sigma\rrbracket\modk \in \tau, \textrm{ $\Sigma$ is a $\mathbb{Z}_k$-manifold in $\mathcal{M} \setminus \mathcal{K}^{m-5}$ for some triangulation $\mathcal{K}$}\},\\
&\mathbb{M}_{{\rm Reg}} := \inf\{\mathcal{H}^m(\Sigma) : \llbracket\Sigma\rrbracket\modk \in \tau, \textrm{ $\Sigma$ is a $\mathbb{Z}_k$-manifold in $\mathcal{M}$}\}\, .
\end{align*}
Then, $\mathbb{M}_{T}= \mathbb{M}_{P}= \mathbb{M}_{\Sigma}$, and $\mathbb{M}_T = \mathbb{M}_{{\rm Reg}}$ when $\tau$ is representable by an embedded $\mathbb{Z}_k$-smooth submanifold.
\end{thm}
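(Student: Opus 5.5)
We prove the chain of inequalities
\[
\mathbb{M}_T \le \mathbb{M}_P \le \mathbb{M}_\Sigma \le \mathbb{M}_T,
\]
and, when $\tau$ has an embedded $\mathbb{Z}_k$-smooth representative, also $\mathbb{M}_\Sigma \le \mathbb{M}_{\rm Reg} \le \mathbb{M}_T$. First we check that $\mathbb{M}_T$ is attained, so that the minimum makes sense. The class $\tau$ is nonempty by Assumption \ref{a:1}, and mass is lower semicontinuous under mod $k$ flat convergence. The mod $k$ compactness theorem for integral currents mod $k$ \cite[4.2.26]{Federerbook} then shows that a minimizing sequence in $\tau$ subconverges to a cycle of minimal mass. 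The limit stays in $\tau$, because on the compact manifold $\mathcal{M}$ mod $k$ flat-close cycles are homologous (isoperimetric/deformation theorem mod $k$). Nontriviality of $\tau$ guarantees $\mathbb{M}_T>0$, although this is not strictly needed for the equalities.

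\textbf{The easy inequalities.} Every $\mathbb{Z}_k$-manifold $\Sigma$ in $\mathcal{M}\setminus\mathcal{K}^{m-5}$ with $\llbracket\Sigma\rrbracket\modk\in\tau$ can be triangulated compatibly with $\mathcal{K}$. The Bockstein is a smooth codimension one stratum where $k$ sheets meet, and $\mathcal{H}^m(\Sigma)<\infty$. Summing the oriented simplices therefore gives a mod $k$ Lipschitz cycle. Its boundary vanishes mod $k$: along the Bockstein, $k$ equally oriented boundary copies are identified, and the closure of $\Sigma$ adds only a set of dimension at most $m-5$, which carries no $m$- or $(m-1)$-dimensional mass. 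Its mass equals $\mathcal{H}^m(\Sigma)$. This yields $\mathbb{M}_P\le\mathbb{M}_\Sigma$, and likewise $\mathbb{M}_P\le\mathbb{M}_{\rm Reg}$. Trivially $\mathbb{M}_\Sigma\le\mathbb{M}_{\rm Reg}$, since a $\mathbb{Z}_k$-submanifold of $\mathcal{M}$ is in particular one of $\mathcal{M}\setminus\mathcal{K}^{m-5}$. Finally, Lipschitz cycles mod $k$ are integral cycles mod $k$, so $\mathbb{M}_T\le\mathbb{M}_P$.

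\textbf{The nontrivial inequality.} Apply Theorem \ref{t:1} to a minimizer $T$ of $\mathbb{M}_T$ with an arbitrary $\varepsilon>0$. Items (1) and (2) provide a $\mathbb{Z}_k$-smooth submanifold $\Sigma$ of $\mathcal{M}\setminus\mathcal{K}^{m-5}$ with $\llbracket\Sigma\rrbracket\modk=T+\partial S$, hence in $\tau$, and with $\mathcal{H}^m(\Sigma)\le\mathbb{M}_T+\varepsilon$. Letting $\varepsilon\to0$ gives $\mathbb{M}_\Sigma\le\mathbb{M}_T$. When $\tau$ has an embedded $\mathbb{Z}_k$-smooth representative, item (3) lets us take $\Sigma$ to be a $\mathbb{Z}_k$-submanifold of all of $\mathcal{M}$, which gives $\mathbb{M}_{\rm Reg}\le\mathbb{M}_T$. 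The chains then close.

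\textbf{Main obstacle.} All the depth sits in Theorem \ref{t:1}. Within this argument, the only point needing care is that $\llbracket\Sigma\rrbracket\modk$ is a genuine mod $k$ cycle of mass $\mathcal{H}^m(\Sigma)$, so that it competes in the definitions of $\mathbb{M}_P$ and $\mathbb{M}_T$. This amounts to checking that the $k$ sheets meeting along the Bockstein are consistently oriented, so that their boundaries cancel mod $k$. We expect this to follow directly from the definition in Section \ref{s:Zkmanifolds}.
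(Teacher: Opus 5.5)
Your core step matches the paper's: the paper presents this statement as an immediate corollary of Theorem \ref{t:1}. Applying Theorem \ref{t:1} to a minimizer and letting $\varepsilon\to0$ gives $\mathbb{M}_\Sigma\le\mathbb{M}_T$, and, under the representability hypothesis, $\mathbb{M}_{\rm Reg}\le\mathbb{M}_T$.

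The gap is in the upper bound for $\mathbb{M}_P$. In your chain, the only such bound is $\mathbb{M}_P\le\mathbb{M}_\Sigma$, which you obtain by triangulating a competitor $\Sigma$ and summing its simplices. A competitor for $\mathbb{M}_\Sigma$ is a $\mathbb{Z}_k$-submanifold of the open set $\mathcal{M}\setminus\mathcal{K}^{m-5}$, and it is in general noncompact, since its closure may meet $\mathcal{K}^{m-5}$. This is typically the case even for the $\Sigma$ produced by Theorem \ref{t:1}. Any triangulation of $\Sigma$ therefore has infinitely many simplices. Moreover, its closure need not be triangulable: finite area does not rule out infinite topology or oscillation accumulating at $\mathcal{K}^{m-5}$. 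The resulting infinite sum is a finite-mass rectifiable current mod $k$. It is not a Lipschitz chain, which in Federer's (and the paper's) convention is a \emph{finite} sum of Lipschitz push-forwards of oriented simplices. So, as written, your argument never establishes $\mathbb{M}_P\le\mathbb{M}_T$.

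The repair is short.
\begin{itemize}
\item Obtain $\mathbb{M}_P\le\mathbb{M}_T$ directly from the mod $k$ strong polyhedral approximation theorem \cite[$(4.2.21)^{\nu}$]{Federerbook}. For instance, the first approximation step in the proof of Proposition \ref{p:poly_approx_prescribedsing} gives a polyhedral, hence Lipschitz, cycle $P\in\tau$ with $\mathbb{M}(P)\le(1+\varepsilon_c/2)\,\mathbb{M}(T)$.
\item Replace $\mathbb{M}_T\le\mathbb{M}_P\le\mathbb{M}_\Sigma$ by the direct bound $\mathbb{M}_T\le\mathbb{M}_\Sigma$. This holds because each competitor $\llbracket\Sigma\rrbracket\modk$ is itself a mod $k$ integral cycle in $\tau$ of mass $\mathcal{H}^m(\Sigma)$. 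Its boundary is supported in $\mathcal{K}^{m-5}$ and vanishes by Federer's support theorem, exactly as in the proof of Theorem \ref{t:1}.
\end{itemize}
Together with Theorem \ref{t:1}, this gives $\mathbb{M}_T\le\mathbb{M}_P\le\mathbb{M}_T$ and $\mathbb{M}_T\le\mathbb{M}_\Sigma\le\mathbb{M}_T$.

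The $\mathbb{M}_{\rm Reg}$ part then goes through as you wrote it. There, triangulating is harmless because closed $\mathbb{Z}_k$-submanifolds of $\mathcal{M}$ are compact, though it is not needed.
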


Finally, we also immediately obtain the following approximation theorem with integral cycles mod $k$ with prescribed singularities.

\begin{thm}[Approximation by cycles with prescribed singular sets]\label{t:prescribed}
Let $\mathcal{M}, \tau$ and $T$ be as in Assumption \ref{a:1}. Then, there is a sequence of smooth triangulations $\mathcal{K}_j$ of $\mathcal{M}$ and a sequence of embedded $\mathbb{Z}_k$-smooth $m$-dimensional submanifolds $(\Sigma_j)_j$ in $\mathcal{M}\setminus \mathcal{K}_j^{m-5}$ such that
\begin{itemize}
\item[(a)] $\llbracket \Sigma_j\rrbracket\modk \rightarrow T$ in the mod $k$ flat topology,
\item[(b)] $\lim_{j\rightarrow \infty}\mathcal{H}^{m}(\Sigma_j) = \mathbb{M}(T)$,
\item[(c)]$\partial \llbracket \Sigma_j\rrbracket\modk =0$ and $\llbracket \Sigma_j\rrbracket\modk$ is in the same homology class as $T$.
\end{itemize}
\end{thm}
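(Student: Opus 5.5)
The plan is to derive Theorem \ref{t:prescribed} directly from Theorem \ref{t:1}, applied along a sequence of tolerances $\varepsilon_j := 1/j$. For each $j$, Theorem \ref{t:1} applied to $T$ and $\varepsilon_j$ gives three objects: a smooth triangulation $\mathcal{K}_j$ of $\mathcal{M}$, an $m$-dimensional $\mathbb{Z}_k$-smooth submanifold $\Sigma_j$ of $\mathcal{M}\setminus\mathcal{K}_j^{m-5}$, and a mod $k$ integral $(m+1)$-current $S_j$. These satisfy
\[
\mathcal{H}^m(\Sigma_j)\le \mathbb{M}(T)+1/j,\qquad \partial S_j=\llbracket\Sigma_j\rrbracket\modk - T,\qquad \mathbb{M}(S_j)<1/j.
\]

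Properties (c) and (a) come from the existence of $S_j$. Since $\llbracket\Sigma_j\rrbracket\modk - T=\partial S_j$ mod $k$ and $T$ is a cycle mod $k$, we get $\partial\llbracket\Sigma_j\rrbracket\modk=0$ mod $k$. The same relation says $\llbracket\Sigma_j\rrbracket\modk$ is homologous to $T$, so it lies in $\tau$. This gives (c). Next, by definition of the mod $k$ flat norm we can write $\llbracket\Sigma_j\rrbracket\modk - T = 0+\partial S_j$, hence $\mathbb{F}^k(\llbracket\Sigma_j\rrbracket\modk-T)\le \mathbb{M}(S_j)<1/j\to 0$, which gives (a).

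For (b), the upper bound $\limsup_j \mathcal{H}^m(\Sigma_j)\le \mathbb{M}(T)$ is immediate from the construction. For the lower bound I would first check that $\mathbb{M}(\llbracket\Sigma_j\rrbracket\modk)\le \mathcal{H}^m(\Sigma_j)$. This holds because the induced cycle is represented by the oriented top stratum of $\Sigma_j$ with multiplicity one. The Bockstein stratum and $\mathcal{K}_j^{m-5}$ are $\mathcal{H}^m$-null, so they carry no mass. Then the lower semicontinuity of mod $k$ mass under mod $k$ flat convergence (\cite[4.2.26]{Federerbook}) gives
\[
\mathbb{M}(T)\le\liminf_j \mathbb{M}(\llbracket\Sigma_j\rrbracket\modk)\le \liminf_j\mathcal{H}^m(\Sigma_j).
\]
Combining the two bounds yields (b).

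No step is a genuine obstacle once Theorem \ref{t:1} is granted. The only point needing care is the mass inequality $\mathbb{M}(\llbracket\Sigma_j\rrbracket\modk)\le\mathcal{H}^m(\Sigma_j)$, that is, confirming that the mod $k$ cycle induced by a $\mathbb{Z}_k$-manifold has density at most one $\mathcal{H}^m$-a.e. This follows from the definition of $\llbracket\Sigma\rrbracket\modk$ in Section \ref{s:Zkmanifolds}.
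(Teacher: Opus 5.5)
Your proposal is correct and follows the route the paper intends. The paper states Theorem \ref{t:prescribed} as an immediate consequence of Theorem \ref{t:1} without writing out a proof, and your argument supplies it. You apply Theorem \ref{t:1} with $\varepsilon_j=1/j$, read off (a) and (c) from the small-mass filling $S_j$, and obtain the lower bound in (b) from $\mathbb{M}(\llbracket\Sigma_j\rrbracket\modk)\le\mathcal{H}^m(\Sigma_j)$ together with lower semicontinuity of mod $k$ mass under mod $k$ flat convergence.
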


\subsection{Overview of the proof}

Our approach builds on the new theory introduced in \cite{ABCD}, later adapted in \cite{nonorientato} to deal with integral cycles mod 2, and we encourage the reader to review the strategy of the proofs developed therein before concentrating on the novelties presented here. 

In this paper we introduce new concepts and methods that advance the techniques of \cite{ABCD} mainly in three directions: we develop rather sophisticated topological tools to extend the smooth approximation theory to cycles with coefficients in $\Z/k\Z$, using Morgan-Sullivan's theory of $\Z_k$-manifolds \cite{MorSul}; we noticeably simplify many arguments of \cite{ABCD}, in particular eliminating the need for considering smooth neighborhoods of skeleta and relative homology classes in their complements, \emph{cf.} \cite[Section 3.1]{ABCD}; and, moreover, we remark on the impossibility of locally approximating integral and mod $k$ integral cycles, highlighting in particular the limitations of such smooth approximation theories in general codimension.

Let us now comment in more detail on the important differences and additional difficulties we face in proving Theorem \ref{t:1}. In particular, while the geometric measure theory propositions in Section \ref{s:GMT} are just simple adaptations of previous results (like the ``codimension 2 smooth approximation theorem'' and the ``squeezing lemma'' in \cite[Propositions 4.1 and 4.3]{ABCD}), there are a number of topological difficulties that we need to face and that come, roughly speaking, as a consequence of the following fact: dealing with $\Z_k$-manifolds is much more delicate since we are dealing with singular spaces, rather than smooth ones. We face this by introducing new tools and topological techniques (mostly from stable homotopy theory) that we list below.

\vspace{0.25cm}
\noindent\textbf{New difficulties, new ingredients and differences with the smooth approximations}
\vspace{-0.15cm}

\vspace{0.25cm}
\noindent \textit{1.3.1 Geometric measure theory propositions.}
\vspace{-0.15cm}

First of all, we need to adapt the codimension 2 smooth approximation theorem of \cite[Proposition 4.1]{ABCD}: we want to approximate every mod $k$ cycle with an embedded $\mathbb{Z}_k$-smooth submanifold outside of the $m-2$-dimensional skeleton $\mathcal{K}^{m-2}$ of a smooth triangulation $\mathcal{K}$ of $\mathcal{M}$, see Proposition \ref{p:poly_approx_prescribedsing}. This requires a few additional steps compared to the analogous theorem in the integral setting: while there is no need to resolve the codimension 1 singularities, in order to obtain a $\Z_k$-smooth manifold structure outside of $\mathcal{K}^{m-2}$ we additionally need to split them into groups, each consisting of exactly $k$ $m$-dimensional faces joining together. This is not complicated, but must be taken care of in order to obtain a map $F$ from $\mathcal{M}\setminus \mathcal{K}^{m-2}$ with values in a suitable classifying space for embedded $\Z_k$-smooth manifolds.

\vspace{0.25cm}
\noindent \textit{1.3.2 Borel-Moore homology.}
\vspace{-0.15cm}

The first important novelty is the much cleaner option of working with a Thom's criterion for manifolds that are not necessarily compact, to avoid dealing with smooth neighborhoods of skeleta and relative Thom's criteria as in \cite[Section 3.1 and Theorem 2.6]{ABCD}. This requires the introduction of Borel-Moore homology and a Thom representability criterion valid for Borel-Moore homology, \emph{cf.} Section \ref{s:spectra}, allowing us to work with homology classes and properly embedded submanifolds in $\mathcal{M}\setminus \mathcal{K}^{m-2}$ and $\mathcal{M}\setminus \mathcal{K}^{m-5}$ directly. We note that this method can be adopted in \cite{ABCD, nonorientato} as well, noticeably simplifying many technical steps.

\vspace{0.25cm}
\noindent \textit{1.3.3 Stable homotopy category of spectra and the classifying space for embedded $\Z_k$-smooth manifolds.}
\vspace{-0.15cm}

We now describe the core differences with respect to the main argument in \cite{ABCD, nonorientato}, which are of a topological nature. First, instead of working with maps with values in the Thom spaces of the universal $n$-plane bundles $MSO(n)$ or $MO(n)$, we need to deal with the correct ``classifying space for embedded $\Z_k$-manifolds'' which, roughly speaking, is a space that encodes all infinitesimal models of what a $\Z_k$-manifold can look like. Specifically, it must model codimension 1 singularities where $k$ oriented sheets join together. Building on Morgan-Sullivan's theory in \cite{MorSul}, this classifying space is represented by a suitable ``$n^{th}$-space'' inside $MSO \wedge M\Z_k$, the latter being the smash product of the oriented Thom spectrum $MSO$ and the Moore spectrum $M\Z_k$. In particular, we rely on stable homotopy theory and the theory of spectra to prove that the map from $MSO \wedge M\Z_k$ with values in the Eilenberg-MacLane spectrum $H\Z_k$ representing the mod $k$ Thom class is a $4$-equivalence of spectra, \emph{cf.} Proposition \ref{p:4-equivalence}. This is morally the analog of the crucial $n+4$-homotopy equivalence between $MSO(n)$ and the Eilenberg-MacLane space $K(\Z,n)$ of \cite[Lemma 5.2]{ABCD} in this setting. We provide a brief introduction to the stable homotopy category of spectra in Section \ref{s:Zkmanifolds}. 

Nevertheless, this suitable $n^{th}$-space (formally defined as $\Omega^{\infty}\Sigma^{n}(MSO \wedge M\Z_k)$, which is the infinite loop space of the $n$-shifted $\Omega$-spectrum equivalent to $MSO \wedge M\Z_k$) is just an abstract space and no transversality theorem can be applied directly with it, simply because it is not the Thom space of some vector bundle. However, in stable regimes (\emph{i.e.} when the codimension $n$ is high enough), we show that this suitable $n^{th}$-space of $MSO \wedge M\Z_k$ can be identified with $MSO(n-1) \wedge M(\Z_k,1)$, \emph{cf.} Lemma \ref{l:MSOtoOmega}. This allows for a refined Thom criterion for embedded $\Z_k$-smooth submanifolds to be carried out directly on $\mathcal{M}$ (and not only on $\mathcal{M}\times \R^{\ell}$ for $\ell$ large, as done in \cite{MorSul} for instance), with homotopy classes of maps taking values in $MSO(n-1) \wedge M(\Z_k,1)$, \emph{cf.} Theorem \ref{t:criterionmodk}. In this definition, we remark that the Moore space $M(\Z_k,1)$ has to be thought intuitively as the space modeling the codimension 1 stratum of singularities, near which the $\Z_k$-manifold is the product of an $m-1$-dimensional smooth oriented embedded manifold and the cone on $k$-points. We refer to Remark \ref{r:important} for more precise comments on why, in stable regimes, $MSO(n-1) \wedge M(\Z_k,1)$ can be identified as the correct classifying space for embedded $\Z_k$-smooth manifolds.

In addition, we mention that such a Thom criterion -- and in particular the transversality argument -- is more complicated than the one for smooth submanifolds: it relies on a delicate two-stage transversality procedure that is only mentioned and not proved in \cite{MorSul}; here we develop a clear proof of it as part of Theorem \ref{t:criterionmodk}.

\vspace{0.25cm}
\noindent \textit{1.3.4 Optimality of the codimension 5 singular set.}
\vspace{-0.15cm}

In Section \ref{s:sharpness} we rely on Kreck's theory of stratifolds and its recent generalization to $\Z_k$-coefficients \cite{3tizi} to show that the codimension 5 singular set in Theorem \ref{t:1} is optimal. In fact, the latter theory provides a geometric model allowing to represent every $\mathbb{Z}_k$-homology class. In particular, this allows for a fine geometric description of those singularities obstructing representability of $\Z_k$-homology classes by $\Z_k$-smooth manifolds. In Theorem \ref{t:Thom_innatelysingular} we prove this optimality via Sullivan's geometric resolution of singularities \cite{SullivanLiverpool} for $\Z_3$-manifolds. In particular, we manufacture an oriented manifold $\mathcal{M}$ with a mod $3$ homology class of sufficiently high codimension that does not admit a $\Z_3$-manifold representative outside of the $m-6$-skeleton $\mathcal{K}^{m-6}$ of any smooth trangulation $\mathcal{K}$ of $\mathcal{M}$.

\vspace{0.25cm}
\noindent \textit{1.3.5 Impossibility to obtain a setwise approximation theorem.}
\vspace{-0.15cm}

In Section \ref{s:setwise}, we construct an example of an area-minimizing cycle $T$ in a nontrivial integral homology class (admitting a smooth embedded representative) of an oriented Riemannian manifold $\mathcal{M}$ whose support cannot be locally approximated by a smooth submanifold. We then adapt the construction to show that the same phenomenon can be replicated for mod $k$ minimizers of nontrivial $\Z_k$-homology classes with $\Z_k$-smooth manifold representatives.

\vspace{0.25cm}
\noindent\textbf{Previous literature}
\vspace{0.1cm}

The theory of integral currents was introduced by Federer and Fleming in their celebrated article \cite{FedererFleming60} to provide a suitable framework to solve the oriented Plateau problem without topological or dimensional restrictions, but requiring the domain of integration to be oriented and homology classes to have integer coefficients. Their theory of integration was later extended to different classes of surfaces firstly by Ziemer in \cite{Ziemer} (over chains with coefficients in $\mathbb{Z}_2$) and, subsequently, in much broader generality by Fleming in \cite{Fleming66}, over chains with coefficients in a finite abelian group $G$ (and, in particular, in $\mathbb{Z}_k$ for any natural number $k\geq 2$). This allowed for the development of a general existence theory for mass minimization problems over such classes, maintaining important combinatorial properties and a basic level of (measure theoretic) regularity. Another extension worth mentioning came with the work of White \cite{White90s}, in which the main rectifiability results were generalized to chains with coefficients in a general normed abelian group\footnote{More precisely, one may characterize those complete normed abelian coefficient groups for which finite-mass flat chains are rectifiable as being those that do not contain any non-constant finite-length continuous curve.}. 

Many regularity properties have been later derived for integral currents mod $k$. In particular, an extended literature has been developed for local regularity of mod $k$ minimizers, with different results depending on both the codimension and the integer $k$. In particular, in codimension $n=1$, the regularity theory for mod $k$ minimizing hypercurrents started from the early works by Taylor in \cite{TaylorMod3} for two-dimensional mod $3$ minimizers in $\R^3$, which provides a complete local description of the singular set as real analytic arcs where three regular (multiplicity one) sheets of the current meet at equal angles\footnote{When coupled with the regularity bootstrapping of Kinderlehrer-Nirenberg-Spruck \cite{KNS}.}. Then, White in \cite{WhiteMod4} considered the case of mod $4$ minimizing hypersurfaces, showing a local characterization via a decomposition into a pair of mod $2$ minimizers, which in particular yields a structure consisting of smooth immersed submanifolds outside of a closed set of zero $\mathcal{H}^{m-2}$-measure. Furthermore, in White \cite{White86} considered mod $k$ minimizing hypersurfaces for $k$ odd, showing that they are smooth locally around any point with density below $k/2$ (which is the highest density a.e.); this in turn establishes that their singular set has Hausdorff dimension at most $m-1$. 

The first partial regularity theorem for area-minimizing integral currents mod $k$ of arbitrary dimension, codimension and modulus $k$ was proved recently by De Lellis, Hirsch, Marchese and Stuvard in \cite{dlandgangCPAM, dlandgangGAFA}. Since then, a general refined analysis of the structure of the singular set of minimizers in this setting has been carried out: by the combined effort of \cite{dlandgang1, dlandgang2, dlandgang3, IVMW, Sk-modp} we now know that $m$-dimensional area-minimizing integral currents mod $k$ in smooth Riemannian manifolds of dimension $m+1$ have an interior singular set which is, up to a countably $(m-2)$-rectifiable set, a $C^{1,\alpha}$-submanifold\footnote{The analogue of the mod $3$ higher regularity result \cite{KNS} is established by Krummel in \cite{KrummelHigherReg}, which shows in fact that $C^{1,\alpha}$ regularity can be upgraded to the same regularity as the ambient Riemannian manifold.} of dimension $m-1$ at which, locally, $N\leq k$ regular sheets of the current join transversally, each sheet counted with a positive integer multiplicity $h_i$ so that $\sum_ih_i=k$. When the current is two-dimensional, one can in addition obtain a more precise structure close to branch points, as recently established by Skorobogatova-Spolaor-Stuvard in \cite{SSS25}, and subsequently in Krummel-Minter-Wickramasekera in \cite{KMW} as a consequence of the branched stratification theory therein combined with the regularity theory in \cite{IVMW}.

The fine structure of the singular set was first studied also in codimension higher than 1 only recently by De Lellis-Minter-Skorobogatova in \cite{modq}, showing that the singular set consists locally of an $(m-1)$-dimensional $C^{1,\alpha}$-submanifold, up to a countably $(m-2)$-rectifiable set. For two-dimensional currents of higher codimension, one can obtain more precise sheeting results close to classical singularities and branch points, analogous to the aforementioned ones for hypersurfaces; see \cite{HSSS26}. Hence, in all such regularity results the clear pattern is that of recovering the well-known regularity theory for mass-minimizing integral currents, up to the codimension 1 smooth manifold of singularities.

Instead, the only \emph{a priori} (global) approximation-type result for integral currents mod $k$ was the classical mod $k$ deformation theorem \cite[(4.2.9)$^{\nu}$]{Federerbook} and, as a corollary, the mod $k$ strong polyhedral approximation theorem, \textit{cf.} \cite[(4.2.20)$^{\nu}$, (4.2.21)$^{\nu}$]{Federerbook}, both developed as simple adaptations of the ones for integral currents. In analogy with what has been done for integral cycles and integral cycles mod 2, it is hence natural to expect that similar ``smooth'' approximation theorems to the ones developed in \cite{ABCD, nonorientato} hold for integral cycles mod $k$, where the concept of smooth manifold is replaced by that of $\mathbb{Z}_k$-smooth manifold.

This article provides such $\mathbb{Z}_k$-smooth approximation every time the mod $k$ homology class admits a smooth embedded $\Z_k$-manifold representative and proves an optimal codimension-5 singular set estimate for general mod $k$ homology classes in stable regimes. Finally, in Section \ref{s:setwise} we provide examples of integral and mod $k$ minimizing cycles (in homology classes with smooth and $\mathbb{Z}_k$-smooth submanifolds representatives) which cannot be locally approximated by smooth and $\mathbb{Z}_k$-smooth submanifolds respectively.


The rest of the paper is organized as follows. In Section \ref{s:notationandpreliminaries} we briefly recall the main notation and results in the theory of integral currents mod $k$, in stable homotopy theory, Borel-Moore homology and $\Z_k$-manifolds; in particular, we prove the key mod $k$ Thom criterion in Theorem \ref{t:criterionmodk}, together with Lemma \ref{l:MSOtoOmega}. In Section \ref{s:GMT} we prove the adapted mod $k$ version of the two main technical propositions from geometric measure theory, \emph{i.e.} Proposition \ref{p:poly_approx_prescribedsing} and Proposition \ref{p:squash}. Finally, Section \ref{s:proofofmain} is dedicated to the proof of Theorem \ref{t:1}, Section \ref{s:sharpness} shows the optimality of the construction and in Section \ref{s:setwise} we construct examples showing the impossibility of improving Theorem \ref{t:1} to a setwise approximation theorem.

\subsection*{Acknowledgements}
GC would like to thank Martin Bendersky for useful and enjoyable discussions about spectra and the connectivity of Thom spaces. GC's research is funded by the European Research Council (ERC) through CoG 101169953 ``RISE”\footnote{Views and opinions expressed are however those of the authors only and do not necessarily reflect those of the European Union or the European Research Council.}. A.S.~is grateful for the generous support of Dr.~Max R\"ossler, the Walter Haefner Foundation, and the ETH Z\"urich Foundation. This research was conducted during the period A.S.~served as a Clay Research Fellow.

\section{Notation and preliminary results}\label{s:notationandpreliminaries}

We recall here the main definitions and relevant notation. We remark that throughout the whole article $k$ will denote a positive integer greater or equal than 2; in addition, for a fixed $k$, we adopt the standard notation $\mathbb{Z}_k$ to denote the cyclic group $\mathbb{Z}/k\mathbb{Z}$ of order $k$.

\subsection{Theory of integral currents mod $k$}\label{s:currentsmodk}

We briefly recall the theory of mod $k$ integral currents; as a standard reference we refer to \cite[4.2.26]{Federerbook} or, more historically, to Fleming in \cite{Fleming66}.

We denote by $\mathbf{R}_d(\mathbb{R}^N)$ the space of $d$-dimensional integer rectifiable currents, by $\mathbf{I}_d(\R^N)$ the subgroup of $d$-dimensional integral currents and by $\mathbf{F}_d(\mathbb{R}^N)$ the space of $d$-dimensional integral flat chains in $\mathbb{R}^N$, that is $$\mathbf{F}_d(\mathbb{R}^N):=\{T + \partial S : T \in \mathbf{R}_d(\mathbb{R}^N), \,S \in \mathbf{R}_{d+1}(\mathbb{R}^N)\}.$$ If $M \subset \mathbb{R}^N$ is a compact Lipschitz neighborhood retract, then $\mathbf{R}_d(M)$ will denote the space of $T \in \mathbf{R}_d(\mathbb{R}^N)$ with support $\spt(T)\subset M$; analogously for $\mathbf{F}_d(M)$ and $\mathbf{I}_d(M)$.

\noindent
For $T \in\mathbf{F}_d(M)$, the (integral) flat norm of $T$ is defined as $$\mathbb{F}(T):=\inf \{ \Mass(R) + \Mass(S) \mid T=R +\partial S, \, R \in \mathbf{I}_d(M),\, S \in \mathbf{I}_{d+1}(M)\}.$$

\noindent
For $T \in \mathbf{F}_d(M)$ we recall that its mod $k$ flat norm can be defined as $$\mathbb{F}^k(T):= \inf\{\mathbb{F}(T+kQ) : Q \in \mathbf{F}_d(M)\},$$ its mod $k$ mass as $$\mathbb{M}^k(T):= \inf\{\mathbb{M}(T + kQ) : Q \in \mathbf{F}_d(M)\}$$ and its mod $k$ support as $$\spt^k(T):= \bigcap \{\spt(R) : R \in \mathbf{F}_d(M), \mathbb{F}^k(T-R)=0\}.\footnote{We will usually drop the index $k$ when this is clear from the context.}$$ 

\noindent
Thanks to the beautiful work of Young \cite{Youngnonorientability}, we can define the space of mod $k$ flat chains in $M$ is defined as the quotient group $\mathbf{F}_d(M,\mathbb{Z}_k):=\mathbf{F}_d(M)/k\mathbf{F}_d(M)$ and if $T \in \mathbf{F}_d(M)$, we will denote with $[T]$ or $T$ mod $k$ the coset in $\mathbf{F}_d(M,\mathbb{Z}_k)$ containing $T$. Analogously, we can define the space of integer rectifiable mod $k$ currents and of mod $k$ integral currents, respectively denoted by $$\mathbf{R}_d(M,\mathbb{Z}_k):=\mathbf{R}_d(M)/k\mathbf{R}_d(M), \quad \mathbf{I}_d(M,\mathbb{Z}_k):=\mathbf{I}_d(M)/k\mathbf{I}_d(M).$$ 

\noindent
If $T$ is flat chain mod $k$ with an integer coefficient polyhedral chain as a representative, then $T$ will be called a polyhedral mod $k$ chain; analogously for Lipschitz mod $k$ chains.

In particular, we remark that an integer rectifiable current $T =\llbracket E,\tau, \theta \rrbracket \in \Rect_d(M)$ is called \emph{representative mod $k$} whenever $|\theta(x)|\leq k/2$ for $\|T\|$-$a.e.$ point. Hence, every integer rectifiable current can be written as $T+kQ$, with $T, Q \in \Rect_d(M)$ and $T$ is a representative mod $k$. In particular, $\Mass^k([T])= \Mass(T)$ and $\spt^k([T])=\spt(T)$, whenever $T \in \Rect_d(M)$ and $T$ is a representative mod $k$. If $T \in \Rect_d^k(M)$ and $B \subset \mathbb{R}^N$ is a Borel subset, we define $T\res B:= [T\res B]$, where $T$ is a rectifiable representative of $T$ mod $k$.

The boundary operator $\partial^k$ on $\mathbf{F}^k_d(M)$ is defined by $\partial^k [T]=[\partial T]$, since if $T=S$ mod $k$, then also $\partial T=\partial S$ mod $k$ for any flat chains $T, S \in \mathbf{F}_d(M)$. A flat chain $T \in \mathbf{F}_d(M)$ is a cycle mod $k$ if $\partial T=0$ mod $k$, \emph{i.e.} $\partial^k [T]=0$ and it is a boundary mod $k$ if there exists $S \in \mathbf{F}_{d+1}(M)$ such that $T=\partial S$ mod $k$, \emph{i.e.} $[T]=\partial^k [S]$; the spaces of $d$-dimensional mod $k$ integral cycles and $d$-dimensional mod $k$ integral boundaries with $\spt^k(\cdot) \subset M$ are denoted by $\mathcal{Z}_d(M,\mathbb{Z}_k)$ and $\mathcal{B}_d(M,\mathbb{Z}_k)$, respectively; note that $\mathcal{B}_d(M,\mathbb{Z}_k)$ is a normal subgroup of $\mathcal{Z}_d(M,\mathbb{Z}_k)$. If $M$ is taken to be a closed smooth oriented manifold $\Mcal$, we define its mod $k$ homology groups as 
\begin{equation}\label{e:mod-k-homol}
    H_*(\mathcal{M},\mathbb{Z}_k)= \mathcal{Z}_*(\mathcal{M},\mathbb{Z}_k)/\mathcal{B}_*(\Mcal,\mathbb{Z}_k)\,,
\end{equation}
and it is possible to prove, as a consequence of the mod $k$ deformation theorem, that the homology theory so defined satisfies all seven Eilenberg-Steenrod Axioms, and hence it is equivalent to the singular homology of $\mathcal{M}$ with coefficients in $\mathbb{Z}_k$. In the following sections, we will slightly abuse notation between currents mod $k$ and their representatives. Moreover, when the coefficients are not specified, homology and cohomology have to be understood with coefficients in $\mathbb{Z}_k$.

\subsection{Stable homotopy theory and Borel-Moore homology}\label{s:spectra}

We refer the reader to \cite{ABCD} for an introduction to the main tools in homotopy theory and Thom's theory of oriented cobordism; instead, we will introduce here a few basic notion from stable homotopy theory and spectra. We will limit ourselves to the basic definitions, with no hope to be self-contained, and focus instead on the useful computational properties of such objects that are relevant in the next sections. We will mostly follow \cite{Rudyak} and refer the reader to \cite{Adams, Switzer} for other general references on these topics.

For pointed spaces $X$ and $Y$, we denote by $[X, Y]$ the set of pointed homotopy classes of pointed maps $X \rightarrow Y$. We will often write $Z_+$ for the pointed space associated to a space $Z$. We recall that the \emph{smash product} $X \wedge Y$ is defined by $$X \wedge Y:=X \times Y / X \vee Y, $$ where $X \vee Y$ is viewed as the subspace of $X \times Y$ consisting of those pairs $(x, y)$ such that either $x$ is the basepoint of $X$ or $y$ is the basepoint of $Y$. Intuitively, the smash product operation can be understood as the appropriate analogue of taking the Cartesian product in the category of pointed spaces and it enjoys useful properties (on compactly generated spaces): it is functorial in both variables, the two-point space serves as unit, it is associative and commutative. The appropriate pointed analogue of the function space is the subspace $F(X, Y)$ of $Y^X$ consisting of the pointed maps, with the constant pointed map as basepoint. 

Viewing $S^1$ (which we always consider as a pointed space with basepoint 1) as $I / \partial I$ for a unit interval $I$, we define the (reduced) \emph{suspension} of a pointed space $X$ (with basepoint $*$) to be $\Sigma X:=X \wedge S^1$, that is, $$\Sigma X=X \times S^1 /\left(\{*\} \times S^1 \cup X \times\{1\}\right).$$

Dually, we define the path space of a pointed space $X$ to be $P X:=F(I, X)$, where $I$ is given the basepoint 0. Thus, the points of $P X$ are the paths in $X$ that start at the basepoint. We define the loop space of $X$ to be $\Omega X:=F(S^1, X)$. Its points are the loops at the basepoint. Note that we have the adjunction $F(\Sigma X, Y) \cong F(X, \Omega Y).$ Passing to the set of path components $\pi_0$ (and by the identification of $[X, Y]$ with $\pi_0(F(X, Y))$), this gives that $$[\Sigma X, Y] \cong[X, \Omega Y] .$$

Recall that $S^m \wedge S^n = S^{m+n} $, so that by induction we have that the $n$-fold suspension is thus $$\Sigma^nX= S^1 \wedge \Sigma^{n-1}X = S^1 \wedge (S^{n-1} \wedge X) = (S^1 \wedge S^{n-1}) \wedge X=S^{n} \wedge X.$$ Analogously one has the iterated loop space $\Omega^{n}X= F(S^n, X).$ 

We recall the following fundamental theorem, \emph{cf.} \cite[Theorem 6.26]{Switzer}. For a based space $X$, we define the suspension homomorphism $\Sigma_*: \pi_i(X) \rightarrow \pi_{i+1}(\Sigma X)$ which sends $[f] \mapsto [\Sigma f]$, where $$\Sigma f:=f \wedge \operatorname{id}_{S^1}: S^{i+1} = S^i \wedge S^1 \rightarrow X \wedge S^1=\Sigma X.$$

\begin{thm}[Freudenthal Suspension]\label{t:Freudenthal}
Assume $X$ is an $n$-connected $CW$-complex (based at a $0$-cell), with $n\geq 0$. Then the suspension homomorphism $\Sigma_*: \pi_i(X) \rightarrow \pi_{i+1}(\Sigma X)$ is an isomorphism if $1\leq i \leq 2n$ and a surjection if $i=2n+1$. In other words, the map $\eta: X \rightarrow \Omega \Sigma X$ is a $2n+1$-equivalence.
\end{thm}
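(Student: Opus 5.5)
The plan is the classical homological argument: show that the homotopy fiber of $\eta$ has no homology in low degrees, and then transport this to homotopy groups. First I reduce to a simply connected situation. For $n\geq 1$, $X$ is simply connected, and $\Sigma X$ and $\Omega\Sigma X$ are connected. The space $\Omega\Sigma X$ is an H-space, so $\pi_1$ acts trivially on its higher homotopy. Moreover $\pi_i(\Omega\Sigma X)\cong\pi_{i+1}(\Sigma X)$, and under this adjunction the map $\eta_*$ is identified with $\Sigma_*$. It therefore suffices to prove that $\eta\colon X\to\Omega\Sigma X$ is a $(2n+1)$-equivalence: an isomorphism on $\pi_i$ for $i\leq 2n$ and a surjection on $\pi_{2n+1}$. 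The case $n=0$, where $X$ is only connected, I would treat separately; there the claim is only surjectivity in degree $1$, which follows from Hurewicz applied to the simply connected space $\Sigma X$ together with $\tilde H_2(\Sigma X)\cong H_1(X)$.

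The key step is the James / Bott–Samelson computation. I would use that $H_*(\Omega\Sigma X;\mathbb{Z})$ is the tensor algebra $T(\tilde H_*(X))$ when coefficients are taken in a field, or more robustly the James splitting $\Sigma\Omega\Sigma X\simeq\bigvee_{j\geq1}\Sigma X^{\wedge j}$. From this, $\eta_*\colon H_i(X)\to H_i(\Omega\Sigma X)$ is the inclusion of the tensor-length-one summand. Since $X$ is $n$-connected, $X^{\wedge j}$ is $((j(n+1))-1)$-connected, so the summands with $j\geq2$ begin in degree $2n+2$. Hence $\eta_*$ is an isomorphism on $H_i$ for $i\leq 2n+1$, and in particular surjective in degree $2n+1$.

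Next I pass from homology to homotopy via the relative Hurewicz theorem. Replace $\eta$ by an inclusion and consider the pair $(\Omega\Sigma X,X)$. Both spaces are simply connected, so the long exact homology sequence gives $H_i(\Omega\Sigma X,X)=0$ for $i\leq 2n+1$. By the relative Hurewicz theorem, which applies because the pair is simply connected, $\pi_i(\Omega\Sigma X,X)=0$ for $i\leq 2n+1$. The long exact homotopy sequence then shows that $\eta_*$ is an isomorphism for $i\leq 2n$ and surjective for $i=2n+1$. This is exactly the assertion of Theorem~\ref{t:Freudenthal}.

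The main obstacle is justifying the homology of $\Omega\Sigma X$ in the relevant range without simply quoting James. As a fallback I would use the Serre spectral sequence of the path fibration $\Omega\Sigma X\to P\Sigma X\to\Sigma X$. Since $\Sigma X$ is $(n+1)$-connected, comparison with the contractible total space forces $H_i(\Omega\Sigma X)\cong\tilde H_{i+1}(\Sigma X)\cong H_i(X)$ for $i\leq 2n+1$. The first possible interference comes from $E^2_{p,q}$ with both $p,q>0$, which requires $p\geq n+2$ and $q\geq n+1$, hence total degree at least $2n+3$. One must still check that this isomorphism is realized by $\eta$, by naturality with respect to the cone fibration over $\Sigma X$. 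For CW hypotheses, I note that the relative Hurewicz theorem needs no CW structure on $\Omega\Sigma X$, so it suffices to work up to weak equivalence.
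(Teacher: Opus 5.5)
Your argument is correct. It follows a different route from the one the paper relies on: the paper gives no proof and simply quotes \cite[Theorem 6.26]{Switzer}. There the theorem is deduced homotopically from the Blakers--Massey excision theorem applied to the cone pair $(CX,X)$, using $\pi_{i+1}(CX,X)\cong\pi_i(X)$ and $CX/X=\Sigma X$.

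You instead compute $H_*(\Omega\Sigma X)$ through degree $2n+1$, via the James splitting or the Serre spectral sequence of the path fibration. You then convert this into a homotopy statement with the relative Hurewicz theorem for the simply connected pair $(\Omega\Sigma X,X)$.

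The two routes buy different things.
\begin{itemize}
\item The excision route needs no James construction or spectral sequences. It treats $n=0$ uniformly, and it is an instance of a general statement about homotopy pushouts.
\item Your route uses standard homological machinery. It requires the separate $n=0$ case, which you correctly supply, since relative Hurewicz needs simple connectivity. As a bonus it identifies $\pi_{2n+2}(\Omega\Sigma X,X)\cong\tilde H_{2n+2}(X\wedge X)$, the group that measures the first failure of $\eta_*$.
\end{itemize}

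Two points to tighten:
\begin{itemize}
\item Bott--Samelson gives a tensor algebra only for field coefficients (integrally only when $\tilde H_*(X;\mathbb{Z})$ is free). So the James splitting is the right integral tool, and you should say explicitly that $\Sigma\eta$ is the inclusion of the $j=1$ wedge summand.
\item In the Serre fallback, $CX\to\Sigma X$ is not a fibration. What you actually use is the map of pairs $(CX,X)\to(P\Sigma X,\Omega\Sigma X)$ over $\Sigma X$. You combine it with the relative Serre estimate that $H_i(P\Sigma X,\Omega\Sigma X)\to\tilde H_i(\Sigma X)$ is an isomorphism for $i\leq 2n+2$.
\end{itemize}
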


{\bf The category of spectra}

\begin{definition}
A \emph{spectrum}\footnote{This definition is equivalent to the following one: $E$ is a sequence $\{E_n, \sigma_n\}_{n \in \Z}$ of pointed $CW$-complexes $E_n$ and pointed $CW$-embeddings $\sigma_n: \Sigma E_n \rightarrow E_{n+1}$ (that is, $\sigma_n(\Sigma E_n)$ is a subcomplex of $E_{n+1}$, \emph{cf.} \cite[Lemma-Definition II.1.19]{Rudyak}).} $\{E_n,\sigma_n\}_{n \in \N}$ is a sequence $\{E_n\}_{n}$ of pointed spaces along with pointed maps $\sigma_n:\Sigma E_n \rightarrow E_{n+1}$. 
\end{definition}

Note that the maps can be taken to be $\sigma'_n:  E_n \rightarrow \Omega E_{n+1}, $ since $\Sigma$ and $\Omega$ are adjoints. When the maps are clear from context, we will simply write $E=\{E_n\}$ to denote a spectrum.

Given a spectrum $E$ and an integer $N$, we define the spectrum $\Sigma^NE$ by setting $(\Sigma^NE)_n= E_{n+N}$ with maps $\sigma_{n+N}:\Sigma(\Sigma^NE)_n \rightarrow (\Sigma^NE)_{n+1}$. Moreover, for every pointed $CW$-complex $X$ it is possible to define the spectrum $\Sigma^{\infty}X$ as $(\Sigma^{\infty}X)_n:=\Sigma^nX$, with maps the identity, \emph{i.e.} $\sigma_n: \Sigma (\Sigma^nX) \rightarrow \Sigma^{n+1}X.$ For example, the spectrum $\Sigma^{\infty}S^0$ is the sphere spectrum $\{S^n, i_n\},$ where $i_n: \Sigma S^n \rightarrow S^{n+1}$. 

Let $E=\{E_n, \sigma_n\}$ and $F=\{F_n, t_n\}$ two spectra. A map of spectra $f: E\rightarrow F$ is a family of pointed maps of spaces $f_n: E_n\rightarrow F_n$ of spaces such that $f_{n+1}\circ \sigma_n = t_n \circ \Sigma f_n.$ We skip the definition of morphism of spectra and only recall that any function of spectra is a morphism of spectra, \emph{cf.} \cite[II.1.3]{Rudyak}.

For every pointed space $X$ and every spectrum $E=\{E_n, \sigma_n\}$ it is possible to define the spectrum $X \wedge E:=\{X\wedge E_n, 1 \wedge \sigma_n\}$. In particular, we can define the suspension $S^1 \wedge E$ of a spectrum $E$.

A \emph{homotopy} between two maps $g_0,g_1:E \rightarrow F$ is a map $G: E \wedge [0,1]_{+} \rightarrow F$ restricting to $g_0$ over $\{0\}_{+}$ and to $g_1$ over $\{1\}_{+}$; it is possible to extend this definition to morphisms of spectra, see \cite[Definition II.1.9]{Rudyak}, and with this definition it is possible to define a suitable notion of \emph{homotopy equivalence} of spectra $E\simeq F$, analogous to that of spaces. We denote by $[E,F]$ the set of homotopy classes of morphisms $E\rightarrow F$. 

Working with spectra one usually cares about the limiting behavior of the spaces $E_n$ as $n \rightarrow \infty$. To capture this idea, one defines the homotopy groups of a spectrum as a colimit (direct limit) of the homotopy groups of the spaces $E_n$. Because of the suspensions, the homotopy groups shift in degree as we go from each level to the next.

Let $S=\Sigma^{\infty}S^0$ be the sphere spectrum and $E$ any spectrum. For each $d \in \mathbb{Z}$, the $d^{th}$ \emph{(stable) homotopy group} $\pi_d(E)$ of $E$ is defined as the group $[\Sigma^dS, E]$. It is possible to see that $\pi_d(E)= \lim_{n\rightarrow \infty}\pi_{d+n}(E_n)$, where the colimit is that of the direct system of abelian groups $$\cdots \longrightarrow \pi_{d+n}(E_n) \xrightarrow{s}  \pi_{d+n+1}(\Sigma E_n) \xrightarrow{(\sigma_n)_*} \pi_{d+n+1}(E_{n+1}) \longrightarrow \cdots,$$ where each homotopy group is taken at the basepoint. The operation $s$ suspends each map $S^{d+n} \rightarrow E_n$ to produce a map $S^{d+n+1} \rightarrow \Sigma E_n$. When $d$ is negative, the system is only defined for $n \geq|d|$. Given a morphism of spectra $\varphi: E \rightarrow F$, there is a well-defined induced homomorphism $\varphi_{*}: \pi_d(E) \rightarrow \pi_d(F)$. Note that $\pi_d(E)$ can be non-zero even for $d<0$, a simple example being the spectrum $E=\Sigma^{-N}S$. Analogously one can define the notion of \emph{homology} of a spectrum $E$ as $H_d(E)=\lim_{n\rightarrow \infty}\widetilde{H}_{d+n}(E_n)$.

An analog of the Whitehead theorem is valid for spectra, see \cite[Theorem II.1.10]{Rudyak}: a morphism $\varphi: E \rightarrow F$ is an equivalence if and only if the induced homomorphism $\varphi_{*}: \pi_d(E) \rightarrow \pi_d(F)$ is an isomorphism for every integer $d$.

Important examples of spectra are \emph{Thom spectra}. Let $MSO(n)$ be the Thom space of the universal oriented $n$-plane bundle $\gamma^n$ over the infinite oriented Grassmannian of $n$-planes $BSO(n)$. Then, the Whitney sum $\gamma^n \oplus \varepsilon$ is classified by the map $BSO(n) \rightarrow BSO(n+1)$ and admits a bundle map to $\gamma^{n+1}$, where $\varepsilon$ is the trivial one-dimensional vector bundle. The Thom space of $\gamma^n \oplus \varepsilon$ is $\Sigma MSO(n)$ and the Thom space of $\gamma^{n+1}$ is $MSO(n+1)$, and hence for each $n\geq 1$ we get a map $$\Sigma MSO(n) \rightarrow MSO(n+1).$$ The \emph{Thom spectrum} $MSO$ is the spectrum in which the $n^{th}$-space is $MSO(n)$ and the maps are $\sigma_n: \Sigma MSO(n) \rightarrow MSO(n+1)$. 

A spectrum is an $\Omega$-\emph{spectrum} if for every $n$ the map $\sigma'_n:  E_n \rightarrow \Omega E_{n+1}$ is a homotopy equivalence. When $E$ is an $\Omega$-spectrum, the map from each group $\pi_{d+n}(E_n)$ to the colimit $\pi_d(E)$ is always an isomorphism.

Let $G$ be an abelian group. Recall that an Eilenberg-MacLane space $K(G, n)$ is a $CW$-complex whose $n^{th}$-homotopy group is identified with $G$ and all other homotopy groups are zero. We define an \emph{Eilenberg-MacLane spectrum} $HG$ by $(HG)_{n}:=K(G, n)$, and maps the canonical equivalences $K(G, n) \simeq \Omega K(G, n+1).$ By construction, this is an $\Omega$-spectrum with only one stable homotopy group $\pi_d(HG) \simeq G$, for $d=0$, and $\pi_d(HG) \simeq 0$, for $d \neq0$.

We remark that the sphere spectrum $S$ and Thom spectra are not $\Omega$-spectra, but we have the following important result of Adams, see \cite[Proposition II.1.21]{Rudyak}.

\begin{thm}
Every spectrum $E=\{E_n, \sigma_n\}$ is homotopy equivalent to some $\Omega$-spectrum.
\end{thm}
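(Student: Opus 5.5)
The plan is the standard replacement construction: inductively replace a spectrum $E=\{E_n,\sigma_n\}$ by one whose adjoint structure maps $E_n\to\Omega E_{n+1}$ are homotopy equivalences, while keeping the stable homotopy groups unchanged. First, by the footnoted equivalent definition (\cite[Lemma-Definition II.1.19]{Rudyak}), I may assume each $E_n$ is a pointed CW-complex and each $\sigma_n:\Sigma E_n\to E_{n+1}$ is the inclusion of a subcomplex. Then I define the telescope
\[
\Omega^\infty E_n:=\operatorname{colim}_j \Omega^j E_{n+j},
\]
where the colimit is taken along the maps $\Omega^j\sigma'_{n+j}:\Omega^jE_{n+j}\to\Omega^{j+1}E_{n+j+1}$. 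To avoid point-set issues I would use the mapping telescope (a homotopy colimit) and then replace it by a CW-approximation, so that all spaces stay CW-complexes.

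Set $F_n:=\Omega^\infty E_n$. Because $S^1$ is compact, $\Omega$ commutes with sequential homotopy colimits of inclusions up to weak equivalence. Hence the shift map $F_n\to\Omega F_{n+1}$, which reindexes the colimit, is a weak equivalence. By Whitehead's theorem, after CW-approximation it becomes a homotopy equivalence, so $F=\{F_n\}$ is an $\Omega$-spectrum.

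Next I compare $E$ with $F$. The inclusions of the $j=0$ term give maps $f_n:E_n\to F_n$. These are compatible with the structure maps, so they assemble into a map of spectra $f:E\to F$. On homotopy groups,
\[
\pi_{d+n}(F_n)=\operatorname{colim}_j \pi_{d+n}(\Omega^jE_{n+j})=\operatorname{colim}_j\pi_{d+n+j}(E_{n+j})=\pi_d(E),
\]
since homotopy groups commute with sequential colimits of CW inclusions (compactness of spheres). Because $F$ is an $\Omega$-spectrum, $\pi_{d+n}(F_n)\cong\pi_d(F)$, and this identification is exactly $f_*$. So $f_*$ is an isomorphism for all $d$, and the Whitehead theorem for spectra \cite[Theorem II.1.10]{Rudyak} shows that $f$ is a homotopy equivalence of spectra.

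The main obstacle is the technical bookkeeping. First, the levelwise maps $f_n$ must strictly commute with the structure maps, i.e.\ $f_{n+1}\circ\sigma_n=t_n\circ\Sigma f_n$, which forces a careful choice of the adjoint structure maps for $F$; this is where the CW-approximations must be chosen functorially or compatibly. Second, one must justify that $\Omega$ commutes with the telescope. I would handle both by working with CW-subcomplex inclusions throughout and taking functorial CW-approximations via the singular complex, so that all identifications are natural.
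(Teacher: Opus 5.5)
The paper does not prove this statement. It is quoted from \cite[Proposition II.1.21]{Rudyak} (credited there to Adams), so there is no internal argument to compare yours with. Your telescope (``spectrification'') construction is the standard proof, and its main line is sound. The identification $\pi_{d+n}(F_n)\cong\operatorname{colim}_j\pi_{d+n+j}(E_{n+j})=\pi_d(E)$, combined with the Whitehead theorem for spectra, is the right mechanism.

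Two points of bookkeeping need adjusting.

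\emph{Colimit commutation.} The stages $\Omega^jE_{n+j}$ are not CW complexes, and the bonding maps $\Omega^j\sigma'_{n+j}$ are not cellular inclusions. So the commutation of $\pi_*$ and of $\Omega$ with the colimit must be justified directly for the mapping telescope. A compact subset of a telescope of $T_1$ spaces lies in a finite subtelescope. This gives $\pi_i(\operatorname{Tel}X_\bullet)\cong\operatorname{colim}_j\pi_i(X_j)$, and it shows that the natural map $\operatorname{Tel}(\Omega X_\bullet)\to\Omega\operatorname{Tel}(X_\bullet)$ is a weak equivalence.

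\emph{Strictness.} Your proposed cure does not work as stated. Functorial CW approximation via the singular complex comes with a natural map $|SX|\to X$, not $X\to|SX|$. So it does not produce strict maps $E_n\to\Gamma F_n$; at best it gives a zigzag $E\leftarrow|SE|\to|SF|$, which suffices but needs its own verification.

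The cleanest repair is to skip CW approximation. Define $t_n'\colon F_n\to\Omega F_{n+1}$ as follows. Send the first cylinder $E_n\wedge[0,1]_+$ to its end through $\sigma_n'$. Send the $j$-th cylinder of $F_n$, for $j\ge1$, to loops in the $(j-1)$-st cylinder of $F_{n+1}$ via $(x,s)\mapsto\bigl(u\mapsto(x(u),s-1)\bigr)$. This has three consequences:
\begin{enumerate}
\item The identity $t_n'\circ f_n=\Omega f_{n+1}\circ\sigma_n'$ holds on the nose.
\item The $j$-fold iterate of $t'$, restricted to the $j$-th stage $\Omega^jE_{n+j}$ of $F_n$, is exactly $\Omega^jf_{n+j}$. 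This is what identifies your isomorphism $\pi_{d+n}(F_n)\cong\pi_d(E)$ with $f_*$.
\item By Milnor's theorem, $\Omega^jE_{n+j}$ has the homotopy type of a CW complex, hence so do the telescopes $F_n$ and their loop spaces. The weak equivalence $t_n'$ is therefore already a homotopy equivalence.
\end{enumerate}

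If you want CW-spectra in the sense of the footnote, apply the mapping-cylinder replacement of \cite[Lemma-Definition II.1.19]{Rudyak} at the end. Alternatively, keep CW approximations and define the structure maps of $\Gamma F$ and the maps $f_n$ up to homotopy, by lifting through weak equivalences. Then strictify inductively using the homotopy extension property of the subcomplex inclusions $\Sigma E_n\subset E_{n+1}$.
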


The $0^{th}$-space $E_0$ of an $\Omega$-spectrum $E$ is also called an \emph{infinite loop space} and denoted $\Omega^{\infty}E$, see \emph{cf.} \cite[Proposition-Definition II.3.27]{Rudyak} for some of its properties. In particular, if $F$ is a spectrum and $E$ is an $\Omega$-spectrum equivalent to it, then $\Omega^{\infty}F \simeq E_0$. We define the \emph{free infinite loop space} of a pointed space $X$ as $QX:=\Omega^\infty\Sigma^\infty X=\lim_{i\rightarrow \infty}\Omega^i\Sigma^i X,$
where the colimit is taken along the maps $\Omega^i\Sigma^iX\to\Omega^{i+1}\Sigma^{i+1}X$ obtained by applying $\Omega^i$ to the suspension map $\Sigma^iX\to\Omega\Sigma^{i+1}X$.

There is a construction which assigns to spectra $E,F$ a certain spectrum called the \emph{smash product} $E\wedge F$ of spectra. One can introduce $E\wedge F$ as generalization of the smash product $E\wedge X$ of a spectrum and a space. We refer to \cite{Adams} for the exact construction of the smash product of spectra and we recall here some properties that will be useful for us in the next sections.

The smash product $E\wedge F$ of spectra is a covariant functor of each of its arguments; denoting by $S$ the sphere spectrum, there are the following natural equivalences:
\begin{itemize}
\item $(E \wedge F) \wedge G \simeq E \wedge(F \wedge G), $
\item $E \wedge F \simeq F \wedge E,$ 
\item $S \wedge E \simeq E ,$
\item $\Sigma E \wedge F \simeq \Sigma(E \wedge F) .$
\end{itemize}
If $f: E \rightarrow F$ is an equivalence then the morphism $f \wedge id_G: E \wedge G \rightarrow F \wedge G$ is.

A spectrum $E$ is called $n$-\emph{connected} if $\pi_i(E)=0$ for $i \leq n$. A spectrum is called \emph{connected} if it is $-1$-connected. A morphism $\varphi: E \rightarrow F$ of spectra is called $n$-\emph{connected}, or an $n$-\emph{equivalence}, if $\varphi_*: \pi_i(E) \rightarrow \pi_i(F)$ is an isomorphism for $i < n$ and an epimorphism for $i=n$.

\begin{pro}[\protect{\cite[Proposition II.4.5]{Rudyak}}]\label{p:equivalences}
Let $E,F,G$ be spectra. The following hold.
\begin{itemize}
\item[\textit{(i)}] If $E$ is $m$-connected and $F$ is $n$-connected, then $E \wedge F$ is $m+n+1$-connected.
\item[\textit{(ii)}]  If $E$ is $m$-connected and $\varphi: F \rightarrow G$ is an $n$-equivalence, then $$id_E \wedge \varphi : E \wedge F \rightarrow E \wedge G$$ is an $m+n+1$-equivalence.
\item[\textit{(iii)}]  Given integers $N, k$, let $f: E \rightarrow F$ be a map of spectra such that $f_n: E_n \rightarrow F_n$ is an $n+k$-equivalence for every $n>N$. Then $f$ is a $k$-equivalence.
\end{itemize}
\end{pro}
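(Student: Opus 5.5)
The statement collects three standard facts of stable homotopy theory. The plan is to prove \textit{(iii)} directly from the colimit description of stable homotopy groups, then \textit{(i)} by a cellular argument, and finally to deduce \textit{(ii)} from \textit{(i)} using cofibre sequences.

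\textbf{Item (iii).} Recall from above that
\[
\pi_d(E)=\lim_{n\to\infty}\pi_{d+n}(E_n),
\]
a filtered colimit along the maps $(\sigma_n)_*\circ s$. Since $f$ is a map of spectra, the maps $(f_n)_*$ commute with the structure maps, so they induce a map of direct systems whose colimit is $f_*$. Fix $d<k$. For $n>N$ we have $d+n<n+k$, so $(f_n)_*\colon \pi_{d+n}(E_n)\to\pi_{d+n}(F_n)$ is an isomorphism. For $d=k$ the same maps are epimorphisms. A filtered colimit of isomorphisms is an isomorphism and a filtered colimit of epimorphisms is an epimorphism, and the terms with $n\le N$ do not affect the colimit. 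Hence $f$ is a $k$-equivalence.

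\textbf{Item (i).} Replace $E$ and $F$ by equivalent CW spectra; smash product and connectivity are invariant under equivalence. The key claim is that an $m$-connected CW spectrum is equivalent to one whose cells all have dimension $\geq m+1$. This is the stable analogue of the minimal cell structure of a connected CW complex. To build it, use the stable Hurewicz theorem, which gives $H_i(E)=0$ for $i\le m$, and attach cells inductively starting in dimension $m+1$ to kill homotopy, as in Rudyak's construction of CW spectra. Do the same for $F$, obtaining cells of dimension $\geq n+1$. In Adams' construction the smash product of CW spectra is a CW spectrum whose cells are smash products of cells, so a cell of dimension $p$ and one of dimension $q$ give a cell of dimension $p+q$. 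Every cell of $E\wedge F$ therefore has dimension $\ge m+n+2$. Cellular approximation, applied to maps $\Sigma^i S\to E\wedge F$, then gives $\pi_i(E\wedge F)=0$ for $i\le m+n+1$.

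\textbf{Item (ii).} Let $C_\varphi$ be the cofibre of $\varphi$. In spectra a cofibre sequence $F\to G\to C_\varphi$ yields a long exact sequence
\[
\cdots\to \pi_i(F)\to\pi_i(G)\to\pi_i(C_\varphi)\to\pi_{i-1}(F)\to\pi_{i-1}(G)\to\cdots.
\]
From this sequence, $\varphi$ is an $n$-equivalence if and only if $C_\varphi$ is $n$-connected. Smashing with $E$ preserves cofibre sequences, so $E\wedge F\to E\wedge G\to E\wedge C_\varphi$ is again a cofibre sequence. By \textit{(i)}, $E\wedge C_\varphi$ is $(m+n+1)$-connected. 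Reading the long exact sequence backwards shows that $id_E\wedge\varphi$ is an $(m+n+1)$-equivalence.

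\textbf{Main obstacle.} The only genuinely nontrivial input is the cellular step in \textit{(i)}. It needs both the existence of a CW model of $E$ with cells only in dimensions above its connectivity, and the cell structure on Adams' smash product. I would quote both from \cite{Adams, Rudyak} rather than reprove them.
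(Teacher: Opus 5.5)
Your proof is correct. The paper gives no proof of its own (it quotes the result from Rudyak's book), and your three steps are the standard argument: exactness of filtered colimits for \textit{(iii)}; a CW model with cells only above the connectivity, plus the dimension-additive cell structure of Adams' smash product, for \textit{(i)}; and the criterion ``$\varphi$ is an $n$-equivalence iff $C_\varphi$ is $n$-connected'' combined with \textit{(i)} applied to $E\wedge C_\varphi$ for \textit{(ii)}.

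One small tidy-up in \textit{(i)}: the stable Hurewicz theorem plays no role in producing the CW model. Run CW approximation starting from a wedge of $(m+1)$-spheres mapping onto generators of $\pi_{m+1}(E)$. Then attach cells of dimension $\geq m+2$ to kill kernels, and wedge on spheres to hit new generators. This works cleanly in spectra because cofibre sequences give long exact sequences in $\pi_*$.
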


A way to change coefficients in homology theories is to take smash products with Moore spaces. We say that a $CW$ complex $X$ is a \emph{Moore space} if $H_n(X,\Z_k) \simeq \Z_k$ and $\widetilde{H}_i(X)=0$ for all $i\neq n$. We denote it as $M(\Z_k,n)$. They are constructed starting with $S^n$ as $n$-skeleton and attaching an $n+1$-cell by a map of degree $k$. For example, when $n=1$ and $k=2$ we obtain $\mathbb{R}\mathbb{P}^2$. In particular, there exists a spectrum $M\Z_k$ defined uniquely up to equivalence by the following properties: $\pi_i(M\Z_k)=0$ for $i<0$; $\pi_0(M\Z_k)\simeq \Z_k \simeq H_0(M\Z_k)$; $H_i(M\Z_k)=0$ for $i\neq0.$ Such a spectrum is called \emph{Moore spectrum} of $\Z_k$.

Let $X$ be a space. We recall that there is a strong connection between spectra and (co)homology theories see \cite[II.3.22]{Rudyak}. Every spectrum $E=\{E_n\}_{n\in\N}$ yields a (generalized) homology theory $E_*(-)$ and cohomology theory $E^*(-)$ by the formulae $$E_i(X):= \lim_{n\rightarrow \infty}\pi_{i+n}(X \wedge E_n), \quad E^i(X):= \lim_{n\rightarrow \infty}\left[\Sigma^n X, E_{i+n}\right].$$ In particular, they are dual to each other; more precisely we have the following, \emph{cf.} \cite[Propositions II.3.23, II.3.25]{Rudyak}.

\begin{pro}\label{p:poincareduality}
For every spectrum $E$, the homology theory $E_*$ is dual to the cohomology theory $E^*$.
\end{pro}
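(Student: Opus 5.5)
We read ``dual'' in the sense of \cite[II.3]{Rudyak}, i.e.\ Spanier--Whitehead duality: for every finite spectrum $X$ with S-dual $DX:=F(X,S)$ there is an isomorphism $E_i(X)\cong E^{-i}(DX)$, natural in $X$ and compatible with suspension isomorphisms. (For a finite CW-complex we apply this to $\Sigma^\infty X$, writing $E_i(X)=\pi_i(\Sigma^\infty X\wedge E)$ and $E^j(Y)=[Y,\Sigma^jE]$, which agree with the colimit formulas above.) The strategy is to reduce everything to one natural comparison map of spectra and then check that it is an equivalence cell by cell.

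\textbf{Step 1: the comparison map.} Using the evaluation pairing $\mathrm{ev}\colon DX\wedge X\to S$, define
\[
\mu_X\colon X\wedge E\longrightarrow F(DX,E),
\]
adjoint to $DX\wedge X\wedge E\xrightarrow{\mathrm{ev}\wedge 1}S\wedge E\simeq E$. This map is natural in $X$ and commutes with suspension, because $D(\Sigma X)\simeq\Sigma^{-1}DX$.

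\textbf{Step 2: the adjunction computation.} Granting that $\mu_X$ is an equivalence, the adjunction between $\wedge$ and $F(-,-)$ gives
\[
E_i(X)=[\Sigma^iS,X\wedge E]\cong[\Sigma^iS,F(DX,E)]\cong[\Sigma^iS\wedge DX,E]=E^{-i}(DX).
\]
This is exactly the duality statement, and it inherits naturality from Step 1.

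\textbf{Step 3: $\mu_X$ is an equivalence for finite $X$.} Argue by induction on the number of cells, using the Whitehead theorem for spectra \cite[Theorem II.1.10]{Rudyak}.
\begin{itemize}
\item \emph{Base case.} For $X=\Sigma^nS$ we have $DX\simeq\Sigma^{-n}S$, and $\mu_X$ reduces to the canonical equivalence $\Sigma^nE\simeq F(\Sigma^{-n}S,E)$.
\item \emph{Inductive step.} For a cofibre sequence $A\to X\to\Sigma^nS$ obtained by attaching a cell, both sides of $\mu$ send it to cofibre sequences. On the left this is exactness of $-\wedge E$. On the right it uses that $D$ turns cofibre sequences into cofibre sequences (stably, fibre and cofibre sequences coincide) and that $F(-,E)$ is exact. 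Naturality of $\mu$ gives a map of the associated long exact sequences of homotopy groups, and the five lemma closes the induction.
\end{itemize}

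\textbf{Main obstacle.} The delicate point is not the algebra but the point-set model. One needs a smash product and function spectrum with a genuine adjunction $[A\wedge B,C]\cong[A,F(B,C)]$ and with $D$ exact; this is available in Adams' category \cite{Adams}, which is where I would work. Within that framework I would also check carefully the compatibility of $\mathrm{ev}$ with the cofibre sequences used in the induction.
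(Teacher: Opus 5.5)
Your proposal is correct. Reading ``dual'' as S-duality, $E_i(X)\cong E^{-i}(DX)$ naturally in finite $X$, your comparison map $X\wedge E\to F(DX,E)$, checked on spheres and extended by cell induction and the five lemma, is the standard proof of exactly the fact the paper invokes. The paper gives no argument of its own here, only the citation \cite[Propositions II.3.23, II.3.25]{Rudyak}, so what you supply is the standard proof behind that citation.

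One caveat lies outside the statement you were asked to prove. The paper's later use of this proposition in the proof of Theorem~\ref{t:criterionmodk}, namely $E_m(\mathcal{M})\cong E^n(\mathcal{M})$, is Poincar\'e duality. Beyond your S-duality statement it also needs Atiyah duality for $\mathcal{M}_+$ and a Thom isomorphism for the $E$-oriented normal bundle.
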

 
Finally, we have the following representability property. 
\begin{pro}\label{p:srepresentability}
If $E=\{E_n\}_{n\in\N}$ is an $\Omega$-spectrum, then for every space $X$ there is a natural equivalence $$\widetilde{E}^n(X)\simeq \left[X, E_n\right] \simeq \left[X, \Omega^{\infty}\Sigma^nE\right].$$
\end{pro}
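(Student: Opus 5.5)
The statement is the representability property: for an $\Omega$-spectrum $E=\{E_n,\sigma_n\}$ and every space $X$ there are natural bijections $\widetilde{E}^n(X)\simeq [X,E_n]\simeq[X,\Omega^\infty\Sigma^nE]$. The plan is to unwind the colimit defining $\widetilde{E}^n(X)$ and show that it stabilizes already at the first stage, because the structure maps are equivalences.

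\textbf{Step 1: identify the colimit.} By definition,
\[
\widetilde{E}^n(X)=\lim_{j\to\infty}[\Sigma^jX,E_{n+j}].
\]
The transition maps of this direct system are
\[
[\Sigma^jX,E_{n+j}]\to[\Sigma^{j+1}X,E_{n+j+1}],\qquad f\mapsto\sigma_{n+j}\circ\Sigma f .
\]
Using the adjunction $[\Sigma A,B]\cong[A,\Omega B]$, this transition map corresponds to
\[
[\Sigma^jX,E_{n+j}]\xrightarrow{(\sigma'_{n+j})_*}[\Sigma^jX,\Omega E_{n+j+1}]\cong[\Sigma^{j+1}X,E_{n+j+1}].
\]
The one thing to check is that adjunction carries $f\mapsto\sigma_{n+j}\circ\Sigma f$ to $f\mapsto\sigma'_{n+j}\circ f$; this is naturality of the unit $A\to\Omega\Sigma A$.

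\textbf{Step 2: all transition maps are bijections.} Since $E$ is an $\Omega$-spectrum, each $\sigma'_{n+j}$ is a homotopy equivalence. Postcomposition with a homotopy equivalence is a bijection on pointed homotopy classes (use a homotopy inverse). Hence every transition map is bijective. The colimit of a directed system of bijections is canonically isomorphic to its first term, so
\[
\widetilde{E}^n(X)\cong[X,E_n].
\]
For $n<0$ one starts the system at $j=-n$. Alternatively, one defines $E_n:=\Omega^{-n}E_0$, and the same argument applies verbatim.

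\textbf{Step 3: naturality and the second identification.} Naturality in $X$ follows because precomposition with a map $g\colon Y\to X$ commutes with $\Sigma$, with the adjunction, and with postcomposition. For the second bijection, take $F=\Sigma^nE$, which is again an $\Omega$-spectrum with $F_0=E_n$. The definition $\Omega^\infty F\simeq F_0$ recalled earlier then gives $[X,\Omega^\infty\Sigma^nE]\cong[X,E_n]$. If $E$ is only equivalent to an $\Omega$-spectrum, replace it by that spectrum using Adams' theorem; this is harmless, since equivalent spectra give isomorphic cohomology theories.

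\textbf{Main obstacle.} The only mildly delicate point is the compatibility check in Step 1, namely that the adjoint of $\sigma_{n+j}\circ\Sigma f$ is $\sigma'_{n+j}\circ f$. If the abstract naturality argument is unconvincing, I would verify it directly by writing out the adjunction on representatives.
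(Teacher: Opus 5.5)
The paper gives no proof of this proposition; it is quoted as a standard fact from Rudyak \cite{Rudyak}. Your argument is the standard proof and it is correct.

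With the paper's definition $\widetilde{E}^n(X)=\lim_j[\Sigma^jX,E_{n+j}]$, the adjunction identifies each transition map with postcomposition by $\sigma'_{n+j}$. For an $\Omega$-spectrum every transition map is therefore a bijection, so the colimit is its first term $[X,E_n]$. The second identification also holds: $\Omega^\infty\Sigma^nE=(\Sigma^nE)_0=E_n$, since $\Sigma^nE$ is again an $\Omega$-spectrum.

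A few minor points need tightening:
\begin{itemize}
\item Postcomposition with $\sigma'_{n+j}$ is bijective on \emph{pointed} classes because a homotopy equivalence between well-pointed spaces, such as $E_{n+j}$ and $\Omega E_{n+j+1}$, is a pointed one. If $\Omega$-spectra are defined with weak equivalences instead, you must restrict to CW complexes $X$.
\item If you want an isomorphism of groups rather than a natural bijection, note that $[\Sigma A,B]\cong[A,\Omega B]$ exchanges the suspension and loop group structures, so the transition maps are homomorphisms.
\item Your closing appeal to Adams' theorem is unnecessary under the hypothesis. With the naive colimit definition of $E^*$ it is also not automatic for infinite complexes $X$, since the colimit need not commute with the infinite products coming from infinite wedges. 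It is best dropped.
\end{itemize}
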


{\bf Borel-Moore homology}

The Borel-Moore homology $H_*^{BM}(M,\Z)$ of a space $M$ is a notion of homology for locally compact spaces like noncompact manifolds introduced by Borel and Moore in \cite{BorelMoore} from a sheaf-theoretic viewpoint. Geometrically it can be thought of as constructed from possibly infinite singular chains with locally finite supports (hence, it is also known as the homology with closed supports or locally finite homology). If $M$ is compact, $H_*^{BM}(M,\Z)$ is isomorphic to the ordinary singular homology $H_*(M,\Z)$. A useful property is that an $m$-dimensional submanifold $\Sigma$ in a manifold $M$ determines a fundamental class $[\Sigma] \in H_m^{BM}(M,\Z)$, even when $\Sigma$ is not compact\footnote{For example, if $\Sigma$ admits a triangulation, then its fundamental class is represented by the sum of all the top dimensional simplices.}. Moreover, if $M$ is an oriented $m+n$-dimensional manifold, then Poincaré duality holds: $H_m^{BM}(M,\Z)$ is Poincaré dual to the singular cohomology $H^{n}(M,\Z)$ and a Thom representability criterion holds for noncompact manifolds using Borel-Moore homology. Let us recall a few basic properties in the case of $\Z$-coefficients, but the notion is much more general and is valid for $\Z_k$-coefficients and fundamental classes of $\Z_k$-submanifolds; we refer to \cite[Chapter 19]{Fulton} for more details (see also \cite[Chapter 2]{Cattalani}).

Let $C_i^{BM}(M)$ be the abelian group of (infinite) sums $$u=\sum_\sigma a_\sigma \sigma,$$ where $\sigma$ runs over the set of all continuous maps from the standard $i$-simplex $\Delta^i$ to $X$ and each $a_\sigma$ is an integer, such that for each compact subset $K$ of $M$, we have $a_\sigma \neq 0$ for only finitely many $\sigma$ whose image meets $K$. Then the usual definition of the boundary $\partial$ of a singular chain makes these abelian groups into a chain complex: $$ \cdots \rightarrow C_2^{BM}(X) \rightarrow C_1^{BM}(X) \rightarrow C_0^{BM}(X) \rightarrow 0 .$$

The Borel-Moore homology groups $H_i^{BM}(M)$ are the homology groups of this chain complex, that is

$$
H_i^{BM}(X)=\operatorname{ker}(\partial: C_i^{BM}(X) \rightarrow C_{i-1}^{BM}(X)) / \operatorname{im}(\partial: C_{i+1}^{BM}(X) \rightarrow C_i^{BM}(X)) .
$$

If $M$ is compact, then every locally finite chain is in fact finite and hence Borel-Moore homology $H_i^{BM}(X)$ coincides with the ordinary singular homology $H_i(M)$. Borel-Moore homology is a covariant functor with respect to proper maps. That is, a proper map $f: M \rightarrow N$ induces a pushforward homomorphism $H_i^{BM}(M) \rightarrow H_i^{BM}(N)$ for all integers $i$. Moreover, it is a contravariant functor with respect to inclusions of open subsets. That is, for $U$ open in $M$, there is a restriction homomorphism $H_i^{BM}(M) \rightarrow H_i^{BM}(U)$.

For any locally compact space $M$ and any closed subset $K$, with $U=M \backslash K$ the complement, there is a long exact sequence, \emph{cf.} \cite[19.1 (6)]{Fulton}:

\begin{equation}\label{e:les}
\cdots \rightarrow H_i^{BM}(K) \rightarrow H_i^{BM}(M) \rightarrow H_i^{BM}(U) \rightarrow H_{i-1}^{BM}(K) \rightarrow \cdots
\end{equation}

Poincaré duality extends to noncompact manifolds using Borel-Moore homology. Namely, for an oriented $m+n$-dimensional manifold $M$, Poincaré duality is an isomorphism from singular cohomology to Borel-Moore homology,
$$
H^i(X) \simeq H_{n-i}^{B M}(X),
$$
for all integers $i$. 

Borel-Moore homology is usually employed to define fundamental classes for arbitrary (possibly singular) spaces, see \cite[Lemma 19.1.1]{Fulton}: we will employ it to work with $m$-dimensional homology classes of $M \setminus K$, where $M$ is a smooth closed oriented manifold of dimension $m+n$ and $K$ will either denote the $m-2$ or the $m-5$-dimensional skeleton of a smooth triangulation $\mathcal{K}$ of $M$. We remark that $K$ has codimension at least 2 in the $m$-dimensional class and, by the long exact sequence in \eqref{e:les}, the $m$-dimensional homology classes of $M$ and $M\setminus K$ are isomorphic.

Finally, we recall a Thom criterion valid for noncompact manifolds, see \cite[Théorème II.1']{Thom54} for an even more general version (we can simply choose the family $(\Phi)$ of support sets therein to be the one of all closed subsets).

\begin{thm}
Let $\mathcal{M}$ be an oriented smooth manifold without boundary, possibly noncompact, and let $\tau \in H^{BM}_{m}(\mathcal{M},\mathbb Z)$ an integral homology class. Then $\tau$ is represented by a properly embedded oriented smooth submanifold $i:\Sigma \rightarrow \mathcal{M}$ with $i_*[\Sigma]_{BM}=\tau$ if and only if there exists a map $f :\mathcal{M} \rightarrow MSO(n)$ pulling back the Thom class to the Poincaré dual of $\tau$, \emph{i.e.} $f^*(u)=x$, where $x \in H^{n}(\mathcal{M},\Z)$.
\end{thm}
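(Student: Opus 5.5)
The plan is to follow Thom's classical argument, with Borel--Moore homology used in place of ordinary homology and proper maps in place of compact ones. We prove the two directions separately. Throughout, $x\in H^n(\mathcal{M},\Z)$ denotes the Poincaré dual of $\tau$ under the isomorphism $H^n(\mathcal{M})\simeq H^{BM}_m(\mathcal{M})$ recalled above.

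\emph{Necessity.} Suppose $i:\Sigma\to\mathcal{M}$ is a properly embedded oriented submanifold with $i_*[\Sigma]_{BM}=\tau$.
\begin{enumerate}
\item Since $\Sigma$ is closed in $\mathcal{M}$, it has a tubular neighborhood $U\cong \nu\Sigma$.
\item The orientations of $\Sigma$ and $\mathcal{M}$ orient the normal bundle, which is therefore classified by a map $g:\Sigma\to BSO(n)$ covered by a bundle map $\nu\Sigma\to\gamma^n$.
\item The Pontryagin--Thom collapse $\mathcal{M}\to \mathcal{M}/(\mathcal{M}\setminus U)\cong \mathrm{Th}(\nu\Sigma)\to MSO(n)$ gives the required map $f$. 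Note that this needs no compactness: the complement of the open tubular neighborhood is simply sent to the basepoint.
\item It remains to check that $f^*u$ is the Thom class of $\nu\Sigma$, extended by zero, and that this equals the Poincaré dual of $i_*[\Sigma]_{BM}$. This is the usual local computation. The only new point is to use the compactly supported/closed-support pairing: one checks that for each compactly supported class, capping with $[\mathcal{M}]$ pairs with $f^*u$ as it pairs with $[\Sigma]_{BM}$.
\end{enumerate}

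\emph{Sufficiency.} Given $f:\mathcal{M}\to MSO(n)$ with $f^*u=x$, we proceed as follows.
\begin{enumerate}
\item Since $\mathcal{M}$ is finite dimensional and locally compact, $f$ factors up to homotopy through a finite-dimensional approximation $MSO(n,N)=\mathrm{Th}(\gamma^n_N\to \widetilde{G}_n(\R^{n+N}))$, by cellular approximation.
\item We homotope $f$ to be smooth near $f^{-1}(BSO(n,N))$ and transverse to the zero section. Because $\mathcal{M}$ may be noncompact, this is done with a locally finite, countable sequence of local perturbations: exhaust $\mathcal{M}$ by compacts $K_j$ and use the relative transversality theorem on $K_{j+1}\setminus \mathrm{int}K_{j-1}$ with perturbations small enough to converge, so the homotopy class is unchanged.
\item Set $\Sigma=f^{-1}(\text{zero section})$. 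It is a closed subset of $\mathcal{M}$ and an embedded submanifold of codimension $n$, hence properly embedded. Its normal bundle is the pullback of $\gamma^n$, hence oriented, and $\Sigma$ is oriented compatibly.
\item Running the necessity computation backwards, $f$ is homotopic to the collapse map of $\Sigma$. Hence the Poincaré dual of $i_*[\Sigma]_{BM}$ is $f^*u=x$, and so $i_*[\Sigma]_{BM}=\tau$.
\end{enumerate}

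The main obstacle is the noncompact bookkeeping. First, transversality must be achieved globally by an infinite but locally finite perturbation without leaving the homotopy class. Second, one must verify that Borel--Moore Poincaré duality identifies the fundamental class of a noncompact closed submanifold with the pulled-back Thom class. For the latter, I would reduce to the compact case by working on relatively compact open sets $V\subset\mathcal{M}$. There I would use naturality of the restriction $H^{BM}_m(\mathcal{M})\to H^{BM}_m(V)$ together with the corresponding duality $H^n(\mathcal{M})\to H^n(V)$, and conclude via a $\lim^1$-free inverse limit argument (Milnor sequence) over an exhaustion. Alternatively, this whole part can simply be cited as the special case of \cite[Théorème II.1']{Thom54} with the family of all closed supports.
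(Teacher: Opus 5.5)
The paper gives no proof of this statement. It quotes it as the special case of \cite[Th\'eor\`eme II.1']{Thom54} with the family of all closed supports, which is exactly your fallback, and your Pontryagin--Thom outline is Thom's own argument. The collapse and transversality steps are fine: the closed tubular neighbourhood, $\mathcal{M}/(\mathcal{M}\setminus U)\cong D\nu/S\nu$, compression into a finite Grassmannian Thom space, transversality in the strong topology, and properness of the preimage of the zero section.

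What does not work is the one step you single out as new, the identification $[\mathcal{M}]_{BM}\cap f^*u=i_*[\Sigma]_{BM}$. Neither of your two devices proves it with $\mathbb{Z}$ coefficients.
\begin{itemize}
\item Pairing with $H^m_c(\mathcal{M};\mathbb{Z})$ only sees the image of a class in $\mathrm{Hom}(H^m_c(\mathcal{M}),\mathbb{Z})$, so it is blind to torsion. For $\mathcal{M}=\mathbb{RP}^3$ and $m=1$ one has $H^1(\mathbb{RP}^3;\mathbb{Z})=0$ while $H_1=\mathbb{Z}_2$. Your check therefore cannot distinguish $\mathbb{RP}^1$ from the empty submanifold, and torsion is precisely the regime in which this criterion matters.
\item The exhaustion argument is not $\lim^1$-free in general. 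The kernel of $H^{BM}_m(\mathcal{M})\to\lim_j H^{BM}_m(V_j)$ is $\lim^1_j H^{n-1}(V_j)$, which can be nonzero. Take $\mathcal{M}$ the complement in $S^3$ of a dyadic solenoid, with $m=1$, $n=2$, and $V_j$ the complements of the defining solid tori. Then $H^1(V_j)\cong\mathbb{Z}$ with bonding maps multiplication by $2$, and $H^2(V_j)=0$. So $H^{BM}_1(\mathcal{M})\cong\widehat{\mathbb{Z}}_2/\mathbb{Z}$ (the $2$-adic integers modulo $\mathbb{Z}$), and every one of its classes vanishes on every $V_j$.
\end{itemize}

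The repair is to prove the identity on $\Sigma$, where it becomes local.
\begin{enumerate}
\item Transversality gives $f(\mathcal{M}\setminus\Sigma)\subset MSO(n)\setminus BSO(n)$. Hence $f^*u$ is the image of a class $\tilde u\in H^n(\mathcal{M},\mathcal{M}\setminus\Sigma)\cong H^n(D\nu,D\nu\setminus\Sigma)$ that restricts to the positive generator on each fibre. So $\tilde u$ is the Thom class $u_\nu$; in the necessity direction this holds by construction.
\item Use the cap product with supports $H^n(\mathcal{M},\mathcal{M}\setminus\Sigma)\otimes H^{BM}_{m+n}(\mathcal{M})\to H^{BM}_m(\Sigma)$ of \cite[19.1]{Fulton}. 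It commutes with restriction to open sets and satisfies $[\mathcal{M}]_{BM}\cap f^*u=i_*\bigl([\mathcal{M}]_{BM}\cap u_\nu\bigr)$.
\item $H^{BM}_m(\Sigma)$ is top-degree Borel--Moore homology of an $m$-manifold, i.e.\ global sections of its orientation sheaf. So $[\mathcal{M}]_{BM}\cap u_\nu$ is determined by its restrictions to charts $W\times\mathbb{R}^n$ of the tubular neighbourhood. There the product formula gives $[W]$ for the compatible orientation.
\end{enumerate}
Hence $[\mathcal{M}]_{BM}\cap u_\nu=[\Sigma]_{BM}$. This proves both directions with no exhaustion and with torsion accounted for.
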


\subsection{$\Z_k$-smooth manifolds}\label{s:Zkmanifolds}

We now recall some basic notions of the theory of $\Z_k$-manifolds introduced by Morgan and Sullivan \cite{SullivanHaup, MorSul, MITnotes}. Such objects can be defined in the category of smooth, PL, or topological manifolds: we always assume them to be $C^{\infty}$-smooth.

\begin{definition}
A closed $m$-dimensional $\Z_k$-\emph{manifold} (or $\Z_k$-\emph{smooth manifold}) is given by the triple $\overline{M}=(M,\delta M,\theta_i)$ where:  
\begin{itemize}
\item[\textit{i)}] $M$ is a compact oriented smooth $m$-manifold, with boundary $\partial M$;
\item[\textit{ii)}] $\delta M$ is a compact oriented smooth $m-1$-manifold without boundary, called the \emph{Bockstein};
\item[\textit{iii)}] $\theta_i:\delta M \rightarrow \partial M$, for $i\in\{1,...,k\}$, are $k$ disjoint embeddings such that we have an orientation-preserving diffeomorphism $\partial M=\bigsqcup_{i\in\{1,...,k\}}\theta_i(\delta M)$.
\end{itemize}
\end{definition}

There is an associated quotient space $M_k$ given by the identification on $M$ of the $k$ copies of $\delta M$ via the embeddings $\theta_i$'s. With a slight abuse of notation we will often refer to the latter quotient (or to the couple $(M, \delta M)$) when dealing with $\Z_k$-manifolds. We will always assume without further mention that the sheets are labeled $\{1,...,k\}$ near the Bockstein $\delta M$.

\begin{example}[$\mathbb{Z}_k$-spheres]\label{e:Zksphere}
Let $M$ be the $n$-sphere $S^n$ with $k$ disjoint open disks removed. Then $M$ identifies with a $\mathbb{Z}_k$-manifold with $\delta M=S^{n-1}$, which we call the $\mathbb{Z}_k-n$-sphere and we denote it by $S_k^n$.
\end{example}

Note that in particular any closed oriented manifold is a (nonsingular) $\Z_k$-manifold with empty Bockstein $\delta M$. 

\begin{remark}\label{r:Z2}
It is important to remark that if $(M,\delta M)$ is $\Z_2$-manifold with nonempty Bockstein then the quotient $M_2$ has the structure of a non-orientable manifold, but not every non-orientable manifold is a $\Z_2$-manifold. In particular, a $\Z_2$-manifold $M_2$ has the additional geometric structure that $M_2$ is oriented away from $\delta M$, and the orientation flips across it, so that the first Stiefel-Whitney class $w_1(M_2)$ is the Poincaré dual of $\delta M$ and hence the mod 2 reduction of an integral cohomology class\footnote{This class of objects is also known as \emph{Wall manifolds}, \emph{i.e.} manifolds for which the first Stiefel-Whitney class is the mod 2 reduction of an integral class.}. This is reflected in the different notions of classifying spaces for embeddings of such manifolds. For general unoriented manifolds it is the Thom space of the universal $n$-plane bundle, while, for $\Z_2$-manifolds, it is the (corresponding finite level space of the) smash product $MSO \wedge M\Z_2$, which provides more structure on the normal bundles. For instance, a Klein bottle is an example of $\mathbb{Z}_2$-manifold, while $\mathbb{R}\mathbb{P}^2$ is instead a nonorientable manifold that cannot be given the structure of a $\Z_2$-manifold, since $w_1(\mathbb{R}\mathbb{P}^2)\neq 0$ but $H^1(\mathbb{R}\mathbb{P}^2,\Z)=0$.
\end{remark}

It is possible to define the notion of a $\Z_k$-manifolds with boundary. 

\begin{definition}
An $m$-dimensional $\Z_k$-\emph{manifold with boundary} is given by the triple $\overline{M}=(M,\delta M,\psi_i)$ where:
\begin{itemize}
\item[\textit{i)}] $M$ is a compact $m$-dimensional manifold, with boundary $\partial M$;
\item[\textit{ii)}] $\delta M$ is a compact $m-1$-dimensional manifold (the Bockstein) \emph{with boundary} $\partial\delta M$;
\item[\textit{iii)}] $\psi_i:\delta M \rightarrow \partial M $, with $i\in \{1,...,k\}$, are $k$ disjoint embeddings such that the triple 
$$ \left(\partial M-\text{int}\big( \bigsqcup_{i\in\{1,...,k\}} \psi_i(\delta M)\big),\partial \delta M,{\psi_i}|_{\partial \delta M}\right)$$ defines a closed $m-1$-dimensional $\Z_k$-manifold $(N,\delta N,\theta_i)$. 
\end{itemize}
\end{definition}
The closed $m-1$-dimensional $\Z_k$-manifold $(N,\delta N,\theta_i)$ is called the $\Z_k$-\emph{boundary} of the $\Z_k$-manifold with boundary $\overline{M}$, and is denoted by $\partial \overline{M}=(N,\delta N,\theta_i)$. Again, there is the quotient space $M_k$ resulting after the identification on $M$ of the $k$ embedded copies of $\delta M$ via the $\psi_i$'s.

From this description one can see that closed $\Z_k$-manifolds have fundamental homology classes with $\Z_k$-coefficients and $\Z_k$-manifolds with boundary have a relative fundamental class. Thus they serve as geometric representatives of $\Z_k$-homology in any space. 

A map between $\Z_k$-manifolds is a map between the respective quotient spaces, which sends the singularity (i.e. the identified images $\theta_i(\delta M)$) to the singularity, and preserves the cone-on-$k$ points structure (with the labeling) near the singularity. Thus embeddings have normal bundles, and we have a theory of transversality, see \cite[Chapter 1]{MorSul}. Moreover, it is possible to define a notion of $\Z_k$-\emph{maps} from $\Z_k$-manifolds to smooth manifolds, passing through the quotient $\Z_k$-manifold; we refer to \cite{Oregon} for the rigorous definition of them.

Indeed, we give a geometric definition of the tangent bundle of a $\Z_k$-smooth manifold $(M,\delta M)$. Embed $\delta M$ into some $\R^N$ for some large $N$. Embed the cone over $k$ points $C(k)$ in $\R^2$ so that all lines are tangent at the cone point in a cusp-like manner (see \cite[page 474]{MorSul}). The product of these two embeddings, gives one of $\delta M \times C(k) \rightarrow \R^{N+2}$. Its boundary is $\delta M \times k=\partial \overline{M}$. Extend this embedding to one of $M$. Together these embeddings define one of the quotient $M_k \rightarrow \R^{N+2}$. The tangent bundle is the set of tangent planes with the natural topology and it is well-defined since all planes over the Bockstein agree. We denote this bundle by $\tau_M$. Taking perpendicular planes gives the normal bundle $\nu_M$. We remark that it is possible to alternatively define the tangent bundle just by taking the one of $M$, denoted $\tau_M$, and use $d\varphi_{i,j}$ to identify $\tau_M|_{\delta M_i}$ with $\tau_M|_{\delta M_j}$, $\varphi_{i,j}$ being the isomorphisms between the $i^{th}$ and $j^{th}$ copy of $\partial M$.

An $m$-dimensional \emph{singular $\Z_k$-manifold} in a space $X$ is a closed oriented smooth $m$-dimensional $\Z_k$-manifold $\overline{M}=(M,\delta M,\theta_i)$ together with a continuous map $f:M\rightarrow X$ such that $f\circ\theta_i=f\circ\theta_j$ for $i,j\in \Z_k$. The $\Z_k$-bordism group $\Omega_m(X, \Z_k)$ is given by the equivalence classes of $m$-dimensional singular $\Z_k$-manifolds $(\overline{M},f)$ under the clear $\Z_k$-bordism relation; that is, a singular $\Z_k$-manifold $(M , \delta M, \theta_i, f : M \rightarrow X)$ represents a class in $\Omega_m(X, \Z_k)$.

In \cite{BRS} the authors showed that every generalized homology theory is a (co)bordism theory of a suitable class of manifolds \emph{with singularities}. In particular, for a spectrum $E$, we call its corresponding homology theory the (co)bordism theory of manifolds with $X$-singularity; moreover, if $E,F$ are spectra, then $E\wedge F$-singularity combines $E$-singularity and $F$-singularity. For instance, $MSO \wedge M\Z_k$ corresponds to the (co)bordism theory of (oriented) $\Z_k$-smooth manifolds; analogously, the Moore spectrum $M\Z_k$ for $\Z_k$ corresponds to the (co)bordism theory of framed $\Z_k$-manifolds, which is easy to see since the Thom spectrum representing framed (co)bordism is the sphere spectrum $S$.

We will need to consider embeddings of a $\Z_k$-manifold into a smooth manifold. For what follows, let $\Sigma$ be a $\Z_k$-manifold and let $M$ be a smooth manifold. A $\Z_k$-map $f: \Sigma \rightarrow M$ is said to be a $\Z_k$-\emph{immersion} if it is an immersion when considering $\Sigma$ as a smooth manifold with boundary. Additionally, a $\Z_k$-immersion $f: \Sigma \rightarrow M$ is said to be a $\Z_k$-\emph{embedding} if the induced map on the quotient $\Z_k$-manifold $\Sigma_k$ is an embedding.

A suitable variant of Thom's criterion applies also to the mod $k$ realizability problem.

\begin{thm}\label{t:criterionmodk}
Let $\mathcal{M}$ and $\tau$ be as in Assumption \ref{a:1}, and $n>m+1$. Then a mod $k$ homology class $\tau \in H_m(\mathcal{M},\mathbb{Z}_k)$ is realizable by an embedded $\mathbb{Z}_k$-smooth manifold $N$ if and only if there exists a map $f:\mathcal{M} \rightarrow \Omega^{\infty}\Sigma^n(MSO \wedge M\Z_k)$ such that it pulls back the $\mathbb{Z}_k$ Thom class to the Poincaré dual of $\tau$. 
\end{thm}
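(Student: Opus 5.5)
We would follow the classical Pontryagin--Thom scheme, adapted to $\mathbb{Z}_k$-singularities, and reduce everything to a concrete Thom space. The first step is to replace the abstract infinite loop space by a geometric model. In the range $n>m+1$ we have $\dim\mathcal{M}=m+n<2n-1$. By the Freudenthal suspension theorem (Theorem \ref{t:Freudenthal}) together with Proposition \ref{p:equivalences}, the natural map
\[
\Sigma^{n-1}\bigl(MSO(n-1)\wedge M(\mathbb{Z}_k,1)\bigr)\to\Omega^{\infty}\Sigma^n(MSO\wedge M\mathbb{Z}_k)
\]
(adjoint of the structure maps) is therefore highly connected, more than $m+n$. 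Here $MSO(n-1)\wedge M(\mathbb{Z}_k,1)$ is $(n-1)$-connected, so suspension is an equivalence through degree about $2n$. We choose the correct level, namely the $n$-th space $MSO(n-1)\wedge M(\mathbb{Z}_k,1)$, since $M(\mathbb{Z}_k,1)$ carries the shift. Homotopy classes of maps $\mathcal{M}\to\Omega^\infty\Sigma^n(MSO\wedge M\mathbb{Z}_k)$ then correspond bijectively, preserving the pulled-back $\mathbb{Z}_k$ Thom class, to homotopy classes of maps $\mathcal{M}\to X_n:=MSO(n-1)\wedge M(\mathbb{Z}_k,1)$. We may thus work with $X_n$.

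\emph{Necessity.} Given an embedded $\mathbb{Z}_k$-manifold $N=(M,\delta M)$ with normal bundle $\nu$, we build the collapse map as follows. A tubular neighborhood of the Bockstein stratum looks like $D(\nu|_{\delta M})\times C(k)$. Away from it, the neighborhood is $D(\nu|_M)$. Collapsing the complement, we map each sheet's normal disc to $MSO(n)$-type data. We write $M(\mathbb{Z}_k,1)=S^1\cup_k e^2$ and send the cone $C(k)$ together with the collar to the $2$-cell and the sphere in the cusp-compatible way. This yields $f:\mathcal{M}\to X_n$. To identify $f^*u$ with $PD(\tau)$, we check it on the fundamental $\mathbb{Z}_k$-class, using that the mod $k$ Thom class of $X_n$ is the product of the Thom class of $MSO(n-1)$ and the generator of $\widetilde H^1(M(\mathbb{Z}_k,1);\mathbb{Z}_k)$. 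This is the standard computation: it holds locally near a regular sheet, and the sheets glue along $\delta M$ exactly because the $k$ incoming sheets sum to zero mod $k$.

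\emph{Sufficiency.} This is the two-stage transversality, and it is the main obstacle. Given $f:\mathcal{M}\to X_n$, we note that $X_n$ is a quotient of the Thom space of $\gamma^{n-1}\boxtimes$ (cells of $M(\mathbb{Z}_k,1)$). The argument proceeds in two stages.
\begin{enumerate}
\item \emph{Stage one.} We make $f$ transverse to the zero section $BSO(n-1)\times\{\text{cone point of the }2\text{-cell}\}$, which is the deepest stratum. Its preimage is an embedded closed oriented $(m-1)$-manifold $B$, the future Bockstein, with a normal structure $\nu_B\cong\xi^{n-1}\oplus\mathbb{R}^2$ pulled back from the $2$-cell.
\item \emph{Stage two.} Keeping $f$ fixed near $B$, we make $f$ transverse, on the complement, to $BSO(n-1)\times$(a regular point of $S^1$). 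Locally this looks like $BSO(n)$. Near $B$, the preimage is $B\times$ (preimage of a point of $S^1$ under the degree-$k$ attaching map restricted to the link circle), that is, $B$ times $k$ rays.
\end{enumerate}
The difficulty is to arrange stage two relatively, so that near $B$ the preimage is exactly $B\times C(k)$ with consistently labelled sheets and the cusp-compatible tangent structure. We will handle this by first homotoping $f$ near $B$ to a standard normal form, determined by the bundle data via the homotopy extension property, and then applying relative Thom transversality on $\mathcal{M}\setminus U(B)$.

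The resulting preimage is a $\mathbb{Z}_k$-submanifold $N$. Its mod $k$ fundamental class is Poincaré dual to $f^*u$ by naturality of the Thom class, which reverses the computation in the necessity part, so $N$ represents $\tau$. Finally, embeddedness, as opposed to mere immersion, of the quotient uses $n>m+1$ to separate any accidental intersections of the sheets by general position.
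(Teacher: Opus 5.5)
Your sufficiency direction is essentially the paper's argument. You lift through the $(2n-1)$-connected map $g\colon MSO(n-1)\wedge M(\mathbb{Z}_k,1)\to\Omega^\infty\Sigma^n(MSO\wedge M\mathbb{Z}_k)$ of Lemma \ref{l:MSOtoOmega}; the source is $X_n$ itself, not $\Sigma^{n-1}X_n$ as in your display, and its connectivity needs the Thom-spectrum comparison there, not only Freudenthal. You then make $f$ transverse to $BSO(n-1)\times\{\hat{e}_2\}$ and push the complement of a tubular neighbourhood $U$ of $B$ into $MSO(n-1)\wedge S^1$. Next you make it transverse to $BSO(n-1)\times\{\hat{e}_1\}$ rel $U$, and cone off inside $U$ along the $k$ radial segments. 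Your closing appeal to general position is superfluous. The sheets are transverse preimages of sets that are disjoint away from $\hat{e}_2$, so they are automatically embedded and meet only along $B$. In this direction, $n>m+1$ enters only through the lift.

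The genuine gap is in the necessity direction. You collapse directly into $X_n=MSO(n-1)\wedge M(\mathbb{Z}_k,1)$. Away from the $2$-cell, however, this space is $MSO(n-1)\wedge S^1=\mathrm{Th}(\gamma^{n-1}\oplus\varepsilon)$, not $MSO(n)$. So ``mapping each sheet's normal disc to $MSO(n)$-type data'' does not produce a map into $X_n$. A Pontryagin--Thom map into $X_n$ exists only under two conditions. First, the normal bundle of the sheets must split as $\xi^{n-1}\oplus\varepsilon$. Second, near the Bockstein one needs $\nu_{\delta M}\cong\xi^{n-1}\oplus\varepsilon^2$, with the $k$ sheets leaving $\delta M$ radially inside the $\varepsilon^2$ factor and the trivial lines given by the rotation direction there. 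This is exactly the restriction flagged in Remark \ref{r:important}, and an arbitrary $\mathbb{Z}_k$-embedding does not come with it.

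In your range the missing step can be supplied by obstruction theory:
\begin{itemize}
\item a $2$-frame in $\nu_{\delta M}$ exists, since $V_2(\mathbb{R}^{n+1})$ is $(n-2)$-connected and $\dim\delta M=m-1<n-1$;
\item the germ of the sheets along $\delta M$ can be isotoped into radial position in that plane, since the space of $k$ distinct directions in $S^n$ is $(n-2)$-connected and $m-1\le n-2$;
\item the trivial line on the collar extends over each sheet, since the obstruction lies in $H^n(M,\partial M;\mathbb{Z})=0$.
\end{itemize}

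The paper sidesteps all of this by staying stable. It sends $[N\hookrightarrow\mathcal{M}]\in\Omega_m(\mathcal{M},\mathbb{Z}_k)$ through the Morgan--Sullivan map to $(MSO\wedge M\mathbb{Z}_k)_m(\mathcal{M})$. It then uses duality and representability (Propositions \ref{p:poincareduality} and \ref{p:srepresentability}) to land directly in $[\mathcal{M},\Omega^\infty\Sigma^n(MSO\wedge M\mathbb{Z}_k)]$, which is all the statement asks for. Equivalently, you could perform your collapse in $\mathcal{M}\times\mathbb{R}^\ell$, where there is room to standardize the normal data, and then take adjoints.
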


\begin{remark}\label{r:important}
Before coming to the proof of Theorem \ref{t:criterionmodk} we comment on the target space $\Omega^{\infty}\Sigma^n(MSO \wedge M\Z_k)$, which is the $n^{th}$-space of the $\Omega$-spectrum equivalent to $MSO \wedge M\Z_k$. No transversality can be applied directly from this space, since it is not the Thom of some vector bundle (unless after sufficiently many suspensions), but the stability condition $n\geq m+1$ allows us to exploit the $2n-1$-equivalence of the map $g: MSO(n-1) \wedge M(\Z_k,1) \rightarrow \Omega^{\infty}\Sigma^n(MSO \wedge M\Z_k)$ to be always able to find a lift from the latter space to the former, see Lemma \ref{l:MSOtoOmega} below. This $2n-1$-equivalence is also the reason why in dealing with $\mathcal{M}\setminus \mathcal{K}^{m-5}$, which is equivalent to a complex of dimension $n+4$, we can obtain this lift every time $n\geq 5$.

Indeed, note that assuming $MSO(n-1) \wedge M(\Z_k,1)$ is the classifying space for embedded $\Z_k$-smooth submanifold $(N, \delta N)$ imposes the restriction on the normal bundle of each sheet $N$ in $\mathcal{M}$ to be of the split form $\nu_N=\xi^{n-1}_N \oplus \varepsilon$. Near $\partial N$, instead, the rank $n+1$ normal bundle in $\mathcal{M}$ has the structure of $\nu_{\partial N}=\xi^{n-1}_{\partial N} \oplus \varepsilon^2$, from the compatibility condition of the sheets along their boundary $\nu_{N_i|_{\partial N}} \oplus \varepsilon = \nu_{\partial N}$. We have not yet been able to identify which is the correct classifying space for embedded submanifolds without this restriction on the normal bundle, but we remark that, in stable regimes, this is not an issue since $MSO(n-1) \wedge M(\Z_k,1)$ retains the spectrum $MSO \wedge M\Z_k$ after stabilization; in other words, every normal bundle of $N$ can be turned into the structure imposed above.

We remark that, \emph{a posteriori}, we could also have studied the map from $MSO(n-1) \wedge M(\Z_k,1)$ to $K(\Z_k,n)$ representing the mod $k$ Thom class directly, aiming to prove that it is an $n+4$-equivalence for all $n\geq 5$. This would have required a refined study of the cohomology of such spaces with coefficients in $\Z_p$, for $p$ prime, which is arguably more involved than our stable homotopy theory computations in Proposition \ref{p:4-equivalence} and Lemma \ref{l:MSOtoOmega}.
\end{remark}

\begin{proof}[Proof of Theorem \ref{t:criterionmodk}]
Let us denote $T_n:= \Omega^{\infty}\Sigma^n(MSO \wedge M\Z_k)$ and assume that we have a map $f: \mathcal{M} \rightarrow T_n$ pulling back the mod $k$ Thom class $u_k \in H^n(T_n,\Z_k)$ to the $x$ Poincaré dual of $\tau$. Since Lemma \ref{l:MSOtoOmega} below guarantees that the map $g: MSO(n-1) \wedge M(\Z_k,1) \rightarrow T_n$ is a $2n-1$ equivalence and the dimension of $\mathcal{M}$ is $m+n$, then by our stability assumption $n\geq m+1$ we obtain a lift from $T_n$ to $MSO(n-1) \wedge M(\Z_k,1)$ such that the diagram commutes up to homotopy. That is, we obtain a map (not relabeled) $$f : \mathcal{M} \rightarrow MSO(n-1) \wedge M(\Z_k,1)$$ pulling back the mod $k$ Thom class $\tilde{u}_k \in H^n(MSO(n-1) \wedge M(\Z_k,1),\Z_k)$ to $x$.

Now, we exploit a two-stage standard transversality procedure described in Morgan-Sullivan \cite{MorSul}: this is done first obtaining the Bockstein $\delta N$ and then then regular sheets attached to it, yielding the $\Z_k$-smooth submanifold $(N,\delta N)$. Let us start recalling that $M(\Z_k,1)$ is obtained as $S^1 \cup_k D^2$ by attaching a 2-cell onto $S^1$ by a degree $k$ map $h$. Denote by $\hat{e}_i$ the center of the $i$-cell $e_i$ of $M(\Z_k,1), i=0,1,2$. Let $X_k$ denote the union of $k$ closed radial segments in $D^2$ joining $\hat{e}_2$ to $h^{-1}(\hat{e}_1)$. 

Let us first make $f$ smooth and transverse to the zero section $$BSO(n-1) \times \{\hat{e}_2\} \in MSO(n-1) \wedge M(\Z_k,1),$$ so that we obtain $\delta N= f^{-1}(BSO(n-1) \times \{\hat{e}_2\})$, which is a smooth embedded oriented submanifold of $\mathcal{M}$ of codimension $n+1$, since we have to count the contribution given by the codimension 2 of the point $\hat{e}_2$ with respect to the 2-cell. Its normal bundle in $\mathcal{M}$ has the structure $\nu_{\delta N}= \xi^{n-1}_{\delta N} \oplus \varepsilon^2$. Additionally, denoting $D(\cdot)$ the disk bundle, we note that $\delta N$ has an open neighborhood $U$ diffeomorphic to $D(\xi^{n-2}_{\delta N}) \times e_2$. This allows us to homotope $f$ to a map (not relabeled) that coincides with $f$ in $U$ and collapses everything outside $U$ onto $MSO(n-1) \wedge S^1$, which is the Thom space of the universal oriented $n$-plane bundle with one added line $\gamma^{n-1} \oplus \varepsilon$.

We now construct the $m$-dimensional sheets attaching to the neighborhood of the Bockstein $\delta N$. Denote $\mathcal{M}_0:=\mathcal{M}\setminus \text{int} U$. By the homotopy extension property, we can perturb $f$ keeping it fixed inside $U$ and so that $f_0:\mathcal{M}_0 \rightarrow MSO(n-1) \wedge S^1$ is smooth and trasverse to $BSO(n-1) \times \{\hat{e}_1\}$. Hence, we obtain $N_0:=f_0^{-1}(BSO(n-1) \times \{\hat{e}_1\})$, which is a compact smooth oriented submanifold of codimension $n$, with normal bundle $\nu_{N_0}= \xi^{n-1}_{N_0} \oplus \varepsilon$ in $\mathcal{M}_0$ and boundary $\partial N_0$ in $\partial U$.

Denote $X_k-g^{-1}(\hat{e}_1)$ by $\stackrel{\circ}{X}_k$. Then, we finally define $$N:= N_0 \cup \text{cl} \big(f^{-1}(BSO(n-1) \times \stackrel{\circ}{X}_k)\big).$$ Then $(N,\delta N)$ is an embedded $\Z_k$-smooth manifold such that, near $\delta N$, $N$ is diffeomorphic to $\delta N \times C(k)$. Moreover, $\nu_{N\setminus \delta N} = \xi_{N\setminus \delta N}^{n-1} \oplus \varepsilon$ and $\nu_{\delta N} = \xi_{\delta N}^{n-1} \oplus \varepsilon^2$ in $\mathcal{M}$. It is also clear that the mod $k$ fundamental class $[N]$ represents $\tau$, \emph{i.e.} $f_{*}[N]=\tau$.

Conversely, assume that $\tau \in H_m(\mathcal{M},\mathbb{Z}_k)$ is realizable by an embedded $\mathbb{Z}_k$-smooth manifold $N$, that is there exists a smooth $\Z_k$-embedding $i: N \rightarrow \mathcal{M}$ such that $i_*[N]=\tau$. In particular, the map $i$ gives us a map from $N$ to the $\Z_k$-cobordism group $\Omega_{m}(\mathcal{M},\Z_k)$ and, as shown in \cite{MorSul}, there is a natural map $\Omega_{*}(\mathcal{M},\Z_k) \rightarrow (MSO\wedge M\Z_k)_{*} (\mathcal{M}),$ where the latter is the generalized homology theory represented by the spectrum $MSO\wedge M\Z_k$. Finally, by the Poincaré duality in Proposition \ref{p:poincareduality}, this is isomorphic to the generalized cohomology theory generated by $MSO\wedge M\Z_k$, and hence together with Proposition \ref{p:srepresentability} we can write $$(MSO\wedge M\Z_k)_{m} (\mathcal{M}) \simeq (MSO\wedge M\Z_k)^{m} (\mathcal{M}) \simeq [\mathcal{M}, \Omega^{\infty}\Sigma^{n}(MSO\wedge M\Z_k)],$$ hence providing a map $f: \mathcal{M} \rightarrow \Omega^{\infty}\Sigma^{n}(MSO\wedge M\Z_k)$ (representative of that cohomology class) pulling back the mod $k$ Thom class to the Poincaré dual of $\tau$, by commutativity of the diagram.
\end{proof}

\begin{remark}
An adaptation of this criterion holds for mod $k$ Borel-Moore homology and properly embedded $\Z_k$-manifolds representing a mod $k$ homology class, but we skip the details.
\end{remark}

We conclude this section with the following lemma, which was used in the proof of Theorem \ref{t:criterionmodk} to connect $\Omega^{\infty}\Sigma^n(MSO \wedge M\Z_k)$ with an ``unstable'' model for $\Z_k$-manifolds.

\begin{lem}\label{l:MSOtoOmega}
 The map $g: MSO(n-1) \wedge M(\Z_k,1) \rightarrow \Omega^{\infty}\Sigma^n(MSO \wedge M\Z_k)$ is a $2n-1$-equivalence. 
 \end{lem}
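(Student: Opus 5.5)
We will show that $MSO(n-1)\wedge M(\Z_k,1)$ is essentially the $n$-th space of the spectrum $MSO\wedge M\Z_k$ up to a range, and then apply Freudenthal (Theorem \ref{t:Freudenthal}) to compare it with the associated $\Omega$-spectrum. First we fix a model of $M\Z_k$ whose $j$-th space is the Moore space $M(\Z_k,j)$, with the obvious structure maps $\Sigma M(\Z_k,j)\cong M(\Z_k,j+1)$. We then take a naive model of the smash product spectrum $MSO\wedge M\Z_k$ whose $n$-th space is $MSO(n-1)\wedge M(\Z_k,1)$. For instance, we use the "handicrafted" smash product which at level $n$ is $E_{a}\wedge F_{b}$ with $a+b=n$, taking $a=n-1$ and $b=1$, with structure maps given by suspending the $MSO$ factor: $\Sigma(MSO(j)\wedge M(\Z_k,1))\to MSO(j+1)\wedge M(\Z_k,1)$. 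This is legitimate, since any cofinal sequence of bidegrees computes the smash product up to equivalence. The map $g$ is then the canonical map from the $n$-th space $E_n$ of this spectrum $E$ into $\Omega^\infty\Sigma^n E=\operatorname{colim}_j \Omega^j E_{n+j}$.

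The key step is a connectivity estimate. $MSO(j)$ is $(j-1)$-connected, since its bottom cell is the Thom class in degree $j$. $M(\Z_k,1)$ is $0$-connected. Hence $E_n=MSO(n-1)\wedge M(\Z_k,1)$ is $(n-1)$-connected, and $\Sigma E_n$ is $n$-connected.

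Next we examine each structure map $\Sigma E_n\to E_{n+1}$. It is $\sigma_{n-1}\wedge \mathrm{id}$, where $\sigma_{n-1}:\Sigma MSO(n-1)\to MSO(n)$ is the inclusion of Thom spaces induced by $BSO(n-1)\to BSO(n)$. That inclusion is an $n$-equivalence on base spaces, so by the Thom isomorphism and Whitehead the map $\Sigma MSO(n-1)\to MSO(n)$ is roughly a $(2n)$-equivalence: its cofiber only has cells in degree $\geq n+n=2n$, up to an off-by-one that we will check carefully. Smashing with the $0$-connected space $M(\Z_k,1)$ raises this by one, giving a $(2n+1)$-equivalence.

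We then combine this with Freudenthal. The adjoint $E_n\to\Omega E_{n+1}$ factors as $E_n\to\Omega\Sigma E_n\to\Omega E_{n+1}$. The first map is a $(2(n-1)+1)=(2n-1)$-equivalence by Theorem \ref{t:Freudenthal}, since $E_n$ is $(n-1)$-connected. The second map is $\Omega$ of a $(2n+1)$-equivalence, hence a $2n$-equivalence. So the composite is a $(2n-1)$-equivalence. Iterating, the map $E_{n+j}\to\Omega E_{n+j+1}$ is a $(2n+2j-1)$-equivalence, so $\Omega^j E_{n+j}\to\Omega^{j+1}E_{n+j+1}$ is a $(2n+j-1)$-equivalence, which is at least $2n-1$. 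Passing to the colimit, where homotopy groups commute with sequential colimits, shows that $g$ is a $(2n-1)$-equivalence.

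The main obstacle is pinning down the connectivity of $\Sigma MSO(n-1)\to MSO(n)$ precisely and ensuring the naive model really represents $MSO\wedge M\Z_k$. For the former we compare the Thom-space cells via the relative Serre/Thom isomorphism: the pair $(BSO(n),BSO(n-1))$ is $n$-connected because the fiber is $S^{n-1}$, so the pair of Thom spaces is $(2n-1)$-connected. This is enough, since Freudenthal is already the bottleneck. For the latter we cite the standard construction of smash products in \cite{Adams}, as recalled before Proposition \ref{p:equivalences}.
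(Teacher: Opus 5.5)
Your argument is correct once the connectivity bookkeeping is fixed, and it organizes the computation differently from the paper's proof.

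\textbf{The connectivity numbers.} The map $BSO(n-1)\to BSO(n)$ is only an $(n-1)$-equivalence, not an $n$-equivalence. The pair is $(n-1)$-connected, since $\pi_n(BSO(n),BSO(n-1))\cong\pi_{n-1}(S^{n-1})\cong\mathbb{Z}$; for instance, when $n=3$ the map $\pi_2BSO(2)\cong\mathbb{Z}\to\pi_2BSO(3)\cong\mathbb{Z}_2$ is not injective. The corrected chain is:
\begin{itemize}
\item $\Sigma MSO(n-1)\to MSO(n)$ is a $(2n-1)$-equivalence;
\item $\sigma_{n-1}\wedge\mathrm{id}$ is a $2n$-equivalence;
\item $\Omega(\sigma_{n-1}\wedge\mathrm{id})$ is a $(2n-1)$-equivalence.
\end{itemize}
Your main paragraph has $2n$, $2n+1$, $2n$ instead. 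Your last paragraph does reach the correct value $2n-1$ for the Thom pair, but from the incorrect premise that the base pair is $n$-connected.

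With the corrected values there is no slack: the structure maps tie exactly with the Freudenthal bound rather than Freudenthal being the sole bottleneck. The composite $E_n\to\Omega E_{n+1}$ is still a $(2n-1)$-equivalence, and your level-$(n+j)$ estimate $2n+2j-1$ and the colimit argument go through verbatim.

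\textbf{The model.} Keeping $b=1$ fixed is not a cofinal choice of bidegrees, so that justification does not work as stated. The clean justification is that your $E$ is exactly $\Sigma^{-1}$ of the spectrum $MSO\wedge M(\mathbb{Z}_k,1)$, a spectrum smashed with a space as defined in Section~\ref{s:spectra}. Then $MSO\wedge M(\mathbb{Z}_k,1)\simeq MSO\wedge\Sigma^\infty M(\mathbb{Z}_k,1)\simeq\Sigma(MSO\wedge M\mathbb{Z}_k)$.

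\textbf{Comparison with the paper.} Write $T=MSO(n-1)\wedge M(\mathbb{Z}_k,1)$. The paper factors $g$ as $T\to\Omega^\infty\Sigma^\infty T\to\Omega^\infty\Sigma^n(MSO\wedge M\mathbb{Z}_k)$. It applies Freudenthal once, to stabilize $T$ completely. It then obtains the second map as $\Omega^\infty$ of a map of spectra: $\Sigma^\infty MSO(n-1)\to\Sigma^{n-1}MSO$ is a $2(n-1)$-equivalence, and smashing with the $0$-connected spectrum $\Sigma^\infty M(\mathbb{Z}_k,1)\simeq\Sigma M\mathbb{Z}_k$ upgrades this to $2n-1$ via Proposition~\ref{p:equivalences}(ii).

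You instead stay unstable and levelwise, in a model whose $n$-th space is literally $T$, interleaving Freudenthal with the structure maps along the telescope $\Omega^jE_{n+j}$. The ingredients are the same: Freudenthal for an $(n-1)$-connected space, and the $(n-1)$-connectivity of $BSO(n-1)\to BSO(n)$ transported to Thom spaces. Your version is more elementary and makes explicit what $g$ is, which the paper leaves implicit. The paper's version is shorter and model-independent, at the price of invoking the spectrum-level smash-product connectivity results.
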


\begin{proof}
Denote $T^n_k:=MSO(n-1) \wedge M(\Z_k,1)$. We will prove that both maps $$f_1: T^n_k \rightarrow \Omega^{\infty}\Sigma^{\infty} T^n_k, \quad f_2: \Omega^{\infty}\Sigma^{\infty} T^n_k \rightarrow \Omega^{\infty}\Sigma^{n} MSO \wedge M\Z_k$$ are $2n-1$-equivalences, hence giving us the result. 

We first note that $T^n_k$ is $n-1$-connected, as it is the smash product of an $n-2$-connected space and a connected one. By the Freudenthal suspension theorem, \emph{cf.} Theorem \ref{t:Freudenthal}, we know that the map from $T^n_k$ to $\Omega\Sigma T^n_k$ is a $2n-1$-equivalence; we also know that $\Sigma^i T^n_k$ is $(n-1+i)$-connected. Hence the maps $\Omega^i\Sigma^i T^n_k\to\Omega^{i+1}\Sigma^{i+1} T^n_k$ are $(2n-1+i)$-equivalences, so that the stabilization map $f_1: T^n_k \rightarrow \Omega^{\infty}\Sigma^{\infty} T^n_k$ is a $2n-1$-equivalence.

Moreover, note that the map $h:\Sigma^{\infty}MSO(n-1) \rightarrow \Sigma^{n-1}MSO$ of Thom spectra is a $2(n-1)$-equivalence. This follows from the fact that at the level of classifying spaces the map $BSO(n-1) \rightarrow BSO(n-1 +i)$ is an $n-1$-equivalence for every $i\geq1$. Hence, at the level of Thom spaces, we obtain that the map $\Sigma^{i}MSO(n-1) \to MSO(n-1+i)$ is a $(2(n-1) +i)$-equivalence. Passing to suspension spectra and desuspending by $i$ we obtain that the map $\Sigma^{\infty}MSO(n-1) \to \Sigma^{-i}\Sigma^{\infty}MSO(n-1+i)$ is a $2(n-1)$-equivalence. Furthermore, note that $$\Sigma^{-i}\Sigma^{\infty}MSO(n-1+i)= \Sigma^{n-1}\Sigma^{-(n-1+i)}\Sigma^{\infty}MSO(n-1+i)$$ so that, after stabilization, we obtain that the map $h$ is a $2(n-1)$-equivalence as well. 

Now, since the suspension spectrum $\Sigma^{\infty}M(\Z_k,1)$ is equivalent to the one-shifted Moore spectrum $\Sigma M\Z_k$, we have that the map $$H: \Sigma^{\infty}T^n_k \simeq \Sigma^{\infty}MSO(n-1) \wedge  \Sigma^{\infty}M(\Z_k,1) \rightarrow \Sigma^{n-1}MSO \wedge \Sigma M\Z_k \simeq \Sigma^{n} MSO \wedge M\Z_k$$ is a $2n-1$-equivalence; this follows from the $-1$-connectedness of $M\Z_k$ and the fact that suspending a map of sectra raises its connectivity by one. Finally, applying the infinite loop space $\Omega^{\infty}$ preserves the $2n-1$-equivalence of the map, so that the map $f_2: \Omega^{\infty}\Sigma^{\infty} T^n_k \rightarrow \Omega^{\infty}\Sigma^{n} MSO \wedge M\Z_k$ is a $2n-1$-equivalence, obtaining our conclusion.
\end{proof}

\section{Geometric measure theory propositions}\label{s:GMT}

We now state the mod $k$ analogs of the key intermediate geometric measure theory results needed for the proof of Theorem \ref{t:1}. They have independent interest and, in particular, the $\Z_k$-smooth approximation away from the codimension 2 skeleton, established in Theorem \ref{p:poly_approx_prescribedsing} below, requires particular attention.

\begin{remark}
We remark that using Borel-Moore homology classes allows us to deal with homology classes of $\mathcal{M}\setminus \mathcal{K}^{m-2}$ directly, instead of removing a suitable smooth neighborhood of the codimension 2 skeleton $\mathcal{K}^{m-2}$ of any smooth triangulation $\mathcal{K}$ of $\mathcal{M}$. This allows us to avoid a large number of tedious technicalities compared to \cite{ABCD, nonorientato} where one needs to build a relative homology class representative; instead we directly obtain a map from $\mathcal{M}\setminus \mathcal{K}^{m-2}$ to the suitable Thom space pulling back the mod $k$ Thom class to the Poincaré dual of $\tau$.
\end{remark}

\subsection{Codimension 2 $\mathbb{Z}_k$-smooth approximation}

The first step towards proving Theorem \ref{t:1} is the following mod $k$ version of the codimension 2 smooth approximation theorem in \cite[Section 4]{ABCD}, that is an upgraded version of the Federer and Fleming strong polyhedral approximation theorem in the spirit of \cite[$(4.2.21)^{\nu}$]{Federerbook}\footnote{See also \cite[Theorem 3.4]{Marchesestuvard}.}, where the approximands are smooth submanifolds outside of the $(m-2)$-skeleton $\mathcal{K}^{m-2}$ of a smooth triangulation of $\mathcal{M}$.

\begin{pro} \label{p:poly_approx_prescribedsing}
Let $\mathcal{M}$ be as in Assumption \ref{a:1} and let $T$ be an $m$-dimensional integral cycle mod $k$ in $\mathcal{M}$. 
For every fixed $\varepsilon_c > 0$ there is a mod $k$ integral cycle $\tilde{P}$ homologous to $T$ and a smooth triangulation $\mathcal{K}$ of $\mathcal{M}$ with the following properties:
\begin{itemize}
\item[($a_0$)] $\mathbb{M}(\tilde{P}) \leq (1+\varepsilon_c) \mathbb{M}(T)$;
\item[($b_0$)] $\mathbb{F} (T-\tilde{P})\leq \varepsilon_c$;
\item[($c_0$)] $\spt(\tilde{P}) \subset \{ x : \dist(x,\spt(T))\leq \varepsilon_c\}$;
\item[($d_0$)] for every $p\in \spt (\tilde{P}) \setminus \mathcal{K}^{m-1}$ there is a neighborhood $U$ of $p$ and a smooth $m$-dimensional oriented submanifold $\Gamma$ of $\mathcal{M} \cap U$ with boundary contained in $\mathcal{M} \cap \partial U$ and such that $\tilde{P} \res U = \llbracket \Gamma \rrbracket$. 
\end{itemize}
\vspace{0.25cm}
\noindent
Furthermore, for a sufficiently small $\delta_c'>0$ and any $\delta_c<\delta_c'$ we can find another mod $k$ integral cycle $P'$ homologous to $P$ with the following properties:
\vspace{0.25cm}
\begin{itemize}
\item[($a_1$)] $\mathbb{M} (P') \leq (1+2\varepsilon_c) \mathbb{M} (T)$ and $\mathbb{F} (T-P') \leq 2\varepsilon_c$;
\item[($b_1$)] $\spt(P') \subset \{ x : \dist(x,\spt(T))\leq 2\varepsilon_c\}$;
\item[($c_1$)] $\|P'\| (B_{\delta'_c} (\mathcal{K}^{m-2})) \leq 3\varepsilon_c$;
\item[($d_1$)] $P'\res \mathcal{M}\setminus \mathcal{K}^{m-2}= \llbracket\Gamma \rrbracket\modk$ for some embedded $\mathbb{Z}_k$-smooth submanifold $\Gamma$ of $\mathcal{M}\setminus \mathcal{K}^{m-2}$ without boundary in $\mathcal{M}\setminus \mathcal{K}^{m-2}$.
\end{itemize}
\end{pro}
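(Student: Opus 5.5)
The proof follows the codimension 2 smooth approximation of \cite[Proposition 4.1]{ABCD}, with the integral polyhedral approximation replaced by its mod $k$ counterpart. The only genuinely new step is the reorganization of the codimension one faces into groups of exactly $k$ sheets.

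\textbf{First part.} Start from the mod $k$ strong polyhedral approximation theorem \cite[(4.2.20)$^{\nu}$, (4.2.21)$^{\nu}$]{Federerbook}, applied in $\mathcal{M}$ through a smooth triangulation and bi-Lipschitz charts. This produces a polyhedral mod $k$ cycle $\tilde P$, homologous to $T$, satisfying ($a_0$), ($b_0$) and ($c_0$). Here $\tilde P$ is supported in the $m$-skeleton of a triangulation, and the homology statement comes from the fact that the approximation is realized as $T-\tilde P=\partial S$ mod $k$ with $S$ of small mass.

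Next, refine and perturb the triangulation, using the smooth-triangulation argument of \cite{ABCD}, so that $\tilde P$ is a sum $\sum_\sigma \theta_\sigma \llbracket\sigma\rrbracket$ over $m$-simplices $\sigma$ of a smooth triangulation $\mathcal{K}$. We may take $\theta_\sigma\in\{1,\dots,\lfloor k/2\rfloor\}$ as a representative mod $k$, after fixing orientations. Property ($d_0$) is then immediate away from $\mathcal{K}^{m-1}$.

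\textbf{Second part: local structure at the faces.} Consider an $(m-1)$-face $F$ of $\mathcal{K}$ with $F\not\subset\mathcal{K}^{m-2}$. The cycle condition mod $k$ says that $\sum_{\sigma\supset F}\pm\theta_\sigma\equiv 0 \pmod k$, with signs given by induced orientations. Replace each $\theta_\sigma\llbracket\sigma\rrbracket$ by $\theta_\sigma$ parallel copies of $\sigma$, displaced slightly in normal directions; in codimension $n\geq 1$ this costs arbitrarily little mass, by an amount controlled by $\varepsilon_c$. At $F$ we now have a collection of sheets with signs, $p$ of them positive and $q$ negative, with $p-q\equiv 0 \pmod k$.

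Perform a combinatorial matching of these sheets:
\begin{itemize}
\item pair each positive sheet with a negative one and glue the pair smoothly across $F$ into a smooth sheet, as in \cite{ABCD};
\item group the remaining $|p-q|=jk$ same-sign sheets into $j$ groups of exactly $k$, each forming a local $\delta F\times C(k)$ model with Bockstein a copy of $F$.
\end{itemize}
Choose the local models so that all tangent planes agree along the Bockstein (the cusp-like embedding of $C(k)$) and so that copies belonging to different groups are disjoint. This is possible since $n\geq 1$ leaves room to separate them.

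Then glue these local models to the interiors of the simplices and smooth everything near the interiors of the $(m-1)$-faces, using a partition of unity. The resulting current $P'$ coincides on $\mathcal{M}\setminus\mathcal{K}^{m-2}$ with the current of an embedded $\mathbb{Z}_k$-smooth submanifold $\Gamma$, which gives ($d_1$). Properties ($a_1$) and ($b_1$) follow because the modifications take place in a $\delta_c$-neighborhood of $\mathcal{K}^{m-1}$ and change the mass by $O(\delta_c)$.

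For ($c_1$), note that $\|\tilde P\|(B_{\delta'_c}(\mathcal{K}^{m-2}))\to 0$ as $\delta'_c\to 0$, since $\mathcal{K}^{m-2}$ is $\|\tilde P\|$-null. The modification adds at most $\varepsilon_c$ of mass there once $\delta_c<\delta'_c$ is small. Finally, $P'-\tilde P$ is the boundary mod $k$ of a small current obtained from the straight-line homotopy, which gives the homology statement.

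\textbf{Main obstacle.} The hard part is the consistency of the sheet groupings along $F$ as $F$ approaches its own boundary in $\mathcal{K}^{m-2}$. The matchings are chosen face by face and need not glue across $\mathcal{K}^{m-2}$, but since $\Gamma$ is only required to be a submanifold of $\mathcal{M}\setminus\mathcal{K}^{m-2}$, this failure is harmless. What must be checked carefully is that the displaced copies stay embedded and disjoint near $\mathcal{K}^{m-2}$ within $\mathcal{M}\setminus\mathcal{K}^{m-2}$. I would handle this by choosing displacement sizes that decay towards $\mathcal{K}^{m-2}$, so that the closure of $\Gamma$ adds only points of $\mathcal{K}^{m-2}$.
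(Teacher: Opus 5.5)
Your outline follows the paper's proof closely. It uses the mod $k$ polyhedral approximation, splits each face of multiplicity $\theta_\sigma$ into $\theta_\sigma$ copies, cancels opposite-sign sheets at each $(m-1)$-face as in \cite{ABCD}, and groups the remaining $|p-q|=jk$ same-sign sheets into blocks of $k$ meeting along a new Bockstein. Your decaying-scale treatment near $\mathcal{K}^{m-2}$ is the ``easy improvement'' the paper only asserts at the end; its explicit cutoff $\psi_i$ gives $\mathbb{Z}_k$-smoothness only outside $B_{\delta_c}(\mathcal{K}^{m-2})$. It works if you take the thickness proportional to $\dist(\cdot,\mathcal{K}^{m-2})$, so that the modification regions of different $(m-1)$-faces stay disjoint.

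Two steps need correcting.

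First, $(d_0)$ is not immediate for the polyhedral chain $\sum_\sigma\theta_\sigma\llbracket\sigma\rrbracket$. For $k\geq 4$ one can have $2\le\theta_\sigma\le\lfloor k/2\rfloor$. Then near an interior point, $\tilde P\res U=\theta_\sigma\llbracket\sigma\cap U\rrbracket$, and since $\theta_\sigma\not\equiv\pm1$ mod $k$, this is not $\llbracket\Gamma\rrbracket$ mod $k$ for any submanifold $\Gamma$. So the multiplicity splitting belongs in the first part, and $\tilde P$ must be the split chain. The copies also cannot be literal parallel translates. They need boundary exactly $\partial\sigma$, so that the cycle condition survives and they form a fan at each $(m-1)$-face. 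They must be pairwise disjoint in the interior and meet copies of other simplices only in $\mathcal{K}^{m-1}$. The paper gets all of this from the graphs $x+\delta_j f(A(x))v_{m+1}$ inside an $(m+1)$-simplex containing $\sigma$, with $f$ from Lemma \ref{l:costruisco_funzione}.

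Second, separating the groups does not follow from $n\geq 1$. If $n=1$, the normal slice to an $(m-1)$-face is a plane. A pairing or grouping that interleaves in the cyclic order of the rays then cannot be realized by disjoint arcs in that slice, by the Jordan curve theorem. What you actually use is $n\geq 2$, which Assumption \ref{a:1} guarantees because $m\geq 1$. Then the normal slice has dimension $n+1\geq 3$, and the arcs from the new junction points can be made disjoint by general position, as in the paper. In codimension one you would instead have to choose a non-crossing grouping, for instance by first cancelling cyclically adjacent opposite-sign rays.
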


The heart of the proof of Proposition \ref{p:poly_approx_prescribedsing} is similar to the one in the previous settings: to develop a suitable perturbation of an approximating polyhedral cycle which is a $\Z_k$-smooth embedded submanifold outside of the codimension 2 skeleton $\mathcal{K}^{m-2}$ of the smooth triangulation $\mathcal{K}$ of $\mathcal{M}$; this is then applied to obtain a map from $\mathcal{M}\setminus \mathcal{K}^{m-2}$ to the suitable Thom space pulling back the mod $k$ Thom class to the restriction of Poincaré dual of $\tau$. 

Particular care is needed in the following steps: 
\begin{itemize}
    \item multiplicity removal from $m$-dimensional faces coming potentially with positive multiplicity (analogously to that in the integral setting);
    \item splitting any multiples of $(m-1)$-dimensional faces along which $m$-dimensional faces meet in pairs with opposite orientations and smooth these out (cross-section-wise) as in \cite{ABCD}, so that one remains with $\ell \cdot k$ sheets, for $\ell \geq2$, all with the same sign;
    \item splitting these $\ell \cdot k$ sheets into $\ell$ groups of $k$ sheets that are all with the same orientation and structure along the $(m-1)$-dimensional faces.
\end{itemize}
Finally we obtain a mod $k$ polyhedral cycle with multiplicity 1 in each face, close in the flat topology, in support and with mass close to that of $T$. Moreover, each component is either smooth oriented or has a codimension one stratum of singularities where exactly $k$ sheets join together; such singular components together form the (possibly disconnected) Bockstein. We hence obtain a mod $k$ integral cycle which is a $\Z_k$-smooth manifold outside of $\mathcal{K}^{m-2}$ for a smooth triangulation $\mathcal{K}$ of $\mathcal{M}$.

\begin{proof}
Choose a mod $k$ integral cycle $T$ in $\mathcal{M}$ and consider it as a mod $k$ integral cycle in $\mathbb{R}^N$ for a suitable choice of $N$. Apply now the \emph{first approximation} of \cite[Proposition 4.1]{ABCD}, which can be readily adapted to mod $k$ integral cycles, to obtain a mod $k$ integral cycle $P$ homologous to $T$ with $\spt(P) \subset \mathcal{M}$ and such that:
\begin{itemize}
\item $\mathbb{M}(P) \leq (1+\varepsilon_c/2) \mathbb{M}(T)$;
\item $\mathbb{F} (T-P)\leq \varepsilon_c/2$;
\item $\spt(P) \subset \{ x : \dist(x,\spt(T))\leq \varepsilon_c/2\}$;
\item $P = \sum_{F\in \mathcal{F}^m} a_F \llbracket F \rrbracket$ mod $k$, $a_F \in \Z_k$ and where $\mathcal{F}^m$ is the collection of $m$-dimensional oriented cells of $\mathcal{K}$, for a suitable choice of smooth triangulation $\mathcal{K}=\mathcal{K}(T, \varepsilon_c)$.
\end{itemize}

\noindent
Starting with $P$ we now perform the \emph{second approximation} of \cite[Proposition 4.1]{ABCD}, which was divided into Step 1 (regularization in the complement of $\mathcal{K}^{m-1}$) and Step 2 (removal of the $(m-1)$-dimensional singularities). This time, we need not, and cannot generally hope to, remove all of the $(m-1)$-dimensional singularities without creating any additional boundary.

{\bf Regularization on $\mathcal{M} \setminus \mathcal{K}^{m-1}$}

We start by fixing, for the above triangulation $\mathcal{K}$, a suitable polyhedral submanifold $K$ of $\mathbb R^N$ and a piecewise smooth homeomorphism $\psi: K\to \mathcal{M}$ which realizes the triangulation $\mathcal{K}$ in the sense that, for a suitable triangulation $\mathcal{T}$ of the polyhedron $K$, the following holds: for every cell $F$ of $\mathcal{K}$, its diffeomorphic preimage $\psi^{-1} (F)$ is a simplex of $\mathcal{T}$. The mod $k$ current $P$ is then given by
\[
P = \sum_{F\in \mathcal{F}^m} \beta_F \llbracket F \rrbracket\, ,
\]
where $\beta_F\in \Z_k$ and $\mathcal{F}^m$ is the collection of $m$-dimensional cells of $\mathcal{K}$. After choosing integer representatives of least (absolute) value and orienting the faces accordingly, we write $P=\sum_{F\in\mathcal F^m}\beta_F\llbracket F\rrbracket$ mod $k$, for $\beta_F\in\{0,\ldots,\lfloor k/2\rfloor\}.$ Since the relative interiors of distinct faces are disjoint, the mod $k$ mass satisfies $\mathbb M(P)=
\sum_{F\in\mathcal F^m}\beta_F\,\mathcal H^m(F).$ For every cell $F$ with $\beta_F>0$ we consider $\Delta:= \psi^{-1} (F)$ and we let $\Gamma$ be an $(m+1)$-dimensional simplex of the triangulation $\mathcal{T}$ which contains $\Delta$. We will replace $\beta_F \llbracket F \rrbracket$ with $\sum_{j=1}^{\beta_F} \llbracket \psi (\Delta_j)\rrbracket$, where the $\Delta_j\subset \Gamma$ are diffeomorphic images of $\Delta \subset \Gamma$ with $\partial \Delta_j=\partial \Delta$ and $\Delta_{j'}\cap \Delta_{j''}=\partial \Delta$ for every $j'\neq j''$. In order to define the $\Delta_j$ we will use the following elementary lemma, which is proven in \cite[Lemma 4.7]{ABCD}.

\begin{lem}\label{l:costruisco_funzione}
Consider the $m$-dimensional simplex $\Omega\subset \mathbb R^{m}$ which is the convex hull of $\{e_0, e_1, \ldots, e_m\}$, where $e_0=0$ and $e_1, \ldots, e_m$ is the standard basis. For each $i\in \{0, 1, \ldots, m\}$ let $F_i$ be the relative interior of the $(m-1)$-dimensional face of $\Omega$ spanned by $e_0, \ldots, e_{i-1}, e_{i+1}, \ldots, e_m$. In other words, $F_i$ consists of those points $p$ which can be written as convex combinations $\sum_j \lambda_j e_j$ with $\lambda_i=0$ and $\lambda_j>0$ for every $j\neq i$.

Then there is a Lipschitz function $f:\Omega \to \mathbb R$ and a neighborhood $V$ of $\bigcup_i F_i$ such that
\begin{itemize}
\item[(a)] $f$ is positive and smooth in the interior of $\Omega$;
\item[(b)] $f (x) = \dist (x, \partial \Omega)$ for every $x\in V$;
\item[(c)] ${\rm Lip}\, (f) \leq C$ for some constant $C=C(m)$.
\end{itemize}
\end{lem}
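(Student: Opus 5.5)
The plan is to build $f$ by gluing the distance function near the open facets to a smooth positive interior function, using a partition of unity adapted to the facets. Start from the Lipschitz function $d(x):=\dist(x,\partial\Omega)$. It already satisfies (b) and (c), with ${\rm Lip}(d)\le 1$, and it is positive in the interior. The only defect is smoothness. On the interior, $d(x)=\min_i \ell_i(x)$, where $\ell_i$ is the Euclidean distance to the affine hyperplane spanned by the $i$-th facet. Each $\ell_i$ is affine, so $d$ is smooth away from the ``ridge'' set where two or more of the $\ell_i$ tie for the minimum.

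The first step is to choose $V$ so that it avoids this ridge. For each $i$, let $V_i$ be the set of points $x$ in the interior of $\Omega$ with $\ell_i(x)<\tfrac12\min_{j\neq i}\ell_j(x)$, together with the open facet $F_i$ itself. Then $V_i$ is relatively open in $\Omega$ and contains $F_i$, since on $F_i$ we have $\ell_i=0<\ell_j$ for all $j\ne i$. On $V_i$ we have $d=\ell_i$, which is affine and therefore smooth. The sets $V_i$ are pairwise disjoint, because the defining inequalities for $V_i$ and $V_j$ are incompatible. Set $V:=\bigcup_i V_i$.

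The second step is to smooth $d$ away from $V$. Let $W_i\subset V_i$ be the smaller set defined by the same condition with the constant $\tfrac14$ in place of $\tfrac12$, and pick smooth cutoffs $\chi_i$ on the interior, equal to $1$ on $W_i$ and supported in $V_i$. Let $\tilde d$ be a mollification of $d$ with a variable radius $\rho(x)=c\,d(x)$, with $c$ small, so that $\tilde d$ is smooth in the interior. This gives $|\tilde d-d|\le c\,d$ and ${\rm Lip}(\tilde d)\le C$; in particular $\tilde d\ge(1-c)d>0$. Define
\[
f:=\sum_i\chi_i\,d+\Big(1-\sum_i\chi_i\Big)\tilde d ,
\]
extended by $0$ on $\partial\Omega$. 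This $f$ is smooth and positive in the interior, so (a) holds, and after renaming $W:=\bigcup_i W_i$ as the neighborhood it equals $d$ there, so (b) holds.

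The main obstacle is the Lipschitz bound (c) near the lower-dimensional faces of $\partial\Omega$, where the sets $V_i$ pinch together. There $|\nabla\chi_i|$ blows up like $1/d$, because the cutoffs are homogeneous in the cone coordinates. I would control this by computing
\[
\nabla f=\sum_i\chi_i\nabla d+\Big(1-\sum_i\chi_i\Big)\nabla\tilde d+\sum_i\nabla\chi_i\,(d-\tilde d),
\]
and using $|\nabla\chi_i|\le C/d$ together with $|d-\tilde d|\le c\,d$, which makes the last term bounded. For this to work, the $\chi_i$ must be built scale-invariantly. Concretely, I would take $\chi_i=\eta\big(\ell_i/\min_{j\ne i}\ell_j\big)$ with a fixed smooth $\eta$; the bound $|\nabla\chi_i|\le C/d$ then follows from the homogeneity of degree zero. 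The resulting constant depends only on $m$, which gives (c).
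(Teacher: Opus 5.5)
The paper gives no argument for this lemma; it defers to \cite[Lemma 4.7]{ABCD}. So I am judging your proof on its own. Your sets $V_i$ are correct: relatively open, pairwise disjoint, containing $F_i$, with $d=\ell_i$ on $V_i$. The Lipschitz bookkeeping for (c) is also fine. There, $|\nabla\chi_i|\le C/d$ follows from the quotient rule and $\min_{j\neq i}\ell_j\ge 2d$ on the transition region; homogeneity does not literally hold, since the $\ell_j$ are affine.

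\textbf{The gap: the construction as written does not give (a).} Mollifying with radius $\rho(x)=c\,d(x)$ produces a function that is only as smooth as $\rho$. But $c\,d$ has corners along the ridge $\{\ell_j=\ell_{j'}=d\}$, where all your cutoffs vanish, so $f=\tilde d$ there.
\begin{itemize}
\item[(i)] Already for $m=1$, $\Omega=[0,1]$, this is visible. With $\tilde d(x)=\int d(x-\rho(x)y)\phi(y)\,dy$, $t=x-\tfrac12$ and $G(s)=\int|s-y|\phi(y)\,dy$, one gets $\tilde d=\tfrac12-\rho\,G(t/\rho)$ near $t=0$. Its one-sided derivatives at $t=0$ are $\pm c\int|y|\phi(y)\,dy\neq0$, so $f$ has a corner at $x=\tfrac12$.
\item[(ii)] Your explicit cutoffs $\chi_i=\eta(\ell_i/\min_{j\neq i}\ell_j)$ are not smooth either. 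For $m\ge2$ there are points of $V_i\setminus W_i$ where two of the $\ell_j$, $j\neq i$, tie for the minimum while $\eta'\neq0$. This defect happens to be harmless, because $\tilde d=d$ on $V_i$ once the radius is at most $d/2$ (see below). Your argument does not use this, though, so ``$f$ is smooth'' is not justified as written.
\end{itemize}

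\textbf{The fix: use a smooth radius comparable to $d$. The cutoffs then become unnecessary.}
\begin{itemize}
\item[(i)] Take $\rho:=\tfrac14\big(\sum_{k=0}^m\ell_k^{-1}\big)^{-1}$. It is smooth in $\mathrm{int}\,\Omega$, with $\tfrac{d}{4(m+1)}\le\rho\le\tfrac d4$ and $|\nabla\rho|\le\tfrac14$.
\item[(ii)] Set $\tilde d(x):=\int d(z)\,\rho(x)^{-m}\phi\big((x-z)/\rho(x)\big)\,dz=\int d(x-\rho(x)y)\,\phi(y)\,dy$, with $\phi\ge0$ radial and supported in $B_1$. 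This is smooth and positive in the interior, and $0<\tilde d\le\tfrac54 d$.
\item[(iii)] Moreover $|\tilde d(x)-\tilde d(x')|\le|x-x'|+|\rho(x)-\rho(x')|\le\tfrac54|x-x'|$. So (c) holds with no special analysis near the lower-dimensional faces, which you had singled out as the main obstacle.
\item[(iv)] For $x\in V_i\cap\mathrm{int}\,\Omega$ and $z\in B(x,\rho(x))$, you have $\ell_i(z)\le\tfrac54\ell_i(x)<\tfrac74\ell_i(x)<\ell_j(z)$ for every $j\neq i$. So $d=\ell_i$ is affine on that ball, and the radial mollifier returns $\tilde d(x)=d(x)$.
\end{itemize}
Hence $f:=\tilde d$, extended by $0$ on $\partial\Omega$, already satisfies (a)--(c) with $V=\bigcup_iV_i$.
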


With Lemma \ref{l:costruisco_funzione} at hand, we are ready to define $\Delta_j$. First of all let $\{v_0, v_1, \ldots, v_m\}$ be the extremal points of $\Delta$, $\pi$ the $m$-dimensional linear space spanned by $\{v_1-v_0, \ldots , v_m-v_0\}$ and then let $v_{m+1}$ be the only unit vector orthogonal to $\pi$ with the property that $\frac{1}{m+1} \sum_i v_i + \gamma v_{m+1} \in \Gamma$ for every $\gamma$ sufficiently small. Let $A: \Delta \to \Omega$ be the affine map defined by $A (v_i)=e_i$. Choose then positive numbers $0<\delta_1 < \delta_2 < \ldots < \delta_{\beta_F}$ and define $\Delta_j$ as 
\[
\Delta_j = \{x+ \delta_j\, f(A(x)) v_{m+1} : x\in \Delta\}\, .
\]
Provided the $\delta_{\beta_F}$ is chosen sufficiently small, each $\Delta_j$ is contained in $\Gamma$.
Note moreover that, by construction, $\psi (\Delta_{j'})\cap \psi (\Delta_{j''})\subset \mathcal{K}^{m-1}$, the $(m-1)$-dimensional skeleton of $\mathcal{K}$.

We perform the above construction for all $m$-dimensional cells $F$ with $\beta_F>0$. Upon enumerating them as $\{F^i\}_{i\in I}$, for each $i \in I$ we denote by $\{\Delta^i_j\}_{j\in J_i}$ the corresponding $m$-dimensional cells in the polyhedron $K$, by $\delta^i$ the number $\delta_{\beta_{F_i}}$ and by $\{F^i_j\}_{j\in J_i}$ their images through $\psi$. We note further that, by choosing the $\delta_{\beta_{F_i}}$ sufficiently small, we can ensure that $F^i_j \cap F^{i'}_{j'} \subset \mathcal{K}^{m-1}$ for every choice of distinct pairs $(i,j)$ and $(i',j')$.

Consider now the mod $k$ integral current
\[
\tilde{P} := \sum_{i\in I} \sum_{j \in J_i} \llbracket F^i_j\rrbracket\, .
\]
We have $\partial (\sum_j \llbracket F^i_j\rrbracket) -\partial (\beta^i \llbracket F_i\rrbracket)=0$ mod $k$ and so $\sum_j \llbracket F^i_j\rrbracket - \beta^i \llbracket F_i \rrbracket$ is a mod $k$ integral cycle $T_i$. Moreover, via the homotopy formula we can ensure that the mass of the filling is as small as required, provided that $\delta^i$ is chosen small enough. We can further ensure that $\mathcal{H}^m (F^i_j)$ is as close as required to $\mathcal{H}^m (F)$ via the area formula. In particular, we conclude that, upon choosing the $\delta^i$'s sufficiently small,
\begin{itemize}
\item $\tilde{P}$ is in the same homology class as $P$ (in $\mathcal{M}$);
\item $\spt (\tilde{P}) \subset B_{\varepsilon_c} (\spt (T))$;
\item $\mathbb{M} (\tilde{P}) \leq (1+\varepsilon_c) \mathbb{M} (T)$;
\item $\mathbb{F} (T-\tilde{P}) \leq \varepsilon_c$;
\item $\spt (\tilde{P}) \setminus \mathcal{K}^{m-1}$ is smooth and is taken with multiplicity $1$ by $\tilde{P}$;
\end{itemize}
More precisely, for every $p\in \spt (\tilde{P}) \setminus \mathcal{K}^{m-1}$ there is a neighborhood $U$ of $p$ and a smooth oriented $m$-dimensional submanifold $\Gamma$ of $\mathcal{M} \cap U$ with boundary contained in $\mathcal{M} \cap \partial U$ such that 
$\tilde{P} \res U = \llbracket \Gamma \rrbracket$. This yields the desired claims $(a_0)$-$(d_0)$.

{\bf Splitting from $\Z_{\ell \cdot k+ 2a}$ to $\Z_k$}

In the last step of the proof of Proposition \ref{p:poly_approx_prescribedsing} we will slightly perturb $\tilde{P}$ by removing its additional $(m-1)$-dimensional singularities for $m$-dimensional faces joining the $m-1$-faces with opposite orientation. This is exactly the same as in the oriented setting, in particular Step 2 in the second approximation in proof of \cite[Proposition 4.1]{ABCD}. The remaining $\ell \cdot k$ $m$-faces all have the same sign, and we split them into $\ell$ groups by a slight perturbation, which topologically corresponds to the map $r: \mathbb{Z}_{\ell \cdot k} \rightarrow \mathbb{Z}_{k}$ of \cite[page 472]{MorSul}.

To do so, consider an arbitrary face $F^l$ as in the previous subsection and let $\sigma_i$ be an arbitrary $(m-1)$-dimensional face of $F^l$. Fix $\delta_c'>0$, to be determined, and $\delta_c\in (0,\delta_c')$. The goal is to modify $\tilde{P}$ in a neighborhood of $\sigma_i\setminus B_{\delta_c} (\mathcal{K}^{m-2})$ to a new mod $k$ current $P'$ in the same homology class, close to it in terms of mass, support and flat norm, and with the property that $P'$ is a $\mathbb{Z}_k$-smooth manifold in that neighborhood. The neighborhood in which we will perturb $\tilde{P}$ is of the form $B_{\lambda_i} (\sigma_i)\setminus B_{\delta_c} (\mathcal{K}^{m-2})$. First of all, given the structure of $\tilde{P}$ obtained in the previous subsection, if $\lambda_i$ is sufficiently small, there is an open subset $\Delta_i\subset \mathbb R^{m-1}$ and a smooth parametrization 
\[
\Phi : \Delta_i \times B_{\lambda_i}^{n+1} \to \mathcal{M}
\]
of a normal neighborhood $\mathcal{N}_i$ of $\sigma_i \setminus B_{\delta_c} (\mathcal{K}^{m-2})$ with thickness $\lambda_i$ and with the property that $\spt (\tilde{P}\res \mathcal{N}_i)$ can be described in the following way. There are a finite number $\ell \cdot k$ of distinct unit vectors $w_1, \ldots , w_{\ell \cdot k} \in \partial B_1^{n+1} \subset \mathbb R^{n+1}$ such that, if we let 
\[
\Lambda_q = \{(x, s w_q) : x \in \Delta_i, 0 < s < \lambda_i\}\, ,
\]
then $\tilde{P} \res \mathcal{N}_i = \sum_{q =1}^{\ell \cdot k} \epsilon_{q} \Phi_\sharp \llbracket \Lambda_q \rrbracket$, where $\epsilon_q$ takes values in $\{-1,1\}$; note that we know the number of unit vectors is a multiple of $k$ since $\tilde{P}$ has no boundary mod $k$ in $\mathcal{N}_i$. This additionally tells us that for exactly $\ell_1 \cdot k$ of the $\ell\cdot k$ terms in the sum, the numbers $\epsilon_q$ equal $1$, while the remaining $\ell_2\cdot k$ equal $-1$,  with $\ell_1 + \ell_2=\ell$. 

We partition the $(m-1)$-dimensional faces from $\{\Lambda_q\}_{q=1}^{\ell\cdot k}$ into $\ell$ groups $ G_1,\dots,G_\ell$ such that $|G_j|=k$ and so that the first $\ell_1$ groups contain only positively-oriented faces and the remaining $\ell_2$ groups contain only negatively-oriented faces. Note that the normal space to $\sigma_i$ has dimension $n+1$ and since $n\geq2$, it has always dimension at least 3. In a sufficiently small normal slice through an interior point of $\sigma_i$, each $m$-dimensional face appears as a ray in direction $w_q$, for a total of $\ell \cdot k$ rays. For each $j=1,\dots,\ell$, label the independent normal directions $w_q$ corresponding to the faces in $G_j$ by $w_{a,j}$, $a=1,\dots,k$. Now choose distinct points $p_1,\dots,p_\ell \in \partial B_{1/2}^{n+1}$, and replace each line segment $[0,w_{a,j}]$ with the ones given by $[p_j,w_{a,j}]$. Since $\dim \R^{n+1}\geq 3$, a generic choice of the $p_j$'s make the segments $[p_j,w_{a,j}]$ disjoint, except at their prescribed common endpoints $w_{a,j} \in 	\partial B_1^{n+1}$, where one possibly creates a singularity. However, we can easily get rid of these singularities by smooth embedded arcs that agree with the original radial segment near each $w_{a,j}$; call each smooth arc $\gamma_{a,j} \subset\overline B_1^{n+1}$, for $a=1,...,k$ and $j=1,...,\ell$, joining $p_j$ to $w_{a,j}$, which agrees with the original radial segment near $w_{a,j}$. We may arrange that the arcs remain straight near $p_j$ and are mutually disjoint apart from the common endpoint $p_j$ for arcs belonging to the same group.

Finally, we let $\tau_t: \mathbb R^{n+1} \to \mathbb R^{n+1}$ be the homothety $y\mapsto t y$. We are now ready to define a replacement for $\tilde{P}\res \mathcal{N}_i$. For $0\leq t<\lambda_i$, define

$$ \Gamma^i_{a,\,j,t} := \tau_t(\gamma_{a,\,j}) \cup \{r \, w_{a,\,j}:t\le r\leq\lambda_i\}. $$

In other words, $\Gamma^i_{a,\,j,t}$ consists of the scaled connecting arc together with the unchanged radial portion; by construction these two sets join smoothly for $t\in (0,\lambda_i)$. We then fix a smooth compactly supported function $\psi_i:\mathbb R^{m-1} \to [0,\infty)$ which is positive on $\Delta_i$ and vanishes on $\partial \Delta_i$, a small positive number $\kappa_i$ (to be determined), and we define the sheets $$ S^i_{a,\,j} := \left\{ (x,y):x\in\Delta_i,\; y\in\Gamma^i_{a,\,j,\kappa_i\psi_i(x)} \right\}. $$

With orientations chosen consistently with the original sheets, we can now define 

$$\Sigma^i:= \sum_{j=1}^{\ell}\epsilon_j \sum_{a=1}^{k}\llbracket S^i_{a,\,j}\rrbracket,$$ with $\epsilon_j$ equal to +1 for $j\le\ell_1$, and $-1$ for $\ell_1<j\le\ell$.

Finally we can define $$P^i:=\Phi_\sharp\Sigma^i,$$ which satisfies $\partial P^i = \partial (\tilde{P}\res \mathcal{N}_i)$ mod $k$. Moreover we can make $\mathbb{F} (P^i - \tilde{P}\res \mathcal{N}_i)$ and $\mathbb M (P^i) - \mathbb M (\tilde{P}\res \mathcal{N}_i)$ smaller than any desired threshold by choosing $\kappa_i$ sufficiently small. Finally, note that $\Sigma_i$ is $\mathbb{Z}_k$-smooth in $\mathcal{N}_i$ since, along each new joining locus $$Y_j^i:= \{(x,\kappa_i\psi_i(x)p_j):x \in \Delta_i\},$$ exactly $k$ $m$-dimensional faces of the same sign join, therefore yielding the local $\mathbb{Z}_k$-smooth manifold model.

We next enumerate all the $(m-1)$-dimensional faces $\sigma_i$ of all the $m$-dimensional faces $F^l$ as $\sigma_1, \sigma_2, \ldots , \sigma_N$. We choose our parameters in such a way that the sets $\mathcal{N}_i$ are pairwise disjoint. Our final current $P'$ is then defined as
\[
P' := \sum_i P^i + \tilde{P} \res \mathcal{M} \setminus \bigcup_i \mathcal{N}_i\, .
\]
By construction, $P'\res \mathcal{M}\setminus B_{\delta_c}(\mathcal{K}^{m-2})$ is $\Z_k$-smooth. Moreover, by choosing the parameters $\delta_c',\lambda_i, \kappa_i$ accordingly depending on $\eps_c$ and combining with the bounds $(a_0)$-$(c_0)$ that we have already established, $P'$ is homologous to $\tilde P$. We also achieve the estimates $(a_1)$ and $(b_1)$, since we can make $\spt (P')$ arbitrarily close to $\spt (\tilde{P})$, $\mathbb{M} (P')$ arbitrarily close to $\mathbb{M} (\tilde{P})$, and $\mathbb{F} (P'-\tilde{P})$ arbitrarily small. Finally, the estimate $(c_1)$ on $\|P'\| (B_{\delta_c'} (\mathcal{K}^{m-2}))$ is established analogously to that in \cite{ABCD}. More precisely, first note that we have
\begin{align*}
&\|P'\| (\mathcal{M})\leq \|P\| (\mathcal{M}) + 2 \varepsilon_c\,;\\
&\|P'\| \big(\mathcal{M}\setminus B_{\delta_c'} (\mathcal{K}^{m-2})\big)\geq\|P\| \big(\mathcal{M}\setminus B_{\delta_c'} (\mathcal{K}^{m-2})\big)\, .
\end{align*}
Hence,
\begin{align*}
\|P'\| \big(B_{\delta'_c}(\mathcal{K}^{m-2})\big) &= \|P'\| (\mathcal{M}) - \|P'\| \big( \mathcal{M}\setminus B_{\delta_c'} (\mathcal{K}^{m-2})\big)\\
&\leq \|P\| (\mathcal{M}) + 2\varepsilon_c - \|P\| \big(\mathcal{M}\setminus B_{\delta_c'} (\mathcal{K}^{m-2})\big) \\
&= \|P\| \big(B_{\delta_c'} (\mathcal{K}^{m-2})\big) + 2\varepsilon_c
\end{align*}
for every fixed choice of $\delta_c\in (0,\delta_c')$. So, as long as $\delta_c'$ is additionally chosen small enough to ensure that $\|P\| (B_{\delta_c'} (\mathcal{K}^{m-2})) \leq \varepsilon_c$, we deduce the desired estimate.
We finally note that the same arguments can be easily improved to ensure that the mod $k$ integral cycle $P'$ can be chosen so that its singularities are all contained in $\mathcal{K}^{m-2}$.
\end{proof}

\begin{remark}
Note that $P'$ is an embedded $\Z_k$-smooth manifold $(M,\delta M)$ outside of $\mathcal{K}^{m-2}$ with trivial normal bundle $\nu(M) = \varepsilon^{n-1} \oplus \varepsilon^1$ and $\nu(\delta M)=\varepsilon^{n-1} \oplus \varepsilon^2$ in $\mathcal{M}\setminus \mathcal{K}^{m-2}$, where the $\varepsilon^1$ direction is prescribed near each binding as the infinitesimal rotation direction of the ``sheets''. This gives rise to a map $$\tilde{f}: \mathcal{M}\setminus \mathcal{K}^{m-2} \rightarrow S^{n-1} \wedge M(\Z_k,1) \simeq M(\Z_k,n) $$ with values in the Moore space $M(\Z_k,n)$, which is the classifying space for framed embedded $\Z_k$-smooth submanifolds, pulling back the fundamental class to the restriction of the Poincaré dual of $\tau$ to $\mathcal{M}\setminus \mathcal{K}^{m-2}$. Note also that, from the standard inclusion of the $(n-1)$-sphere into $MSO(n-1)$, we have $S^{n-1} \wedge M(\Z_k,1) \xrightarrow{i \wedge id} MSO(n-1) \wedge M(\Z_k,1)$ and hence obtain a map $$f: \mathcal{M}\setminus \mathcal{K}^{m-2} \rightarrow MSO(n-1) \wedge M(\Z_k,1)$$ pulling back the fundamental class to the restriction of the Poincaré dual of $\tau$ to $\mathcal{M}\setminus \mathcal{K}^{m-2}.$
\end{remark}

\subsection{Squeezing lemma}

The next ingredient we need for the proof of Theorem \ref{t:1} is the following mod $k$ version of the \emph{squeezing} lemma in \cite[Section 4]{ABCD}. Here, we are given two $m$-dimensional mod $k$ integral cycles $S$ and $R$ which agree outside of a sufficiently small neighborhood of the $(m-2)$-dimensional skeleton $\mathcal{K}^{m-2}$. It is then possible to show that $S$ and $R$ represent the same homology class; there is a smooth deformation $R'$ of $R$ which is close, in terms of mass and in flat norm, to $S$ and $R'$ coincides with $S$ outside a slightly larger neighborhood of $\mathcal{K}^{m-2}$.

\begin{pro}\label{p:squash}
Let $m$ and $\mathcal{M}$ be as in Assumption \ref{a:1} and $\mathcal{K}$ a smooth triangulation of $\mathcal{M}$. Then for every $\varepsilon_d >0$ and every $\eta_d>0$ there exists $\delta_d(\varepsilon_d, \eta_d, \mathcal{K}, \mathcal{M})>0$ with the following property. Suppose that $S$ and $R$ are $m$-dimensional mod $k$ integral cycles in $\mathcal{M}$ and that 
\[
S\res\mathcal{M}\setminus B_{\delta_d}(\mathcal{K}^{m-2})=R\res\mathcal{M}\setminus B_{\delta_d}(\mathcal{K}^{m-2})\, .
\]
Then $S$ and $R$ are homologous. Moreover, there exists a mod $k$ integral cycle $R'$ in the same homology class and a diffeomorphism $\Phi$ of $\mathcal{M}$ with the following properties:
\begin{enumerate}\itemsep0.2em
\item $\mathbb{M}(R') \leq (1+\varepsilon_d) \mathbb{M}(S)$;
\item $\mathbb{F}(R'-S)\leq C \big(\varepsilon_d \mathbb{M} (S)+2\|S\| (B_{\eta_d} (\mathcal{K}^{m-2}))\big)^{\frac{m+1}{m}}$, with $C=C (\mathcal{M})$;
\item $R'\res\mathcal{M}\setminus B_{\eta_d}(\mathcal{K}^{m-2})=S\res\mathcal{M}\setminus B_{\eta_d}(\mathcal{K}^{m-2})$;
\item $\Phi$ is in the isotopy class of the identity and $R'= \Phi_\sharp R$.
\end{enumerate}
\end{pro}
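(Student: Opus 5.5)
The plan is to follow the proof of the squeezing lemma \cite[Proposition 4.3]{ABCD} essentially verbatim. The point is that every ingredient is either a statement about diffeomorphisms of $\mathcal{M}$, which push forward mod $k$ currents exactly as they push forward integral ones, or a statement about homology of the complement of a codimension-2 set. Both carry over to $\mathbb{Z}_k$ coefficients.

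\emph{Homology.} First I will show that $S$ and $R$ are homologous. The difference $S-R$ is a mod $k$ integral cycle supported in $\overline{B}_{\delta_d}(\mathcal{K}^{m-2})$. For $\delta_d$ small, this closed neighborhood deformation retracts, via a Lipschitz retraction $\rho$, onto $\mathcal{K}^{m-2}$. Then the mod $k$ homotopy formula \cite[4.2.26]{Federerbook} gives
\[
S-R=\rho_\sharp(S-R)+\partial Q\quad \text{mod } k .
\]
The first term is an $m$-dimensional mod $k$ flat chain supported in an $(m-2)$-dimensional set, so it vanishes. Hence $S-R$ is a mod $k$ boundary. Equivalently, one can argue via \eqref{e:les} that $H_m$ of a neighborhood of a codimension-2 skeleton is zero.

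\emph{Squeezing.} Next I build $\Phi$ as in \cite{ABCD}. Choose a smooth isotopy $\Phi_t$ of $\mathcal{M}$ with three properties:
\begin{itemize}
\item it equals the identity outside $B_{\eta_d}(\mathcal{K}^{m-2})$;
\item it maps $B_{\delta_d}(\mathcal{K}^{m-2})$ into an arbitrarily thin neighborhood of $\mathcal{K}^{m-2}$, by radially squeezing in the normal directions of each open cell of $\mathcal{K}^{m-2}$ and proceeding inductively over the skeleta;
\item it has uniformly bounded Lipschitz constant in the directions tangent to the cells.
\end{itemize}
Set $R'=\Phi_\sharp R$. Then $R'$ lies in the same homology class (isotopic to the identity), and (4) holds.

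Since $\Phi$ is the identity off $B_{\eta_d}(\mathcal{K}^{m-2})$, and $R=S$ off $B_{\delta_d}(\mathcal{K}^{m-2})$ with $\delta_d<\eta_d$, we have $R'=S$ off $B_{\eta_d}(\mathcal{K}^{m-2})$, which is (3).

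For the mass bound (1), the region $B_{\eta_d}\setminus B_{\delta_d}$ is handled by $S$ itself and the map $\Phi$ has controlled Jacobian there. Inside $B_{\delta_d}$, the $m$-dimensional Jacobian of $\Phi$ is forced to be tiny, because $m$-planes are squeezed by a factor $\approx\delta/\delta_d$ in at least two normal directions, while only $m-2$ tangent directions survive. Choosing $\delta_d$ depending on $\varepsilon_d,\eta_d,\mathcal{K},\mathcal{M}$, the bound $\mathbb{M}(R')\le(1+\varepsilon_d)\mathbb{M}(S)$ follows. Here the mod $k$ mass is bounded by the integral mass of any representative, so one works with representatives mod $k$.

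\emph{Flat estimate and the main obstacle.} For (2), the cycle $R'-S$ is supported in $B_{\eta_d}(\mathcal{K}^{m-2})$ and has mass at most $\varepsilon_d\mathbb{M}(S)+2\|S\|(B_{\eta_d}(\mathcal{K}^{m-2}))$. Since it is a mod $k$ boundary, the mod $k$ isoperimetric inequality \cite[(4.2.10)$^\nu$]{Federerbook} gives a filling with mass $\le C\,\mathbb{M}(R'-S)^{(m+1)/m}$, which yields (2) with $C=C(\mathcal{M})$. Passing from $\mathbb{R}^N$ to $\mathcal{M}$ is done via a Lipschitz retraction onto $\mathcal{M}$.

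The main obstacle is the quantitative construction of $\Phi$ near the lower-dimensional cells, where the normal squeezing maps of different strata must be glued while keeping the tangential Lipschitz bounds uniform. I would take this construction directly from \cite[Proposition 4.3]{ABCD}, since it involves no coefficient group at all.
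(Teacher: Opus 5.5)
Your proposal is correct and takes the same route as the paper, which simply states that the proof of \cite[Proposition 4.3]{ABCD} carries over verbatim to mod $k$ cycles: the homology claim via a Lipschitz retraction onto $\mathcal{K}^{m-2}$ and the mod $k$ homotopy formula, an isotopy-generated squeezing diffeomorphism for (1), (3) and (4), and the isoperimetric inequality (or the trivial bound $\mathbb{F}\leq\mathbb{M}$ when the mass is large) for (2). The one point you leave implicit, and should state explicitly, is that $\delta_d$ is fixed in advance only to keep the Jacobian of $\Phi$ below $1+\varepsilon_d$ on $B_{\eta_d}(\mathcal{K}^{m-2})\setminus B_{\delta_d}(\mathcal{K}^{m-2})$, whereas the squeezing factor, and hence $\Phi$ itself, must be chosen depending on the uncontrolled quantity $\|R\|(B_{\delta_d}(\mathcal{K}^{m-2}))$.
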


The proof of Proposition \ref{p:squash} follows \emph{verbatim} that of \cite[Proposition 4.3]{ABCD} (and has already been recalled in the mod 2 version in \cite[Proposition 4.3]{nonorientato}); we hence state the proposition without rewriting the proof.

\section{Proof of Theorem \ref{t:1}}\label{s:proofofmain}

The final key ingredient for the the proof of our main theorem is the following.

\begin{pro}\label{p:4-equivalence}
The map $h: MSO \wedge M\mathbb{Z}_k \rightarrow H\mathbb{Z}_k$ representing the mod $k$ Thom class is a $4$-equivalence of spectra.
\end{pro}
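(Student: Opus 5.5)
The plan is to deduce this from the integral statement that the Thom class $u: MSO \rightarrow H\mathbb{Z}$ is a $4$-equivalence of spectra, combined with Proposition \ref{p:equivalences}(ii) applied to smashing with the Moore spectrum. The integral statement is essentially the stable form of the $(n+4)$-equivalence $MSO(n)\rightarrow K(\mathbb{Z},n)$ of \cite[Lemma 5.2]{ABCD}.

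\textbf{Step 1.} Check that $u$ is a $4$-equivalence. Thom's computation gives $\pi_i(MSO)=\Omega^{SO}_i$, which equals $\mathbb{Z}$ for $i=0$, vanishes for $i=1,2,3$, and equals $\mathbb{Z}$ (generated by $\mathbb{CP}^2$) for $i=4$. Under $u$, the group $\pi_0(MSO)=\mathbb{Z}$ maps isomorphically onto $\pi_0(H\mathbb{Z})$, since the Thom class restricts to a generator on the bottom cell. All higher homotopy of $H\mathbb{Z}$ vanishes. So $u_*$ is an isomorphism for $i<4$ and trivially surjective for $i=4$.

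To avoid citing bordism groups, one can instead apply Proposition \ref{p:equivalences}(iii). The maps $MSO(n)\rightarrow K(\mathbb{Z},n)$ are $(n+4)$-equivalences for large $n$, by \cite[Lemma 5.2]{ABCD}. Hence $u$ is a $4$-equivalence.

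\textbf{Step 2.} Smash with $M\mathbb{Z}_k$. The Moore spectrum satisfies $\pi_i(M\mathbb{Z}_k)=0$ for $i<0$, so it is $(-1)$-connected. By Proposition \ref{p:equivalences}(ii), the map $u\wedge \mathrm{id}: MSO\wedge M\mathbb{Z}_k \rightarrow H\mathbb{Z}\wedge M\mathbb{Z}_k$ is a $(-1+4+1)=4$-equivalence.

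\textbf{Step 3.} Identify $H\mathbb{Z}\wedge M\mathbb{Z}_k \simeq H\mathbb{Z}_k$. Write $M\mathbb{Z}_k$ as the cofiber of $k: S\rightarrow S$. Smashing this cofiber sequence with $H\mathbb{Z}$ gives the cofiber of $k: H\mathbb{Z}\rightarrow H\mathbb{Z}$. From the long exact sequence of homotopy groups, this cofiber has only one nonzero homotopy group, namely $\pi_0=\mathbb{Z}/k$. A spectrum with a single homotopy group in degree $0$ is equivalent to the corresponding Eilenberg--MacLane spectrum, so the cofiber is $H\mathbb{Z}_k$.

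Next, check that the composite $MSO\wedge M\mathbb{Z}_k\rightarrow H\mathbb{Z}_k$ is the mod $k$ Thom class $h$. It is the reduction of $u$, and it is an isomorphism on $\pi_0$, where both sides are $\mathbb{Z}_k$. Maps $MSO\wedge M\mathbb{Z}_k\rightarrow H\mathbb{Z}_k$ correspond to $H^0(MSO\wedge M\mathbb{Z}_k;\mathbb{Z}_k)$. Since $MSO\wedge M\mathbb{Z}_k$ is $(-1)$-connected, this group is $\mathrm{Hom}(\pi_0,\mathbb{Z}_k)$, so a map to $H\mathbb{Z}_k$ is determined by what it does on $\pi_0$. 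Hence the composite equals $h$, up to a unit of $\mathbb{Z}_k$, which does not affect connectivity.

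\textbf{Main obstacle.} The delicate step is Step 1, the exact connectivity of $u$. One has to make sure that the stable $4$-equivalence genuinely follows from the unstable $(n+4)$-equivalence, or equivalently that $\Omega^{SO}_1,\Omega^{SO}_2,\Omega^{SO}_3$ all vanish. One also has to keep track of the grading conventions in Proposition \ref{p:equivalences}(ii), so that the connectivity is not lost by one. As a sanity check on sharpness, in degree $4$ the group $\pi_4(MSO\wedge M\mathbb{Z}_k)$ contains $\mathbb{Z}_k$, coming from $\mathbb{CP}^2$, so $h$ is not a $5$-equivalence.
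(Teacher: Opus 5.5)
Your proposal is correct and follows essentially the paper's argument: the paper also gets the $4$-equivalence $MSO\to H\mathbb{Z}$ from the unstable $(n+4)$-equivalences $MSO(n)\to K(\mathbb{Z},n)$ via Proposition~\ref{p:equivalences}(iii), and then applies Proposition~\ref{p:equivalences}(ii) with the $(-1)$-connected Moore spectrum together with the equivalence $H\mathbb{Z}\wedge M\mathbb{Z}_k\simeq H\mathbb{Z}_k$. Your Step 3 additionally justifies that equivalence (via the cofiber of multiplication by $k$) and the identification of the composite with $h$, both of which the paper takes for granted.
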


\begin{proof}
Recall from \cite[Lemma 5.2]{ABCD} that the map $H : MSO(n) \rightarrow K(\Z,n)$ representing the Thom class $u \in H^{n}(MSO(n),\Z)$ is an $n+4$-equivalence for any $n \geq 1$. By $(iii)$ in Proposition \ref{p:equivalences}, the map of spectra $$F: MSO \rightarrow H\Z$$ is a 4-equivalence. Since the Moore spectrum is connected and $\varphi: H\Z \wedge M\Z_k \rightarrow H\Z_k$ is an equivalence, we conclude by $(ii)$ in Proposition \ref{p:equivalences} that the map $h\simeq \varphi \circ (F\wedge id)$ representing the mod $k$ Thom class $$MSO \wedge M\Z_k \xrightarrow{F\wedge id} H\Z \wedge M\Z_k \xrightarrow{\varphi} H\Z_k$$ is a $4$-equivalence of spectra.
\end{proof}

We are now ready to prove the main theorem of this article.

\begin{proof}[Proof of Theorem \ref{t:1}]

Fix $ \varepsilon_c > 0$, whose choice will be specified later, and a mod $k$ integral cycle $T$ in a homology class $\tau\in H_m (\mathcal{M}, \mathbb Z_k)$. First of all we apply Proposition \ref{p:poly_approx_prescribedsing} to find a sufficiently small $\delta'_c$, a suitable triangulation $\mathcal{K}$ of $\mathcal{M}$ and a new mod $k$ integral cycle $P'=:S$ with the property that $S$ is in the same homology class of $T$ and the following facts hold:
\begin{itemize}
\item $\mathbb{M} (S) \leq \mathbb{M} (T) + 3\varepsilon_c$ and $\mathbb{F} (S-T) < 3 \varepsilon_c$;
\item $\|S\| (B_{\delta_c'} (\mathcal{K}^{m-2})) \leq 3\varepsilon_c$;
\item $B_{\delta_c'} (\mathcal{K}^{m-2})$ is homotopy equivalent to $\mathcal{K}^{m-2}$;
\item $S\res \mathcal{M} \setminus \mathcal{K}^{m-2}= \llbracket \Gamma \rrbracket$ for a smooth submanifold $\Gamma$.
\end{itemize}

Now define $Q_2 := \mathcal{M} \setminus \mathcal{K}^{m-2}$.

The current $\llbracket \Gamma \rrbracket$ obtained from Proposition \ref{p:poly_approx_prescribedsing} is (when restricted to $Q_2$ and not relabeled) a smooth properly embedded $\Z_k$-submanifold of $Q_2$. Denoting by $x \in H^{n}(\mathcal{M})$ the Poincaré dual of $\tau$, note that its restriction $x_{|Q_2} \in H^n(Q_2)$ to $Q_2$ is the Poincaré dual of a Borel-Moore homology class which is represented by the smooth properly embedded submanifold $\Gamma \subset Q_2$. By the (locally finite version of) the Thom collapse construction of Theorem \ref{t:criterionmodk} applied to $\Gamma$, together with Lemma \ref{l:MSOtoOmega}, there exists a map $$F: Q_2 \rightarrow MSO(n-1)\wedge M(\Z_k,1)$$ such that $F^*(u) = x_{|Q_2}$, where $u \in H^n(MSO(n-1)\wedge M(\Z_k,1))$ is the mod $k$ Thom class; $F$ is smooth and by a two-stage transversality procedure as in Theorem \ref{t:criterionmodk} we obtain 
\[
    F^{-1}(BSO(n-1) \times \{\hat{e_2}\})  \cup F^{-1}(BSO(n-1) \times \{\hat{e}_1\})=\Gamma,
\]
which is the smooth part of $S$. Denote $$Q_5:=\mathcal{M} \setminus \mathcal{K}^{m-5}.$$
Given the $n$-dimensional cohomology class $x \in H^{n}(\mathcal{M})$ which is the Poincaré dual of $\tau$, we consider its restriction to $Q_5$, that is $x_{|Q_5} \in H^n(Q_5)$; note that $x_{|Q_5}$ can be represented by a continuous map $$g:Q_5 \rightarrow K(\mathbb{Z}_k,n)$$ (in a suitable homotopy class of continuous maps) pulling-back the fundamental class of $K(\mathbb{Z}_k,n)$ to itself, \textit{i.e.} $g^*(\iota)=x_{|Q_5}.$ Since $Q_5$ has the homotopy type of an $n+4$ dimensional complex, we can apply the $4$-equivalence of Proposition \ref{p:4-equivalence} to obtain a lift from $K(\mathbb{Z}_k,n)$ to $\Omega^{\infty}\Sigma^{n}(MSO\wedge M\Z_k)$ pulling back the mod $k$ Thom class to $x_{|Q_5}.$


In addition, since $n \geq\min\{m+1,5\}$, the $2n-1$-equivalence between $MSO(n-1) \wedge M(\Z_k,1)$ and $\Omega^{\infty}\Sigma^{n}(MSO\wedge M\Z_k)$ from Lemma \ref{l:MSOtoOmega} allows us to lift the map again and obtain a map

$$f: Q_5 \rightarrow MSO(n-1) \wedge M(\Z_k,1)$$

pulling back the mod $k$ Thom class to $x_{|Q_5}.$ By Theorem \ref{t:criterionmodk}, we can perturb $f$ to a new smooth map so that by two-stage transversality we obtain a properly embedded $\Z_k$-smooth $m$-dimensional submanifold; denote it as $\Sigma.$ Moreover, $\Sigma$ represents the relative Poincaré dual of $x_{|Q_5}$, which equals $i_*(\tau) \in H^{BM}_m(Q_5)$ where $i_*:H_m(\mathcal{M}) \rightarrow H^{BM}_m(Q_5).$ 

Moreover, note that we have immediately obtained a mod $k$ integral current $N:=\llbracket \Sigma \rrbracket_{\modk}$ such that $N\res \mathcal{M}\setminus \mathcal{K}^{m-5}$ is induced by some embedded $\Z_k$-smooth submanifold $\Sigma$ (with multiplicity one). Moreover, $\spt(\partial N) \subset \mathcal{K}^{m-5}$ and in particular by Federer's support theorem, $\partial N=0$, namely $N$ is a mod $k$ integral cycle.

Now consider the two maps $F: Q_2 \rightarrow MSO(n-1) \wedge M(\Z_k,1)$ and $f: Q_5 \rightarrow MSO(n-1) \wedge M(\Z_k,1)$ that by two-step transversality give us $\Gamma$ and $\Sigma \cap Q_2$ respectively. If we consider the restriction of $f$ to $Q_2 \subset Q_5$, we obtain a new map $f|_{Q_2}: Q_2 \rightarrow MSO(n-1) \wedge M(\Z_k,1)$ that pulls-back the universal Thom class to $x_{|Q_2}$. 

We observe that $Q_2$ has the homotopy type of an $n+1$-complex, so that by Proposition \ref{p:4-equivalence} we can conclude that $F$ and $f_{|Q_2}$ are homotopic: the homotopy can be taken smooth by \cite[Lemme IV.5]{Thom54}. In particular, we define the smooth homotopy $H:[0,1] \times Q_2$ such that $H(0,x)= f_{|Q_2}(x)$ and $H(1,x)= F(x)$. 

In a small smooth neighborhood $\mathcal{C}$ of $\mathcal{K}^{m-2}$ in $Q_2$, which we identify with $\partial Q_2\times (0,1]$, we may then glue the maps $f$ and $F$ together. Using the notation $x=(y,s)\in \partial Q_2\times (0,1] = \mathcal{C}$ and after defining a smooth function $\varphi:[0,1] \to [0,1]$ which is identically equal to $0$ in a neighborhood of $0$ and identically equal to $1$ in a neighborhood of $1$, we set  
\begin{equation}\label{e:homotopy}
\widehat{f}(x):=\begin{cases}
       F(x)  & \quad if  \, \, x\in Q_2\setminus \mathcal{C},\\    
       H\left(x,\varphi (s)\right)  & \quad if\,\,  x\in \mathcal{C},   \\
        f(x)  & \quad if\,\, x\in Q_5\setminus Q_2  
\end{cases}\end{equation}

Note that we can make $\widehat{f}$ transverse locally while maintaining the property that it coincides with $F(x)$ on $\mathcal{M}\setminus U(\mathcal{K}^{m-2})$, where $U(\mathcal{K}^{m-2})$ is a small smooth neighborhood of $\mathcal{K}^{m-2}$ and without changing the homotopy class.

Consider now the submanifold $\Sigma'$ of $\mathcal{M}\setminus \mathcal{K}^{m-5}$ which is obtained by the two-step transversality procedure of $\widehat{f}$ on $Q_5$. This is a $\Z_k$-smooth submanifold coinciding with $\Gamma$ on $\mathcal{M}\setminus U(\mathcal{K}^{m-2})$.

Moreover, $R= \llbracket \Sigma'\rrbracket_{\modk}$ is a mod $k$ integral current with finite mass and such that $\spt (\partial R)\subset \mathcal{K}^{m-5}$; in particular it is a cycle by Federer's support theorem. Observe also that $R-S$ is supported, by construction, in $U (\mathcal{K}^{m-2})$, which is homotopy equivalent to $\mathcal{K}^{m-2}$, and thus has trivial $m$-homology. In particular $R-S$ is a mod $k$ boundary, namely $R$ and $S$ belong to the same mod $k$ homology class. 

We now apply Proposition \ref{p:squash} to $S$ and $R$, noticing that the $\varepsilon_d$ in Proposition \ref{p:squash} is a parameter to be chosen in terms of the $\varepsilon$ of the statement of Theorem \ref{t:1}, and the $\eta_d$ in Proposition \ref{p:squash} is $\delta'_c$ here. This gives us a parameter $\delta_d$, which depends on $\varepsilon_d$ and $\delta'_c$. In turn we impose that $\tilde{\delta} \leq \delta_d$ so that we can apply Proposition \ref{p:squash}. Since $\varepsilon_d$ will be specified only in terms of $\mathbb{M}(T)$ and of $\varepsilon$ in the statement of Theorem \ref{t:1}, while $\delta'_c$ depends on $\varepsilon_c$, which will also be specified only in terms of $\mathbb{M}(T)$ and $\varepsilon$ in the statement of Theorem \ref{t:1}, the parameter $\tilde{\delta}$ can be taken smaller than $\delta_d$. We can then find a mod $k$ current $R':=\Phi_\sharp R$ for a smooth diffeomorphism $\Phi$ isotopic to the identity such that  
\begin{align}
\mathbb{M} (R') \leq (1+\varepsilon_d)\, \mathbb M (S) \leq (1+\varepsilon_d) (\mathbb{M} (T) + 3\varepsilon_c)\,\nonumber .
\end{align}
We therefore conclude that $R'$ is homologous to $R$, hence to $S$, and therefore to $T$. Moreover, if we choose
\begin{align}
&\varepsilon_d \,(3+\mathbb{M} (T)) < \frac{\varepsilon}{2} \qquad \mbox{and} \qquad
\,3\varepsilon_c < \frac{\varepsilon}{2}\,\nonumber ,
\end{align}
then $\mathbb{M} (R') \leq \mathbb{M} (T) + \varepsilon$. This establishes claim (1) of the theorem. Moreover,
\begin{align}
\mathbb{F} (T-R') &\leq 3\varepsilon_c + \mathbb{F} (S-R') 
\leq 3\varepsilon_c + C (\varepsilon_d
\,\mathbb{M} (S) + 2\|S\| (B_{\delta'_c}(\mathcal{K}^{m-2})))^{\frac{m+1}{m}}\nonumber\\
&\leq 3\varepsilon_c + C (\varepsilon_d (\mathbb{M} (T) +\varepsilon_c) + 6\varepsilon_c)^{\frac{m+1}{m}}\,\nonumber.
\end{align}
Therefore one may clearly make a suitable choice of $\varepsilon_d$ and $\varepsilon_c$ depending only on $\mathbb{M} (T)$ and $\varepsilon$ in order to show that $\mathbb{F} (T-R') \leq \varepsilon$, thus establishing claim (2). 

The proof of claim $(3)$ of Theorem \ref{t:1} follows the same lines; by assumption we know that $\tau$ is represented by an embedded $\Z_k$-smooth submanifold $\Sigma$ and hence, by Theorem \ref{t:criterionmodk} (together with Lemma \ref{l:MSOtoOmega}) there exists a map $g:\mathcal{M} \rightarrow MSO(n-1) \wedge M(\Z_k,1)$ pulling back the mod $k$ Thom class $u \in H^n(MSO(n-1) \wedge M(\Z_k,1))$ to the Poincaré dual of $\tau$. Substituting in the previous steps the map $f$ with this new map $g$, defined over the whole ambient space $\mathcal{M}$, and defining a similar homotopy to the one in \eqref{e:homotopy}, the result follows by applying Proposition \ref{p:squash} to $S$ and $\llbracket \Sigma \rrbracket_{\modk}$, where $S$ is the mod $k$ integral cycle denoted by $P'$ in Proposition \ref{p:poly_approx_prescribedsing}.

\end{proof}

We now deduce Theorem \ref{t:Mooreapproximation} as a corollary of Proposition \ref{p:poly_approx_prescribedsing} and the homotopy properties of the Moore space $M(\Z_k,n)$ for $k$ odd and in high codimensions $n\geq 3.$

\begin{proof}[Proof of Theorem \ref{t:Mooreapproximation}]
By Proposition \ref{p:poly_approx_prescribedsing}, we obtain in particular a map $$\tilde{f}: \mathcal{M}\setminus \mathcal{K}^{m-2} \rightarrow S^{n-1} \wedge M(\Z_k,1) \simeq M(\Z_k,n) $$ pulling back the fundamental class to the restriction of the Poincaré dual of $\tau$ to $\mathcal{M}\setminus \mathcal{K}^{m-2}$. On the other hand, by assumption we also have a map $$g:\mathcal{M} \rightarrow M(\Z_k,n)$$ pulling back the fundamental class on the Poincaré dual of $\tau.$ 

For $k$ odd we have that $\pi_{n}(M(\Z_k,n))\simeq \Z_k$ and $\pi_{n+1}(M(\Z_k,n))\simeq 0,$ \emph{cf.} \cite[Section 2]{Moorehomotopygroups}\footnote{The result is derived from computing the stable homotopy groups first. Then, since $M(\Z_k,n)$ is $n-1$-connected, by Freudenthal suspension theorem we obtain that the suspension is an isomorphism in degrees $n+1\leq 2n-2$, that is $n \geq 3$, see \cite[Corollary 9.4]{Baues}.}. Hence, the map $$h: M(\Z_k,n) \rightarrow K(\Z_k,n) $$representing the fundamental class is at least an $n+1$-equivalence when $k$ is odd and $n\geq 3.$ This allows $g_{|\mathcal{M}\setminus \mathcal{K}^{m-2}}$ to be homotopic to $\tilde{f}$ on $\mathcal{M}\setminus \mathcal{K}^{m-2}$, and hence conclude following the same lines of the proof of Theorem \ref{t:1}.    
\end{proof}

\section{Sharpness of the codimension 5 singular set}\label{s:sharpness}

In this section, we will demonstrate that the codimension 5 construction in Theorem \ref{t:1} is optimal (in the sense of the dimension of the singular set) in stable regimes.

We start by recalling the example of a mod $k$ homology class of dimension 6 which is not realizable by means of a submanifold. We will base this section on Kreck's theory of stratifolds and Sullivan's theory of geometric resolution of singularities. We recall that in \cite{SullivanLiverpool}, Sullivan described a procedure to resolve singularities of geometric cycles. It consists of a blow-up process to resolve singularities by inductively attaching generalized handles. He showed that for a geometric cycle $T$ with singularity locus $S_T$ of dimension $s$, the obstruction for resolving the singularity is a cycle in $H_s(S_T , \Omega_r)$, with values in a suitable cobordism group of dimension $r=\operatorname{dim} T -s -1$.

Instead, the theory of stratifolds provides a bordism theory $S H_*$ with a natural isomorphism to integral homology. In particular, a stratifold $S$ of dimension $m$ is a topological space with a sheaf of functions which divides the topological space into strata $S_0, \cdots, S_m$, where each stratum $S_i$ is an $i$-dimensional manifold. The singular part of the stratifold $S$ is given by the union $\sing(S):=\bigcup_{i=0}^{m-1} S_i$. They are useful since they allow for geometric descriptions of cycles. Recently, the theory of stratifolds has been generalized by \cite{3tizi} to $\Z_k$-coefficients, providing in the same article a geometric description of the singularities of mod $k$ homology classes that cannot be represented as images of $\Z_k$-manifolds. We briefly recall their results below, referring to \cite{3tizi} for an introduction to stratifolds and their $\Z_k$-analogs.

In particular, in \cite[Section 8]{3tizi} it is shown that the nonzero generator of $B\Z_3$ (where we recall that $B\Z_3$ is the classifying space of $\Z_3$, which can be realized by the infinite lens space $L^{\infty}_3$) $$\bar{z}_6\in H_6(B\Z_3, \Z_3)\simeq \mathbb Z_3$$ cannot be represented by a map from a $\Z_3$-manifold. In particular, this class can be represented by a $6$-dimensional mod 3 integral cycle $T$ with a $1$-dimensional stratum of non-removable singularities (which is a circle $S^1$) whose neighborhood is homeomorphic to a product $$\mathcal{S}_T \times C(\mathbb{C}\mathbb{P}^2),$$ where $C(\mathbb{C}\mathbb{P}^2)$ denotes the cone over $\mathbb{C}\mathbb{P}^2$. As in the oriented setting, this innate nature of these singularities turns out to be intrinsically linked to the well-known fact that $\mathbb{C}\mathbb{P}^2$ does not bound any compact oriented smooth $5$-dimensional manifold, as observed in \cite{SullivanLiverpool}.

In summary, we adapt this information and construct the following example, yielding the desired sharpness for the dimension of the topological singular set in Theorem \ref{t:1}.

\begin{example}\label{e:modknon}
Let $L:=L^d_3$ denote the $d$-dimensional lens space $S^d/\Z_3$. Assume $d$ is an odd integer and $d\geq 13$ (so that we are in the stable regime). Note that the inclusion into the infinite lens space gives a map $i: L \rightarrow B\mathbb{Z}_3$ inducing the isomorphism $i_*: H_6(L,\Z_3) \rightarrow H_6(B\mathbb{Z}_3, \mathbb Z_3)$. Then the generator $z= (i_*)^{-1}(\bar{z})$ is not representable by a $\Z_3$-manifold.

\end{example}

\begin{thm}\label{t:Thom_innatelysingular}
    Let $z \in H_6(L,\mathbb{Z}_3)$ be the mod 3 homology class of Example \ref{e:modknon} and fix a smooth triangulation $\mathcal{K}$ of $L$. Then it is impossible to find a representative $\Sigma$ for $z$ which is a $\Z_3$-smooth submanifold in the complement of the $0$-dimensional skeleton $\mathcal{K}^0$ of $\mathcal{K}$.
\end{thm}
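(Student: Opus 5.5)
We argue by contradiction: a representative of $z$ that is a $\mathbb{Z}_3$-smooth submanifold of $L\setminus\mathcal{K}^0$ would, after a local surgery near the finitely many vertices, give a closed $\mathbb{Z}_3$-manifold mapping to $L$ and representing $z$. Composing with $i$ would then represent $\bar z_6$, contradicting \cite[Section 8]{3tizi}.

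\textbf{Step 1: isolate the vertices.} Suppose $\Sigma\subset L\setminus \mathcal{K}^0$ is a properly embedded $\mathbb{Z}_3$-smooth $6$-dimensional submanifold with $\llbracket\Sigma\rrbracket\modk$ representing $z$. For each vertex $v\in\mathcal{K}^0$ we choose a small geodesic ball $B_r(v)$. Taking $r$ generic and applying the $\mathbb{Z}_k$-transversality of \cite[Chapter 1]{MorSul} (first to the Bockstein, then to the sheets, as in the two-stage procedure of Theorem \ref{t:criterionmodk}), $\Sigma$ meets $\partial B_r(v)\cong S^{d-1}$ transversally. The slice $\Lambda_v:=\Sigma\cap\partial B_r(v)$ is then a closed $5$-dimensional $\mathbb{Z}_3$-manifold. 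The truncation $\Sigma_0:=\Sigma\setminus\bigcup_v B_r(v)$ is a compact $\mathbb{Z}_3$-manifold with $\mathbb{Z}_3$-boundary $\bigsqcup_v\Lambda_v$. In particular $\Sigma$ is the cone-completion of $\Sigma_0$ near each vertex, so $z$ is represented by a $\mathbb{Z}_3$-manifold with isolated conical singularities whose links are the $\Lambda_v$.

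\textbf{Step 2: each link bounds.} This is a Sullivan-type resolution in the sense of \cite{SullivanLiverpool}. The obstruction to resolving a $0$-dimensional singular stratum lies in $H_0(\mathcal{K}^0,\Omega_5(\mathrm{pt},\mathbb{Z}_3))$. It therefore suffices to show that the $\mathbb{Z}_3$-bordism group $\Omega_5(\mathrm{pt},\mathbb{Z}_3)=\pi_5(MSO\wedge M\mathbb{Z}_3)$ vanishes. The cofiber sequence $MSO\xrightarrow{3}MSO\to MSO\wedge M\mathbb{Z}_3$ gives the exact sequence
\[
\Omega_5^{SO}\xrightarrow{3}\Omega_5^{SO}\to\pi_5(MSO\wedge M\mathbb{Z}_3)\to\Omega_4^{SO}\xrightarrow{3}\Omega_4^{SO}.
\]
Here $\Omega_5^{SO}\cong\mathbb{Z}_2$, on which multiplication by $3$ is an isomorphism, so the cokernel is $0$. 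Also $\Omega_4^{SO}\cong\mathbb{Z}$, generated by $\mathbb{CP}^2$, on which multiplication by $3$ is injective, so the kernel is $0$. Hence $\pi_5(MSO\wedge M\mathbb{Z}_3)=0$, and each $\Lambda_v$ bounds a compact $6$-dimensional $\mathbb{Z}_3$-manifold $W_v$.

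\textbf{Step 3: glue and conclude.} Define the closed $\mathbb{Z}_3$-manifold $\widehat\Sigma:=\Sigma_0\cup\bigsqcup_v W_v$. Let $\phi:\widehat\Sigma\to L$ be the inclusion on $\Sigma_0$ and the constant map to $v$ on $W_v$; this is continuous, because $\Lambda_v$ can first be radially contracted to $v$ inside $B_r(v)$. The map $\phi$ agrees with the cone-completion of $\Sigma$ away from the balls. Each $B_r(v)$ is contractible, and by excision $H_6(L,\mathbb{Z}_3)\to H_6(L,\bigcup_v B_r(v);\mathbb{Z}_3)$ is an isomorphism, since $d\ge 13$. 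Therefore $\phi_*[\widehat\Sigma]=z$. Composing with $i$, the singular $\mathbb{Z}_3$-manifold $(\widehat\Sigma,i\circ\phi)$ represents $\bar z_6\in H_6(B\mathbb{Z}_3,\mathbb{Z}_3)$, contradicting \cite[Section 8]{3tizi}. Note that this contradicts representability by any singular $\mathbb{Z}_3$-manifold, so the map need not be an embedding.

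\textbf{Main obstacle.} The step needing the most care is Step 1. One must make the slicing transverse in the $\mathbb{Z}_3$-category, so that $\Lambda_v$ is a genuine $\mathbb{Z}_3$-manifold, including near points where the Bockstein accumulates at $v$. One must also check that the homology class carried by the truncated and capped object is still $z$. The transversality follows from properness of $\Sigma$ in $L\setminus\mathcal{K}^0$ together with Sard's theorem applied to the distance function to $v$, restricted separately to the Bockstein and to the sheets. The Borel--Moore long exact sequence \eqref{e:les} identifies $H_6^{BM}(L\setminus\mathcal{K}^0)$ with $H_6(L)$ and so controls the class.
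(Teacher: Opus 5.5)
Your proposal is correct and follows the paper's route. The links of $\Sigma$ at the vertices are closed $5$-dimensional $\mathbb{Z}_3$-manifolds, and the vanishing of $\Omega_5(\mathrm{pt},\mathbb{Z}_3)$ kills the Sullivan obstruction in $H_0$. The resolved closed $\mathbb{Z}_3$-manifold representative then contradicts the non-representability of $\bar z_6$ from \cite{3tizi}.

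You supply more than the paper does: the Bockstein-sequence computation $\pi_5(MSO\wedge M\mathbb{Z}_3)=0$, and an explicit capping construction in place of the citation of Sullivan's Theorem D. Two small corrections. The isomorphism $H_6(L)\to H_6(L,\bigcup_v B_r(v))$ comes from the exact sequence of the pair with contractible balls, not from $d\ge 13$. To make $\phi$ continuous, you should insert a collar $\Lambda_v\times[0,1]$ mapped by the radial contraction.
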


\begin{proof}
  Denote by $T$ the 6-dimensional mod $k$ integral cycle representing $z$, and consider its singularity locus $\mathcal{S}_T$. Towards a proof by contradiction, assume that the cycle is a substratified set of a Whitney stratification of $L$ which only intersects the zero-skeleton $\mathcal{K}^0$ of a triangulation compatible with the stratification. For each 0-dimensional simplex in the cycle we consider its link, which is a 5-dimensional $\Z_3$-manifold. Since the obstruction to the resolution of singularities is an element of $$\omega_{T} \in H_0(\mathcal{S}_T, \Omega_5)$$ and the 5-dimensional $\Z_3$-cobordism group vanishes, by the $\Z_3$-manifold version of \cite[Theorem D]{SullivanLiverpool} the cycle can be resolved by blow-up to the lower-dimensional stratum, obtaining full resolution of singularities. This is clearly a contradiction.
\end{proof}

\section{On the setwise approximation of cycles}\label{s:setwise}

The aim of this section is to investigate the possibility of locally approximating the support of any integral or mod $k$ integral cycle by smooth or $\mathbb{Z}_k$-smooth submanifolds, respectively. We first construct an example of an area-minimizing cycle $T$ in a nontrivial integral homology class (admitting a smooth embedded representative and hence, by \cite[Theorem 1.1 (3)]{ABCD}, admitting a \emph{global} smooth approximation) of an oriented Riemannian manifold $\mathcal{M}$ whose support cannot be \emph{locally} approximated by a smooth submanifold, \emph{cf.} \cite[page 21]{AlmgrenBrowder88-91} and \cite[Chapter 4]{tesi} for preliminary discussions on this. We then adapt the strategy of this example to show that the same phenomenon can be replicated for mod $k$ minimizers of nontrivial $\Z_k$-homology classes with $\Z_k$-smooth manifold representatives. In particular, Example \ref{e:support1} corrects and formalizes previous attempts proposed by Almgren and Browder in \cite[page 21]{AlmgrenBrowder88-91} and \cite{Almgren90-93}, though using the same idea based on the so-called \emph{thickening} strategy developed by Thom in his resolution of Steenrod's problem in \cite[Chapitre III]{Thom54}.

We start by recalling Thom's example of an integral homology class of dimension 7 in a real analytic closed oriented manifold of dimension 14 which is not realizable as the continuous image of a submanifold, \textit{cfr.} \cite[Théorème III.9]{Thom54}.

\begin{example}[Thom]\label{ex:ThomLensSpaces}
For $i=1,2$ consider the lens space $L_i:=S^7/\mathbb{Z}_3$, which is the orbit space of the $7$-sphere with the free action of $\mathbb{Z}_3$ generated by the rotation. Let $v_i$ be the generator of $H^1(L_i,\mathbb{Z}_3)\simeq \mathbb{Z}_3$ and call $u_i=\beta_3(v_i) \in H^2(L_i,\mathbb{Z}_3)\simeq \mathbb{Z}_3$, where $\beta_3$ is the mod 3 Bockstein operation. Consider the real analytic, closed and oriented $14$-dimensional manifold $L:=L_1\times L_2$ and the following cohomology class, where the powers and $\cdot$ denote the cup product (viewing $H^*(L_i)$ as being embedded in $H^*(L)$): $$y=u_1 \cdot v_2 \cdot(u_2)^2 - v_1 \cdot (u_2)^3 \in H^7(L,\mathbb{Z}_3).$$ Note that $y$ is actually the reduction mod $3$ (denoted $\theta_3$) of an integral cohomology class, since $y=\beta_3(v_1 \cdot v_2 \cdot (u_2)^2)$ and hence $y=\theta_3(x)$ with $x \in H^7(L,\mathbb{Z})$ given by $x =\beta^*(v_1 \cdot v_2 \cdot (u_2)^2)$, where $\beta^*$ is the Bockstein operation, \textit{cf.} \cite[Remark 1.6]{ABCD}. Denoting by $z \in H_7(L,\mathbb{Z})$ its Poincaré dual homology class, we see that $z$ cannot be realized in $L$ by an embedded smooth submanifold since$$St_3^5(x)=\beta^*\circ \mathcal{P}_3^1(y)=\beta^*((u_1)^3 \cdot v_2 \cdot (u_2)^2) = (u_1 \cdot u_2)^3 \neq 0,$$
where $\mathcal{P}_3^1$ is the first Steenrod power for the prime 3.
\end{example}

\begin{example}\label{e:support1}
Let $P$ be a representative in Thom's 7-dimensional integral homology class $z \in H_7(L,\Z)$ of Example \ref{ex:ThomLensSpaces} that is assumed, without loss of generality, to have a fundamental class and multiplicity one on its top dimensional simplices\footnote{For instance by applying a similar construction of the codimension 2 smooth approximation theorem in \cite[Proposition 4.1]{ABCD}.}, so that we have a map $h: P \rightarrow L$ with $h_*[P]=z$. Let $$f: P \rightarrow S^7$$ be a map of degree 1, which can be constructed from $P$ simply by collapsing the complement of the interior of an $m$-dimensional face to the point at infinity of $S^7$. Let $\Sigma_f$ denote the mapping cylinder of $f$, with $\Sigma_0=P$ and $\Sigma_1=S^7$. We embed $\Sigma_f$ in a high-dimensional Euclidean space $\R^N$ in such a way $\Sigma_1$ is a smoothly embedded submanifold of $\R^N$. Take a compact smooth regular neighborhood $U$ of $\Sigma_f$ in $\R^N$ and let the ambient manifold $\mathcal{M}:= DU$ be defined as the double of $U$ along its smooth boundary $\partial U$. As the 7-dimensional integral homology class $\tau \in H_7(\mathcal{M},\Z)$ we take the image of that of $P$ (or, equivalently, of $\Sigma_1$) included in $\mathcal{M}$. By construction, $\tau$ is nontrivial in $\mathcal{M}$ since $f$ has degree one and $\mathcal{M}$ retracts onto $\Sigma_f$, and hence onto $\Sigma_1$; in addition, it admits a smooth embedded representative, given by the submanifold $\Sigma_1=S^7$. 

Fix a metric $(\mathcal{M},g)$ and let $\delta >0$ be a radius such that $B_{5\delta}(P) \subset \text{int } U$ and $B_{5\delta}(P)$ retracts onto $P$; denote by $r : B_{5\delta}(P) \rightarrow P$ the retraction map. We now change the metric in $\mathcal{M}$ to be very small inside the $\delta$-neighborhood of $P$, while leaving it unchanged outside the $3\delta$-neighborhood of $P$ in $\mathcal{M}$. More precisely, for $\varepsilon \in (0,1)$ we choose a smooth metric $g_{\varepsilon}$ such that $g_{\varepsilon}= \varepsilon g$ on $B_{\delta}(P)$ and $g_{\varepsilon}=g$ on $\mathcal{M}\setminus B_{3\delta}(P)$. In particular, we choose $\varepsilon$ sufficiently small so that $$\mathbb{M}_{g_{\varepsilon}}(P)\leq \frac{1}{4} \omega_7 \delta^7,$$ where $\mathbb{M}_{g_{\varepsilon}}(P)$ denotes the mass of $P$ with respect to $g_{\varepsilon}$ and $\omega_7$ the volume of the 7-dimensional unit ball. 

Let $Q$ be a minimizer in $\tau$ for $\Mbb_{g_{\varepsilon}}$. Suppose now that we can find $x \in \text{spt }Q \setminus B_{4\delta}(P).$ Since $B_{\delta}(x) \cap B_{3\delta}(P) = \emptyset$, then $Q$ is a local minimizer for $h$ in $B_{\delta}(x) $. Hence, by the monotonicity formula for mass ratios, we have that $$\mathbb{M}_{g_{\varepsilon}}(Q) \geq \mathbb{M}\left(Q \res B_{\delta}(x)\right) \geq \frac{1}{2} \omega_7 \delta^7 > \mathbb{M}_{g_{\varepsilon}}(P),$$ contradicting the minimality of $Q$. Therefore we conclude that $\text{spt }Q \subset B_{4\delta}(P),$ that is we are forcing any minimizer $Q$ to have support in a small neighborhood of $P$, and in particular inside $B_{5\delta}(P).$

By \cite[Theorem 1.1 (3)]{ABCD} we obtain a smooth embedded submanifold $\Sigma$ in $\tau$ close in the flat topology to $Q$, and such that $\mathbb{M}_{g_{\varepsilon}}(\Sigma) \leq \mathbb{M}_{g_{\varepsilon}}(Q) + \tilde{\varepsilon}$. If we could also conclude vicinity of the supports, that is $\Sigma \subset B^{g_{\varepsilon}}_{\delta}(\spt Q) \subset B_{5\delta}(P)$, then we would reach a contradiction. In fact, since $\llbracket \Sigma \rrbracket \in \tau$ and since the homomorphism $H_{7}(B_{5\delta}(P), \Z) \rightarrow H_{7}(\mathcal{M},\Z)$ is injective, then $\llbracket \Sigma \rrbracket$ is homologous to $P$ in $B_{5\delta}(P)$, and hence the retraction map would imply $r_*[\Sigma]=[P]$, deriving the contradiction that $z=h_*[P]=(h \circ r)_*[\Sigma]$ is represented as continuous image of the smooth manifold $\Sigma$.
\end{example}

The same thickening strategy can be adapted to the case of mod $k$ minimizing cycles, providing an example of an area-minimizing mod $k$ integral cycle in a nontrivial homology class (admitting a $\Z_k$-manifold representative) that cannot be approximated by a $\mathbb{Z}_k$-smooth manifold.

\begin{example}\label{e:support2}
Consider the $9$-skeleton $K:=K(\mathbb{Z},3)^{(9)}$ of the Eilenberg-MacLane space $K(\mathbb{Z},3)$, whose homology is isomorphic to that of $K(\mathbb{Z},3)$ up to degree $8$. The nontrivial class $z \in H_8(K,\Z_3)$ is not representable as image of a $\Z_3$-manifold; let $P$ be a mod 3 cycle representing $z$. Up to choosing a suitable representative $P$, let $$f: P \rightarrow S_3^8$$ be a map of degree 1 into the 8-dimensional $\mathbb{Z}_3$-sphere, and let $\Sigma_f$ denote the mapping cylinder of $f$, with $\Sigma_0=P$ and $\Sigma_1=S_3^8$. We now proceed analogously as in Example \ref{e:support1}: we now embed $\Sigma_f$ into some high-dimensional Euclidean space $\R^N$ so that $\Sigma_1$ is an embedded $\Z_3$-smooth submanifold and take a smooth regular neighborhood $U$. Take $\mathcal{M}$ the double manifold of $U$ along $\partial U$. Consider now $\tau \in H_8(\mathcal{M},\Z_3)$ the 8-dimensional homology class corresponding to $\Sigma_0$ (or, equivalently, to $\Sigma_1$) and note that $\tau$ is nontrivial in $\mathcal{M}$ (since $\mathcal{M}$ retract onto $U$, and hence onto $\Sigma_1$). Moreover, it admits a smooth embedded $\mathbb{Z}_3$-manifold representative, given by $\Sigma_1$. We may change the metric in $\mathcal{M}$ if necessary to be very small near $\Sigma_0$, and use the monotonicity formula for mass ratios to ensure that the support of any area-minimizing mod 3 integral cycle $T$ in $\tau$ must lie very close to $\Sigma_0$. Again, it is not possible for a $\Z_3$-submanifold representing $\tau$ to be contained in a small neighborhood of $\Sigma_0$, since this would contradict the fact that the homology class of $P$ is not representable as image of a $\Z_3$-manifold.
\end{example}

It is interesting to note -- and, ultimately, this observation goes back to Thom \cite[Chapitre III]{Thom54} -- that the thickening strategy does not give a contradiction for mod 2 cycles, since every mod 2 homology class of a CW-complex can be represented as the continuous image of a smooth manifold, see \cite[Théorème III.2]{Thom54}. Since there exist mod 2 homology classes of smooth manifolds with no embedded (or even immersed) reprentative, it is nevertheless expected that a similar argument could be carried out by exploiting obstructions to embeddability (or immersability) instead.

\end{document}